\newtheorem{Def}{Definition}[section]
\newtheorem{Th}[Def]{Theorem}
\newtheorem{Ex}[Def]{Example}
\newtheorem{Lemma}[Def]{Lemma}
\newtheorem{Prop}[Def]{Proposition}
\newtheorem{Cor}[Def]{Corollary}
\DeclareMathOperator{\Sf}{S_{fin}}
\DeclareMathOperator{\Gf}{G_{fin}}
\DeclareMathOperator{\S1}{S_1}
\DeclareMathOperator{\G1}{G_1}
\DeclareMathOperator{\Uf}{U_{fin}}
\DeclareMathOperator{\cov}{cov}
\DeclareMathOperator{\CDR}{CDR_{sub}}
\DeclareMathOperator{\bk}{\mathfrak{B}}
\DeclareMathOperator{\bb}{\mathfrak{b}}
\DeclareMathOperator{\df}{\mathfrak{d}}
\DeclareMathOperator{\mf}{\mathfrak{m}}
\DeclareMathOperator{\pf}{\mathfrak{p}}
\DeclareMathOperator{\rf}{\mathfrak{r}}
\DeclareMathOperator{\uc}{\mathcal{U}}
\DeclareMathOperator{\vc}{\mathcal{V}}
\DeclareMathOperator{\ob}{\mathcal{O}_{\mathfrak{B}}}
\DeclareMathOperator{\Gb}{\Gamma_{\mathfrak{B}}}
\DeclareMathOperator{\gb}{\gamma_{\mathfrak{B}}}
\DeclareMathOperator{\ogb}{\mathcal{O}_{g(\mathfrak{B})}}
\DeclareMathOperator{\Ggb}{\Gamma_{g(\mathfrak{B})}}
\DeclareMathOperator{\Split}{Split}
\newcommand{\bfGb}{\boldsymbol{\Gamma}_{\bk}}
\newcommand{\ac}{\mathcal{A}}
\newcommand{\bc}{\mathcal{B}}
\newcommand{\cc}{\mathcal{C}}
\newcommand{\dc}{\mathcal{D}}
\newcommand{\nb}{\mathbb{N}}
\newcommand{\oc}{\mathcal{O}}
\newcommand{\pc}{\mathcal{P}}
\newcommand{\qc}{\mathcal{Q}}
\newcommand{\wc}{\mathcal{W}}
\newcommand{\mc}{\mathcal{M}}
\newcommand{\zc}{\mathcal{Z}}
\newcommand{\fc}{\mathcal{F}}
\newcommand{\gc}{\mathcal{G}}
\newcommand{\ks}{\mathscr{K}}
\begin{document}
\title[Certain results on selection principles in topological spaces]{Certain results on selection principles associated with bornological structure in topological spaces}

\author[D. Chandra, S. Das  and N. Alam ]{ Debraj Chandra$^{\dag}$, Subhankar Das$^{\ddag}$ and Nur Alam$^*$ }
\address{\llap{$\dag$\,}Department of Mathematics, University of Gour Banga, Malda-732103, West Bengal, India}
\email{debrajchandra1986@gmail.com}

\address{\llap{$\ddag$\,}Department of Mathematics, Jadavpur University, Kolkata-700032, West Bengal, India}
\email{subhankarjumh70@gmail.com}

\address{\llap{*\,}Department of Mathematics, Directorate of Open and Distance Learning (DODL), University of Kalyani, Kalyani, Nadia-741235, West Bengal, India}
\email{nurrejwana@gmail.com}

\subjclass[2010]{Primary: 54D20; Secondary: 54C35, 54A25}

\maketitle

\begin{abstract}
We study selection principles related to bornological covers in a topological space $X$ following the work of Aurichi et al., 2019, where selection principles have been investigated in the function space $C_{\bk}(X)$ endowed with the topology $\tau_{\bk}$ of uniform convergence on bornology $\bk$. We show equivalences among certain selection principles and present some game theoretic observations involving bornological covers. We investigate selection principles on the product space $X^n$ equipped with the product bornolgy $\bk^n$, $n\in \omega$. Considering the cardinal invariants such as the unbounding number ($\bb$), dominating numbers ($\df$), pseudointersection numbers ($\pf$) etc., we establish connections between the cardinality of base of a bornology with certain selection principles. Finally, we investigate some variations of the tightness properties of $C_{\bk}(X)$ and present their characterizations in terms of selective bornological covering properties of $X$.

\end{abstract}

\section{Introduction}
We follow the notation and terminology of \cite{arhan92,mccoy88,Engelking,hh}. The main framework of our work is bornology. Recall that a \textit{bornology} $\bk$ on a space $X$ is a collection of subsets of $X$ which satisfies the following conditions: $(1)$ it is hereditary, i.e., for $B\in \bk$ and $B^\prime \subseteq B$, $B^\prime\in \mathfrak{B}$; $(2)$ it is closed under taking finite unions, i.e., for $B, B^\prime \in \mathfrak{B}$, $B\cup B^\prime \in \mathfrak{B}$, $(3)$ it is a cover of $X$ \cite{hh}. A \textit{base} for bornology $\mathfrak{B}$ on $X$ is a subfamily $\mathfrak{B}_0$ of $\mathfrak{B}$ which is cofinal in $\mathfrak{B}$ with respect to the inclusion, i.e., for each $B \in \mathfrak{B}$ there is $B_0 \in \mathfrak{B}_0$ such that $B\subseteq B_0$. $\bk_0$ is called closed (compact) if each members of $\bk_0$ are closed (compact) subsets of $X$. The family $\fc$ of all finite subsets of $X$, the family $\pc(X)$ of all subsets of $X$ and the family $\ks$ of nonempty subsets of $X$ with compact closure are few examples of bornologies on $X$. The bornology $\fc$ is the smallest while $\pc(X)$ is the largest bornology on $X$.

A systematic study of selection principles was initiated by Scheepers in his seminal papers \cite{coc1,coc2}. Since then a lot of researches have been carried out in this field of mathematics resulting a vast body of literatures. The theory of selection principles has connections with several disciplines of mathematics such as Set theory, Topological groups, General topology, Ramsey theory, Cardinal invariants etc. Following the pioneering work of Scheepers many researchers have explored various generalizations and weaker forms of certain notions (for examples, $\gamma$-cover, large cover, the Hurewicz property etc.) of selection principles by employing diverse frameworks such as ideals, bornologies, compact sets, bounded sets etc. Moreover, the study of selection principles in function spaces is one of the important line of investigation where several topological properties of the function space $C(X)$ have been characterized in terms of selective covering properties of $X$. Primarily, the function space endowed with the topology of pointwise convergence $(C(X),\tau_p)$( in short, $C_p(X)$) and the function space endowed with the compact open topology $(C(X),\tau_k)$ (in short, $C_k(X)$) are two prominent topological spaces where extensive investigation have been carried out resulting in nice characterizations of well known properties of selection principles (for examples, the Hurewicz, tightness, Fr\'{e}chet-Urysohn properties etc.). One can see survey papers \cite{sur1,sur2,sur3, sur4, sur5} and references therein for detailed information on selection principles.

As far as bornological investigation of selection principles is concerned, it has been started by Caserta, Di Maio, Ko\v{c}inac in \cite{caserta12} where selection principles have been studied in the function space $C(X)$ endowed with the topology of strong uniform convergence  on bornology \cite{bl}. Subsequently, in \cite{aurichi16, am} the authors taking a broader perpective studied selection principles in the function space $C_{\bk}(X)$ using the topology $\tau_{\bk}$ of uniform convergence on $\bk$. They have investigated certain dual games between a Tychonoff space $X$ and $C_{\bk}(X)$, certain selective local properties of $C_{\bk}(X)$ corresponding to some bornological covering properties of $X$ and also investigated $\gb$-spaces while assuming a bornology with a compact base throughout. This work has been further continued in \cite{Fernandez} with a focus on certain topological games with respect to class of dense subsets of $C_{\bk}(X)$. We intend to follow this line of investigation and present several results on selection principles on $X$ and on certain local properties of  $C_{\bk}(X)$.

The paper is arranged as follows. In section $2$, definitions of basic notions and properties of $X$ which are used subsequently are presented. In section $3$, we present equivalences of certain selection principles, some game theoretic observations and discuss the $\alpha_i$-properties of $X$ in relation to bornological covers. In section $4$, we focus on selection principles on the product space $X^n$ with respect to the product bornology $\bk^n$, $n\in \omega$, and investigate behaviour of some selection principles under a continuous image. We also establish connections between cardinalities of base $\bk_0$ of bornology $\bk$ on $X$ with certain selection principles on $X$. The cardinal invariants that we have considered are unbounding number ($\bb$), dominating numbers ($\df$), pseudointersection numbers ($\pf$),  $\cov(\mc)$ etc. Finally, section $5$ is focused on the function space $C_{\bk}(X)$ endowed with the topology $\tau_{\bk}$ of uniform convergence $\bk$. We show equivalences of various local properties of $C_{\bk}(X)$ in particular certain variations of the tightness property of $C_{\bk}(X)$ with selective bornological covering properties of $X$. We prove that the supertightness property and the tightness property of $C_{\bk}(X)$ coincide and also establish relation between the Lindel\"{o}f number of $C_{\bk}(X)$ with the supertightness of $X^n$, $n\in \omega$.

\section{Preliminaries}
Throughout $X$ stands for a topological space.  Let $\mathcal{A}$ and $\mathcal{B}$ be collections consisting of families of subsets of $X$.
\begin{enumerate}[leftmargin=2cm]
\item[$\S1(\mathcal{A},\mathcal{B})$:] For each sequence $\{\mathcal{U}_n:n\in \omega\}$ of elements of $\mathcal{A}$ there exists a sequence $\{V_n:n\in \omega\}$ such that for each $n$ $V_n\in\mathcal{U}_n$ and $\{V_n : n\in\omega\}\in\mathcal{B}$ \cite{coc1,coc2}.

\item[$\Sf(\mathcal{A},\mathcal{B})$:] For each sequence $\{\mathcal{U}_n:n\in \omega\}$ of elements of $\mathcal{A}$ there exists a sequence $\{\vc_n:n\in \omega\}$ such that for each $n$ $\mathcal{V}_n$ is a finite (possibly empty) subset of $\mathcal{U}_n$ and $\bigcup_{n\in\omega}\mathcal{V}_n\in\mathcal{B}$ \cite{coc1,coc2}.

\item[$\Uf(\mathcal{A},\mathcal{B})$:] For each sequence $\{\mathcal{U}_n:n\in \omega\}$ of elements of $\mathcal{A}$ there exists a sequence $\{\vc_n:n\in \omega\}$ such that for each $n$ $\mathcal{V}_n$ is a finite (possibly empty) subset of $\mathcal{U}_n$ and $\{\bigcup\mathcal{V}_n : n\in\omega\}\in\mathcal{B}$ or $\bigcup\mathcal{V}_n=X$ for some $n$ \cite{coc1,coc2}.

\item[$\bigcap_{\infty}(\ac,\bc)$:] For each sequence $\{\uc_n:n\in \omega\}$ of elements of $\ac$
there exists a sequence $\{\vc_n:n\in \omega\}$ such that $\vc_n$ is an infinite subset of $\uc_n$ for each $n\in \omega$ and $\{\cap \vc_n:n\in \nb\}\in \bc$ \cite{tb2}.

\item[$\CDR(\ac,\bc)$:] For each sequence $\{\uc_n:n\in \omega\}$ of elements of $\ac$ there exists a sequence $\{\vc_n:n\in \nb\}$ such that for each $n$, $\vc_n\subseteq \uc_n$, for $m\neq n$, $\vc_m\cap \vc_n=\emptyset$ and each $\vc_n$ is a member of $\bc$\cite{coc1}.

\item[$\Split(\ac, \bc)$:] For each element $\uc$ of $\ac$ there exist two elements $\vc_1$ and $\vc_2$ of $\bc$ such that $\vc_1\cap \vc_2=\emptyset$ and $\vc_1,\vc_2\subseteq \uc$ \cite{coc1}.

\end{enumerate}

Corresponding to the $\S1(\ac,\bc)$ and $\Sf(\ac,\bc)$ selection principles the associated infinitely long games are given below.
\begin{enumerate}[leftmargin=2cm]
\item[$\G1(\ac,\bc)$:] Two players, ONE and TWO, play a round for each $n\in \omega$. In the $n$-th round ONE chooses a set $\uc_n$ from $\ac$ and TWO responds by choosing an element $V_n\in  \uc_n$. TWO wins the play $\{\uc_1,V_1, \dotsc, \uc_n, V_n, \dotsc \}$ if $\{V_n :n\in \omega\}\in \bc$. Otherwise ONE wins \cite{coc1,coc2}.

\item[$\Gf(\ac,\bc)$:] Two players, ONE and TWO, play a round for each $n\in \omega$. In the $n$-th round ONE chooses a set $\uc_n$ from $\ac$ and TWO responds by choosing a finite (possibly empty) set $\vc_n\subseteq \uc_n$. TWO wins the play $\{\uc_1,\vc_1, \dotsc, \uc_n,\vc_n, \dotsc  \}$ if $\bigcup_{n\in \omega}\vc_n\in \bc$. Otherwise ONE wins \cite{coc1,coc2}.
\end{enumerate}

In \cite{alpha} the $\alpha_i$ properties are defined as follows. The symbol $\alpha_i(\ac,\bc)$ for $i=1,2,3,4$ denotes that for each sequence $\{\uc_n:n\in \omega\}$ of elements of $\ac$, there exists $\vc\in \bc$ such that
\begin{enumerate}[leftmargin=2cm]
\item[$\alpha_1(\ac,\bc)$:] For each $n\in \omega$, the set $\uc_n\setminus \vc$ is finite.

\item[$\alpha_2(\ac,\bc)$:] For each $n\in \omega$, the set $\uc_n\cap \vc$ is infinite.

\item[$\alpha_3(\ac,\bc)$:] For infinitely many $n\in\omega$, the set $\uc_n\cap \vc$ is infinite.

\item[$\alpha_4(\ac,\bc)$:] For infinitely many $n\in \omega$, the set $\uc_n\cap \vc$ is nonempty.
\end{enumerate}

We now recall some cardinal invariants which will be used in our investigation (see \cite{Vaughan} for more details). A natural pre-order $\leq^*$ on the Baire space $\omega^\omega$ is defined by $f\leq^*g$ if and only if $f(n)\leq g(n)$ for all but finitely many $n$. A subset $A$ of $\omega^\omega$ is said to be bounded if there is a $g\in\omega^\omega$ such that $f\leq^*g$ for all $f\in A$. Let  $\mathfrak{b}$ denote the smallest cardinality of an unbounded subset of $\omega^\omega$. A subset $D$ of $\omega^\omega$ is dominating if for each $g\in\omega^\omega$ there exists a $f\in D$ such that $g\leq^* f$. Let $\mathfrak{d}$ denote the minimum cardinality of a dominating subset of $\omega^\omega$ and $\mathfrak{c}$ be the cardinality of the set of reals. The value of $\mathfrak{d}$ does not change if one considers the relation `$\leq$' instead of `$\leq^*$' \cite{KKJE}. Suppose that $\cc$ is a family of infinite subsets of $\omega$. By $P(\cc)$ we denote that there exists a subset $P$ of $\omega$ for which for each $C\in \cc$, the set $P\setminus C$ is finite. Let $\pf$ denote the smallest cardinal number $k$ such that the following statement is false: For each family $\cc$ if any finite subfamily of $\cc$ has infinite intersection and $|\cc|\leq k$, then $P(\cc)$ holds. Let $\cov(\mc)$  denote the smallest cardinal number $k$ such that a family of $k$ first category subsets of the real line covers the real line. Let $\cc$ be a family of subsets of $\omega^\omega$ with $|\cc|<\cov(\mc)$. Then there is an $h\in \omega^\omega$ for which  the set $\{n\in \omega:f(n)=h(n)\}$ is infinite for every $f\in \cc$ (see \cite[Theorem 5]{fm}).
The relations among these cardinal numbers are as follows. $\pf\leq \bb\leq \df$, $\pf\leq \cov(\mc)$.
Let $[\omega]^\omega$ denote the collection of all infinite subsets of $\omega$. A set $R\in[\omega]^\omega$ reaps a family $\mathcal{A} \subseteq [\omega]^\omega$ if for each $P\in\mathcal{A}$, both the sets $P\cap R$ and $P\setminus R$ are infinite. Let $\mathfrak{r}$ be the minimum cardinality of a family $\mathcal{A}\subseteq[\omega]^\omega$ that no infinite subset $R$ of $\omega$ reaps.

Let $\cc$ be an infinite family of subsets of $X$. $\cc$ \textit{converges to} $x$ if for each open set $U$ containing $x$, the set $\{C\in \cc:C\nsubseteq U\}$ is finite.
Let $x\in X$. By $\Omega_x$ we denote the family $\{C\subseteq X:x\in \overline{C}\setminus C\}$ \cite{cocVII}, and by $\Sigma_x$ we denote the family of all sequences converging to $x\in X$ \cite{kock2}. The symbol $t(X)$ denotes the \textit{tightness } of $X$ which is the smallest infinite cardinal $\mf$ such that when $C\subseteq X$ and $x\in \overline{C}$, there exists $D\subseteq C$ with $x\in \overline{D}$ and $|D|\leq \mf$ \cite{arh76, j}. If $t(X)=\omega$, then $X$ has \textit{countable tightness}.  $X$ has \textit{countable fan tightness} (\textit{countable strong fan tightness}) at $x$ if $X$ satisfies $\Sf(\Omega_x, \Omega_x)$ ($\S1(\Omega_x, \Omega_x)$) \cite{arhan92, sakai88}. Let $\cc$ be a family of nonempty subsets of $X$. $\cc$ is a \textit{$\pi$-network} at $x\in X$ if each neighbourhood of $x$ contains some member of $\cc$. $X$ has \textit{countable fan tightness for finite sets}  if for each $x$ in $X$ and for each sequence $\{\cc_n:n\in \omega\}$ of $\pi$-networks at $x$ which consist of finite subsets of $X$, there exists a finite subset $\dc_n$ of $\cc_n$ for each $n$ for which $\bigcup_{n\in \omega}\dc_n$ is a $\pi$-network at $x$ \cite{sakai2}. $X$ has \textit{countable strong fan tightness for finite sets} if $x$ in $X$ and for each sequence $\{\cc_n:n\in \omega\}$ of $\pi$-networks at $x$ which consist of finite subsets of $X$, there exists an $C_n\in \cc_n$ for each $n$  for which $\{C_n:n\in \omega\}$ is a $\pi$-network at $x$ \cite{sakai2}. The \textit{supertightness} $st(x,X)$ of $x$, $x\in X$, is the smallest cardinal $\mf$ for which each $\pi$-network $\cc$ at $x$ consisting of finite subsets of $X$ has a subcollection $\dc$ with $|\dc|\leq \mf$ which is a $\pi$-network at $x$. The supertightness of $X$ is denoted by $st(X)$ and  is defined by $st(X)=\omega\cdot \sup\{st(x,X):x\in X\}$ \cite{mm} (see also \cite{sakai-st}). If $st(X)=\omega$, then $X$ has \textit{countable supertightness}. $X$ is \textit{Fr\'{e}chet–Urysohn} if for each  subset $C$ of $X$ and each $x\in \overline{C}$, there exists a sequence in $C$ which converges to $x$ \cite{gn}. $X$ is \textit{strictly Fr\'{e}chet-Urysohn} if $X$ satisfies $\S1(\Omega_x, \Sigma_x)$ for each $x\in X$ \cite{gn}. $X$ is \textit{Fr\'{e}chet-Urysohn for finite sets} if for each $x\in X$ and each $\pi$-network $\cc$ at $x$ which consist of finite subsets of $X$, $\cc$ contains a subfamily which converges to $x$ \cite{gs, rs}. Observe that the "Fr\'{e}chet-Urysohn for finite sets" implies the "Fr\'{e}chet–Urysohn". An equivalent formulation of the property Fr\'{e}chet-Urysohn for finite sets is that for each $x$ in $X$ and each sequence $\{\cc_n:n\in \omega\}$ of $\pi$-networks at $x$ which consist of finite subsets of $X$ there exists $C_n\in \cc_n$ for each $n\in \omega$ such that $\{C_n:n\in \omega\}$ converges to $x$ (see \cite[Proposition 6]{rs}).

We now recall various classes of open covers of $X$. Let $\mathcal O$ denote the collection of all open covers of $X$.
An open cover $\mathcal{U}$ of $X$ is an \textit{$\omega$-cover} if $X\notin\mathcal{U}$ and for each finite subset $F$ of $X$ there exists a $U\in\mathcal{U}$ such that $F\subseteq U$ \cite{coc1,coc2}. The symbol $\Omega$ denotes the collection of all $\omega$-covers of $X$. An open cover $\mathcal{U}$ of $X$ is a \textit{$\gamma$-cover} if $\mathcal{U}$ is infinite and for each $x\in X$ the set $\{U \in \mathcal{U} : x\notin U\}$ is finite \cite{coc2,coc1}. The symbol $\Gamma$ denotes the collection of all $\gamma$-covers of $X$. The \textit{Lindel\"{o}f number} $l(X)$ of $X$ is the smallest infinite cardinal $\mf$ such that each open cover of $X$ has a subcover with cardinality less than or equal to $\mf$ \cite{arh76, j}. Let $k\in \omega$. A cover $\mathcal{U}$ of $X$ is $k$-cover \cite{tsaban24} if $X\notin \mathcal{U}$ and for each $k$-element subset $F$ of $X$ there exists $U\in\mathcal{U}$ such that $F\subseteq U$. Observe that a cover is an $\omega$-cover if and only if it is an open $k$-cover for each $k\in \omega$. By $\mathcal{O}_k$ we denote the collection of all open $k$-covers of $X$. Let $\bk$ be a bornology on $X$ with a closed base. A collection $\mathcal{U}$ of subsets of $X$ is \textit{$\mathfrak{B}$-cover} for $X$ if $X\notin \mathcal{U}$ and for every $B\in \mathfrak{B}$ there exists $U \in \mathcal{U}$ such that $B \subseteq U$. The symbol $\mathcal{O}_\mathfrak{B}$ denotes the collection of all open $\mathfrak{B}$-covers for $X$ \cite{am}. Any cofinite subset of a $\mathfrak{B}$-cover is also a $\mathfrak{B}$-cover of $X$. We call a cover $\uc=\{U_n:n\in \omega\}$ of $X$ a $\bk$-sequence if it is infinite and for each $B\in \bk$ there exists $n_0\in \omega$ such that $B\subseteq U_n$ for all $n\geq n_0$. Let $\bfGb$ denote the collection of all such covers of $X$. If the members of $\uc$ are proper open subsets of $X$, then $\uc$ is a $\gb$-cover of $X$ \cite{am}. The collection of all $\gb$-covers of $X$ is denoted by $\Gb$. The following relations among the classes of covers hold. $\ob\subseteq \Omega\subseteq \oc$, $\Gb\subseteq \ob$ and $\Gb\subseteq \Gamma$. $X$ is a \textit{$\bk$-Lindel\"{o}f space} if each open $\mathfrak{B}$-cover of $X$ contains a countable $\bk$-subcover.

For any two spaces $X$ and $Y$, $Y^X$ ($C(X, Y)$) stands for the set of all functions (continuous functions) from $X$ to $Y$. $C(X)$ denotes the set of all continuous real-valued functions on $X$. The commonly used topologies on $C(X, Y)$ are the compact-open topology $\tau_k$, and the topology of pointwise convergence $\tau_p$. The corresponding spaces are, in general, denoted by $(C(X, Y), \tau_k)$ ($C_k(X)$ when $Y = \mathbb{R}$), and $(C(X, Y), \tau_p)$ ($C_p(X)$ when $Y = \mathbb{R}$).

The topology of uniform convergence on $\mathfrak{B}$, denoted by $\tau_\mathfrak{B}$, is the topology on $C(X)$ that has a neighborhood base at each $f\in C(X)$ the sets of the form
\[\langle B, \epsilon \rangle[f] = \{g\in C(X) : \forall x\in B (|f(x) - g(x)|< \epsilon)\},\] for $B\in \mathfrak{B}$ and $\epsilon > 0$. By $C_\mathfrak{B}(X)$ we mean the space $(C(X), \tau_\mathfrak{B})$. It can be shown that $\tau_\mathfrak{B}$ is obtained from a separating uniformity over $C(X)$, from which it follows that $C_\mathfrak{B}(X)$ is a Tychonoff space (see \cite{mccoy88}). It is also worthwhile to mention that $C_\mathfrak{B}(X)$ is a homogeneous space, so there is no loss of generality in fixing an appropriate point from $C_\mathfrak{B}(X)$ in order to analyze its closure properties - in this case, we fix the zero function $\underline{0} : X \to \mathbb{R}$. Observe that if $\bk=\fc$, then $C_\fc(X) = C_p(X)$ and the $\fc$-covers turns out to be the $\omega$-covers of $X$. Also, if $\bk=\ks$, then it follows that $C_\ks(X) = C_k(X)$. One readily sees that $\mathcal{O}_\ks = \mathcal{K}$, where $\mathcal{K}$ denotes the collection of the so called $k$-covers of $X$.

Let $g:X\rightarrow Y$ be a function and $\bk$ be a bornology on $X$.  The family $g(\bk)=\{g(B):B\in \bk\}$ is a bornology on $g(X)$ and whenever $g$ is surjective $g(\bk)$ is a bornology on $Y$. When $g:X\rightarrow Y$ is a continuous function and $\bk$ is a bornology on $X$ with a compact base $\bk_0$, then $g(\bk)$ is a bornology on $g(X)$ with compact base $g(\bk_0)$. For $n\in \omega$ by $\bk^n$ we denote the bornology on $X^n$ generated by the set $\{B^n:B\in \bk\}$ \cite{hh}. Throughout we follow the convention that if $\bk$ is a bornology on $X$, then $X\not\in \bk$.

\section{Certain observations on selection principles and bornological covers}
\subsection{Implications among certain selection principles}
We first present some basic observations on bornological covers which we will use frequently.
\begin{Lemma}
\label{lemma1}
Let $\bk$ be a bornology on a topological space $X$ with a closed base. The following assertions are equivalent.
\begin{enumerate}[wide=0pt, label={\upshape(\arabic*)}, leftmargin=*]
  \item An infinite collection $\mathcal{U}$ of open subsets of $X$ belongs to $\mathcal{O}_\mathfrak{B}$.
  \item For each $B\in \mathfrak{B}$ the set $\{U\in \mathcal{U} : B\subseteq U\}$ is infinite.
\end{enumerate}
\end{Lemma}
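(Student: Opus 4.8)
The plan is to prove the two implications separately, with essentially all of the work lying in $(1)\Rightarrow(2)$. For $(2)\Rightarrow(1)$ I would simply observe that if, for every $B\in\mathfrak{B}$, infinitely many members of $\mathcal{U}$ contain $B$, then in particular at least one does; this is exactly the covering requirement in the definition of a $\mathfrak{B}$-cover, and together with the condition $X\notin\mathcal{U}$ built into the notion of $\mathcal{O}_\mathfrak{B}$ it gives $\mathcal{U}\in\mathcal{O}_\mathfrak{B}$. So this direction is immediate.

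The substantive direction is $(1)\Rightarrow(2)$, and it rests on two preliminary remarks. First, every finite subset of $X$ lies in $\mathfrak{B}$: since $\mathfrak{B}$ covers $X$ and is hereditary, each singleton belongs to $\mathfrak{B}$, and closure under finite unions then yields all finite sets. Second, because $\mathcal{U}\in\mathcal{O}_\mathfrak{B}$ we have $X\notin\mathcal{U}$, so every $U\in\mathcal{U}$ omits at least one point of $X$.

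The core of $(1)\Rightarrow(2)$ is then a witness-point argument. Fixing $B\in\mathfrak{B}$, I would argue by contradiction, assuming $\{U\in\mathcal{U}:B\subseteq U\}$ is finite, say equal to $\{U_1,\dots,U_k\}$. For each $i$ I would choose a witness $x_i\in X\setminus U_i$ (possible since $U_i\neq X$) and set $F=\{x_1,\dots,x_k\}$, so that $B\cup F\in\mathfrak{B}$ by the first remark. The $\mathfrak{B}$-cover property of $\mathcal{U}$ applied to $B\cup F$ then produces some $U\in\mathcal{U}$ with $B\cup F\subseteq U$; since $B\subseteq U$ forces $U=U_j$ for some $j$, the membership $x_j\in F\subseteq U_j$ contradicts the choice $x_j\notin U_j$. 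This contradiction shows $\{U\in\mathcal{U}:B\subseteq U\}$ must be infinite, as required.

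I expect the only real obstacle to be spotting the witness-point construction: the whole argument hinges on exploiting $X\notin\mathcal{U}$ to manufacture finitely many points that separate $B$ from each of the supposedly finitely many containing sets, together with the fact that adjoining a finite set to $B$ keeps it inside the bornology. It is worth noting that the closed-base hypothesis is not actually needed for this particular equivalence (it is a standing assumption of the section), since the proof uses only that $\mathfrak{B}$ is a hereditary cover of $X$ closed under finite unions.
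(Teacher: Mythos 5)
Your proof is correct, and it follows the same basic strategy as the paper's: argue $(1)\Rightarrow(2)$ by contradiction, exploit $X\notin\mathcal{U}$ to find points omitted by the supposedly finitely many containing sets, and use the fact that adjoining a finite set to $B$ keeps it in $\mathfrak{B}$. There is, however, one noteworthy difference, and it is in your favour. The paper picks a \emph{single} point $x\in X$ with $x\notin U_i$ for all $i$, and then applies the cover property to $B\cup\{x\}$; but such a single point need not exist, since finitely many proper open sets can perfectly well cover $X$ (e.g., two proper open sets with union $X$), so the paper's proof as written has a small gap. Your version repairs this: you choose one witness $x_i\in X\setminus U_i$ \emph{per set} (each exists because each $U_i\neq X$), form $F=\{x_1,\dots,x_k\}$, and apply the $\mathfrak{B}$-cover property to $B\cup F$, so that any $U\supseteq B\cup F$ must equal some $U_j$ while containing $x_j$ --- a genuine contradiction in all cases. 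Your remark that the closed-base hypothesis is never used (only that $\mathfrak{B}$ is a hereditary cover closed under finite unions) is also correct; it is a standing assumption of the section rather than an ingredient of this lemma. The converse direction you supply is the trivial one the paper omits, and your treatment of it is fine, granted the tacit convention that $X\notin\mathcal{U}$ for the collections under consideration.
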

\begin{proof}
We prove only $(1) \Rightarrow (2)$. Let $B \in \mathfrak{B}$. If possible suppose that the set $\{U\in \mathcal{U} : B\subseteq U\}$ is finite. Choose $\{U\in \mathcal{U} : B\subseteq U\} = \{U_i : i\in n\}$. Pick a $x \in X$ such that $x \notin U_i$ for all $i \in n$. Since $B\cup \{x\} \in \mathfrak{B}$, there exists a $U \in \mathcal{U}$ such that $B\cup \{x\} \subseteq U$. Which is absurd. Thus $(2)$ holds.
\end{proof}

\begin{Lemma}
\label{lemma3}
Let $\bk$ be a bornology on a topological space $X$ with a closed base. If $\{\mathcal{U}_n:n\in \omega\}$ is a sequence of $\mathfrak{B}$-covers of $X$ such that for each $n$ $\mathcal{U}_n = \{U_m^n : m\in \omega\}$, then for each $n$ $\mathcal{V}_n = \{\cap_{i\in n+1} U_{m_i}^i : m_i\in \omega\}$ is a $\mathfrak{B}$-cover of $X$.
\end{Lemma}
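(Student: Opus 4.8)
The plan is to verify directly, for each fixed $n\in\omega$, the two defining conditions of a $\mathfrak{B}$-cover: that $X\notin\mathcal{V}_n$, and that every $B\in\mathfrak{B}$ is contained in some member of $\mathcal{V}_n$. A typical member of $\mathcal{V}_n$ has the form $V=\bigcap_{i\in n+1}U^i_{m_i}$, a \emph{finite} intersection that selects one set from each of the covers $\mathcal{U}_0,\dotsc,\mathcal{U}_n$ (reading $n+1=\{0,1,\dotsc,n\}$). Because only finitely many covers are involved, both conditions reduce to elementary manipulations of finite intersections, and no limiting or union-closure argument on $\mathfrak{B}$ is needed.

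First I would establish the covering condition. Fix $B\in\mathfrak{B}$. Since each $\mathcal{U}_i$ with $i\in n+1$ is a $\mathfrak{B}$-cover and $\mathcal{U}_i=\{U^i_m:m\in\omega\}$, for every such $i$ there is an index $m_i\in\omega$ with $B\subseteq U^i_{m_i}$. These finitely many choices can be made independently, so setting $V=\bigcap_{i\in n+1}U^i_{m_i}$ yields $B\subseteq V$, and $V\in\mathcal{V}_n$ by the very definition of $\mathcal{V}_n$. Hence every bornological set is absorbed by a member of $\mathcal{V}_n$.

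Next I would check $X\notin\mathcal{V}_n$. Any $V\in\mathcal{V}_n$ contains the factor $U^0_{m_0}$; since $\mathcal{U}_0$ is a $\mathfrak{B}$-cover we have $X\notin\mathcal{U}_0$, so $U^0_{m_0}\subsetneq X$, whence $V\subseteq U^0_{m_0}\subsetneq X$ and in particular $V\neq X$. If one additionally wishes to record that $\mathcal{V}_n$ is an \emph{open} $\mathfrak{B}$-cover (so as to place it in $\mathcal{O}_\mathfrak{B}$ and invoke Lemma~\ref{lemma1} in later applications), this is immediate, as a finite intersection of open sets is open. Combining the two verifications gives that $\mathcal{V}_n$ is a $\mathfrak{B}$-cover of $X$.

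I do not anticipate a genuine obstacle: the statement is a routine verification whose only mild point of care is the bookkeeping of indices, namely recognizing that for a fixed $B$ the finitely many selections $m_0,\dotsc,m_n$ are made separately in the finitely many covers $\mathcal{U}_0,\dotsc,\mathcal{U}_n$, and that it is precisely the finiteness of the intersection that preserves both the containment $B\subseteq V$ and the properness $V\subsetneq X$.
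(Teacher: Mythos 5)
Your proof is correct and follows essentially the same approach as the paper: fix $B\in\mathfrak{B}$, select one index $m_i$ per cover $\mathcal{U}_i$ with $B\subseteq U^i_{m_i}$, and observe that the finite intersection lies in $\mathcal{V}_n$ and contains $B$. Your additional verification that $X\notin\mathcal{V}_n$ (via $V\subseteq U^0_{m_0}\subsetneq X$) is a point the paper's proof silently omits, so including it is a small improvement rather than a deviation.
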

\begin{proof}
Let $B\in \mathfrak{B}$. Then for each $i\in n+1$ we get a $U_{m_i}^i \in \mathcal{U}_i$ such that $B \subseteq U_{m_i}^i$. It follows that $B \subseteq \cap_{i\in n+1} U_{m_i}^i$. Since $\cap_{i\in n+1} U_{m_i}^i \in \mathcal{V}_n$, $\mathcal{V}_n$ is a $\mathfrak{B}$-cover of $X$.
\end{proof}

\begin{Lemma}
\label{lemma2}
Let $\bk$ be a bornology on a topological space $X$ with a closed base. The following assertions hold.
\begin{enumerate}[wide=0pt, label={\upshape(\arabic*)}, leftmargin=*, ref={\theLemma(\arabic*)}]
  \item\label{lemma201} Every infinite subset of a $\gamma_\mathfrak{B}$-cover of $X$ is a $\gamma_\mathfrak{B}$-cover of $X$.
  \item\label{lemma202} If $\{\mathcal{U}_n:n\in \omega\}$ is a sequence of $\gamma_\mathfrak{B}$-covers of $X$ such that for each $n$ $\mathcal{U}_n = \{U_m^n : m\in \omega\}$, then for each $n$ $\mathcal{V}_n = \{\cap_{i\in n+1} U_m^i : m\in \omega\}$ is a $\gamma_\mathfrak{B}$-cover of $X$.
\end{enumerate}
\end{Lemma}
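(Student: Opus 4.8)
The plan is to first unwind the definition: a $\gb$-cover is an infinite family $\uc$ of proper open subsets of $X$ that forms a $\bk$-sequence, which I read in the equivalent form that for each $B\in \bk$ all but finitely many members of $\uc$ contain $B$, i.e.\ $\{U\in \uc : B\not\subseteq U\}$ is finite. I also record the standing fact that $\fc\subseteq \bk$: since $\bk$ covers $X$ and is hereditary, every singleton lies in $\bk$, and closure under finite unions then places every finite subset of $X$ in $\bk$. This is the only structural feature of $\bk$ I expect to use, so the closed-base hypothesis will play no role in this lemma.

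For \ref{lemma201}, let $\uc$ be a $\gb$-cover and $\vc\subseteq \uc$ an infinite subfamily. Each member of $\vc$ is a proper open set because it is a member of $\uc$, and $\vc$ is infinite by assumption, so only the $\bk$-sequence condition needs checking. For each $B\in \bk$ we have $\{U\in \vc : B\not\subseteq U\}\subseteq \{U\in \uc : B\not\subseteq U\}$, and the right-hand set is finite, hence so is the left-hand one; thus $\vc\in \Gb$. The covering property comes for free, since taking $B=\{x\}\in \bk$ shows all but finitely many members of the infinite family $\vc$ contain $x$.

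For \ref{lemma202}, fix $n$ and abbreviate $W_m=\bigcap_{i\in n+1}U_m^i$, so that $\vc_n=\{W_m : m\in \omega\}$. Each $W_m$ is open as a finite intersection of open sets, and proper since $W_m\subseteq U_m^0\subsetneq X$. For the $\bk$-sequence property, fix $B\in \bk$; for each $i\in n+1$ the $\gb$-cover $\uc_i$ supplies an index $m_i$ with $B\subseteq U_m^i$ whenever $m\geq m_i$, and then $B\subseteq W_m$ for every $m\geq \max\{m_0,\dots,m_n\}$. This is precisely where the diagonal choice of index (the same $m$ across all $i$) is essential; it is the reason $\vc_n$ remains a countable family rather than the larger array appearing in Lemma \ref{lemma3}.

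The one point that needs a genuine, if short, argument is that $\vc_n$ is infinite, and this is the step I expect to be the main obstacle. The plan is a contradiction argument: if $\{W_m : m\in \omega\}$ had only finitely many distinct values $W^{(1)},\dots,W^{(k)}$, each necessarily proper, I would pick $x_j\in X\setminus W^{(j)}$ for every $j$ and set $F=\{x_1,\dots,x_k\}\in \fc\subseteq \bk$. The $\bk$-sequence property just established yields some $M$ with $F\subseteq W_m$ for all $m\geq M$; but any such $W_m$ equals some $W^{(j)}$, whence $x_j\in F\subseteq W^{(j)}$ contradicts the choice of $x_j$. Hence $\vc_n$ is infinite, and together with the previous paragraph this gives $\vc_n\in \Gb$, completing the proof.
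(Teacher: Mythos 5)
Your proof is correct and follows essentially the same route as the paper's: for (1) the exceptional set $\{U\in \vc : B\nsubseteq U\}$ is contained in the finite set $\{U\in \uc : B\nsubseteq U\}$, and for (2) the same take-the-maximum-of-the-indices argument gives the eventual containment $B\subseteq \cap_{i\in n+1}U^i_m$. The only difference is that you additionally verify openness, properness, and (via the finite-set-in-$\bk$ contradiction argument) infiniteness of $\vc_n$ --- requirements in the paper's definition of a $\gamma_{\bk}$-cover that its own proof passes over silently --- so this is the same method carried out with more care, not a different one.
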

\begin{proof}
$(1)$. Let $\mathcal{U}$ be a $\gamma_\mathfrak{B}$-cover of $X$. Choose an infinite subset $\mathcal{V}$ of $\mathcal{U}$. It is sufficient to show that $\mathcal{V} \in \mathcal{O}_\mathfrak{B}$. Let $B \in \mathfrak{B}$. Since $\{U\in \mathcal{U} : B\nsubseteq U\}$ is finite and $\mathcal{V}$ is infinite, there exists a $V \in \mathcal{V}$ such that $B \subseteq V$. Thus $\mathcal{V} \in \mathcal{O}_\mathfrak{B}$.

$(2)$. Let $B \in \mathfrak{B}$. Then for each $i\in n+1$ there exists $m_i \in \omega$ such that $B \subseteq U_{m_i}^i$ for all $m \geq m_i$. Choose $k = \max \{m_i : i\in n+1\}$. Clearly for each $i\in n+1$, $B \subseteq U_k^i$ for all $m \geq k$. Thus $B \subseteq \cap_{i\in n+1} U_k^i$ with $\cap_{i\in n+1} U_k^i \in \mathcal{V}_n$. Hence $\mathcal{V}_n$ is a $\gamma_\mathfrak{B}$-cover of $X$.
\end{proof}

\begin{Lemma}
\label{lemma4}
Let $\bk$ be a bornology on a topological space $X$ with a closed base. If $\mathcal{U} = \{U_n : n\in \omega\}$ is an open $\mathfrak{B}$-cover of $X$, then $\mathcal{V} = \{\cup_{i\in n+1} U_i : n\in \omega\}$ is a $\gamma_\mathfrak{B}$-cover of $X$.
\end{Lemma}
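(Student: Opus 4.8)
The plan is to verify directly that the family $\mathcal{V} = \{W_n : n\in\omega\}$, where $W_n := \bigcup_{i\in n+1} U_i = U_0\cup\cdots\cup U_n$, satisfies each defining requirement of a $\gamma_\mathfrak{B}$-cover: its members are proper open subsets of $X$, it is infinite, and for every $B\in\mathfrak{B}$ one has $B\subseteq W_n$ for all sufficiently large $n$. Two structural observations organize everything. First, each $W_n$ is a finite union of open sets, hence open. Second, the sequence $(W_n)_n$ is non-decreasing, $W_0\subseteq W_1\subseteq\cdots$, and $\bigcup_{n} W_n = \bigcup_{n} U_n$. Since $\mathfrak{B}$ is a cover of $X$ and is hereditary, every singleton lies in $\mathfrak{B}$, so the $\mathfrak{B}$-cover $\mathcal{U}$ already covers $X$; consequently $\bigcup_n W_n = X$, and $\mathcal{V}$ is a cover of $X$.

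The heart of the proof is the absorbing property, and this is the easy part. Fix $B\in\mathfrak{B}$. Since $\mathcal{U}\in\mathcal{O}_\mathfrak{B}$ is a $\mathfrak{B}$-cover, there is an index $n_0$ with $B\subseteq U_{n_0}$. By monotonicity, $B\subseteq U_{n_0}\subseteq W_n$ for every $n\geq n_0$, which is exactly the $\mathfrak{B}$-sequence condition required of a $\gamma_\mathfrak{B}$-cover. This single line is the entire content of the eventual-containment clause, and it mirrors the use of the $\mathfrak{B}$-cover property in the preceding lemmas.

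The delicate point, which I expect to be the only real obstacle, is confirming that the members of $\mathcal{V}$ are \emph{proper} and that $\mathcal{V}$ is \emph{infinite}, since both can fail for a pathological enumeration. I would dispose of this by a short dichotomy on the non-decreasing sequence $(W_n)_n$, using $\bigcup_n W_n = X$. If $W_n\neq X$ for every $n$, then the sequence cannot stabilize (a stable value would equal $\bigcup_n W_n = X$), so it assumes infinitely many distinct proper open values; combined with the absorbing property, this makes $\mathcal{V}$ a genuine $\gamma_\mathfrak{B}$-cover. The remaining possibility, $W_N = X$ for some $N$, is precisely the degenerate situation in which an initial segment of $\mathcal{U}$ already covers $X$; there $\mathcal{V}$ collapses to a finite family containing $X$. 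I expect this degenerate case to be excluded in the intended reading (for instance by tacitly restricting to $\mathfrak{B}$-covers admitting no finite subcover), and I would either assume this or flag it explicitly. Apart from it, the conclusion follows immediately from the $\mathfrak{B}$-cover property of $\mathcal{U}$ together with the monotonicity of the cumulative unions.
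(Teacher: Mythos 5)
Your proof is correct and its core is exactly the paper's argument: given $B\in\mathfrak{B}$, pick $k$ with $B\subseteq U_k$, and monotonicity of the cumulative unions gives $B\subseteq\bigcup_{i\in n+1}U_i$ for all $n\geq k$. You are in fact more careful than the paper, whose proof checks only this eventual-containment clause and silently passes over the properness and infinitude requirements you flag; your dichotomy (either no $W_n$ equals $X$, in which case the non-stabilizing non-decreasing sequence yields an infinite family of proper open sets, or some finite initial segment of $\mathcal{U}$ already covers $X$, a degenerate case the statement tacitly excludes) is a legitimate refinement, not a different approach.
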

\begin{proof}
Let $B \in \mathfrak{B}$. Then there exists a $k \in \omega$ such that $B \subseteq U_k$. Since $U_k \subseteq \cup_{i\in n+1} U_i$ for all $n \geq k$, $B \subseteq \cup_{i\in n+1} U_i$ for all $n \geq k$. Thus $\mathcal{V}$ is a $\gamma_\mathfrak{B}$-cover of $X$.
\end{proof}

We introduce the following.
\begin{Def}
Let $\bk$ be a bornology on a topological space $X$ with a closed base. An open cover $\uc$ of $X$ is called $\bk$-groupable if $\uc=\cup_{n\in \omega}\uc_n$, where $\uc_n$'s are pairwise disjoint finite subsets of $\uc$, and for any $B\in \bk$ there exists $n_0$ for which $B\subseteq U$ for some $U\in \uc_n$ for all $n\geq n_0$.
\end{Def}
We denote the collection of all open $\bk$-groupable covers by $\mathcal{O}^{gp}_{\bk}$.

\begin{Ex}
Let $X=\omega^\omega$ and let $\bk$ be a bornology on $X$ with a compact base. Let $\uc=\{U_k:k\in \omega\}$, where $U_k=\{f\in \omega^\omega:f(1)\leq k\}$. Clearly, $U_k\subseteq U_{k+1}$. It is easy to see that $\uc$ is an open $\bk$-cover of $X$. Choose a sequence $n_1<n_2<\cdots$. Take $\uc_1=\{U_k:k\leq n_1\}$ and $\uc_m=\{U_k:n_{m-1}<k\leq n_m\}$, $m>1$. $\uc_m$'s are pairwise disjoint and finite. Also for a $B\in \bk$ we can find $m_0$ such that $B\subseteq U_k$ for $U_k\in \uc_m$ for all $m\geq m_0$. Hence $\uc$ is $\bk$-groupable.
\end{Ex}

\begin{Th}
\label{thm1}
Let $\bk$ be a bornology on a topological space $X$ with a closed base. The following assertions hold.
\begin{enumerate}[wide=0pt, label={\upshape(\arabic*)}, leftmargin=*, ref={\theTh(\arabic*)}]
  \item\label{thm101} $\S1(\Gamma_\mathfrak{B}, \Gamma_\mathfrak{B}) = \Sf(\Gamma_\mathfrak{B}, \Gamma_\mathfrak{B})$.

  \item\label{thm102} $\S1(\Gamma_\mathfrak{B}, \Gamma) = \Sf(\Gamma_\mathfrak{B}, \Gamma)$.

  \item\label{thm103} $\S1(\Gamma, \Gamma_\mathfrak{B}) = \Sf(\Gamma, \Gamma_\mathfrak{B})$.
\end{enumerate}
\end{Th}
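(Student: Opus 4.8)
The inclusion $\S1(\ac,\bc)\subseteq\Sf(\ac,\bc)$ holds for arbitrary classes, since any single choice $V_n\in\uc_n$ may be regarded as the finite selection $\vc_n=\{V_n\}$, whence $\bigcup_{n\in\omega}\vc_n=\{V_n:n\in\omega\}$ lands in $\bc$ exactly when the $\S1$-selection does. Thus in all three parts only the reverse inclusion needs an argument, and the plan is to prove the three simultaneously through a single template, the only differences being the class from which the covers are taken and the class in which the produced cover must sit.

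So suppose we are given a sequence $\{\uc_n:n\in\omega\}$ of covers of the appropriate type and fix, for each $n$, a bijective enumeration $\uc_n=\{U^n_m:m\in\omega\}$. The first step is to pass to the diagonal covers $\vc_n=\{\bigcap_{i\le n}U^i_m:m\in\omega\}$. For parts (1) and (2) these are $\gamma_{\mathfrak B}$-covers by \ref{lemma202}, and for part (3) the same statement for ordinary $\gamma$-covers holds by the identical proof (for a point $x$, only finitely many $m$ fail $x\in U^i_m$ for some $i\le n$). Writing $D^n_m=\bigcap_{i\le n}U^i_m$, the decisive structural feature is the nesting $D^n_m\subseteq U^j_m$ for every $j\le n$: a single diagonal set simultaneously lies below a member of each of $\uc_0,\dots,\uc_n$.

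Next I would apply the hypothesised $\Sf$-principle to the sequence $\{\vc_n\}$ (legitimate, as each $\vc_n$ is of the source type), obtaining finite sets $\wc_n\subseteq\vc_n$ whose union $\bigcup_n\wc_n$ is a cover of the target type. Let $S=\{n:\wc_n\neq\emptyset\}$ and pick one set $D^n_{m(n)}\in\wc_n$ for each $n\in S$. A routine block argument shows $\{D^n_{m(n)}:n\in S\}$ is again of the target type: for a fixed target datum $T$ (a point, or a member of $\mathfrak B$) only finitely many members of $\bigcup_n\wc_n$ miss $T$, so for all but finitely many $n\in S$ every member of $\wc_n$, in particular $D^n_{m(n)}$, contains $T$. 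Finally I would convert these into a genuine $\S1$-selection from the original covers: for each $j\in\omega$ set $n(j)=\min\{n\in S:n\ge j\}$ and $V_j=U^j_{m(n(j))}\in\uc_j$. By the nesting, $D^{n(j)}_{m(n(j))}\subseteq V_j$, and since $n(j)\to\infty$ the target-cover property of $\{D^n_{m(n)}\}$ pushes forward through these supersets to give that $\{V_j:j\in\omega\}$ is of the target type, completing the reverse inclusion.

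The step I expect to be the main obstacle is not the block counting but making sure that every index $j$ receives a legitimate choice $V_j\in\uc_j$, including those $j$ for which the corresponding block $\wc_j$ is empty; this is exactly what the passage to the diagonal covers buys, since the witness $D^{n}_{m(n)}$ at a high level $n$ already controls $\uc_j$ for all $j\le n$. A secondary point to verify is that the resulting family is infinite (as $\gamma$-type covers must be): were $\{V_j\}$ finite, some value would repeat infinitely often and would then have to contain every member of $\mathfrak B$ (resp. every point), contradicting properness of the sets involved, so infiniteness is automatic. Only the choice of source/target class and the invocation of \ref{lemma202} versus its ordinary-$\gamma$ analogue distinguish the three parts.
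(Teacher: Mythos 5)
Your overall architecture is the same as the paper's (diagonal covers $D^n_m=\bigcap_{i\le n}U^i_m$ via Lemma~\ref{lemma202}, apply the $\Sf$-hypothesis, select one set per block, push forward through the nesting $D^n_m\subseteq U^j_m$ for $j\le n$), but there is a genuine gap at the selection step. Your claim that ``only finitely many members of $\bigcup_n\wc_n$ miss $T$, so for all but finitely many $n\in S$ every member of $\wc_n$ contains $T$'' is false: the finitely many bad members of the \emph{family} $\bigcup_n\wc_n$ can each recur in infinitely many of the blocks $\wc_n$, since the same subset of $X$ may equal $D^n_m$ for infinitely many pairs $(n,m)$. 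Concretely, take all $\uc_n$ equal to a single $\gamma_{\mathfrak B}$-cover $\{W\}\cup\{(-m,m):m\ge 1\}$ of $\mathbb R$ with $\mathfrak B$ the finite sets, where $W=\mathbb R\setminus\{0\}$ and $U^n_0=W$ for every $n$. Then $D^n_0=W$ for all $n$, a legitimate $\Sf$-output is $\wc_n=\{W,(-n-1,n+1)\}$, and your prescription permits the choice $D^n_{m(n)}=W$ for every $n$. The resulting selection is the constant family $\{W\}$, and the final sets $V_j=U^j_0=W$ do not even cover $X$, let alone form a $\gamma$-cover. For the same reason your closing ``infiniteness is automatic'' argument collapses: it presupposes the eventual-containment property, which is exactly what fails under repetition.

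The paper avoids this by making the choices pairwise \emph{new}: since $\bigcup_n\wc_n$ is infinite while each $\wc_n$ is finite, one can recursively pick $n_0<n_1<\cdots$ with $\wc_{n_k}\setminus\bigcup_{j<k}\wc_{n_j}\neq\emptyset$ and choose $D^{n_k}_{m_k}$ from that difference. The chosen sets are then pairwise distinct, so $\{D^{n_k}_{m_k}:k\in\omega\}$ is an \emph{infinite} subfamily of the target-type cover $\bigcup_n\wc_n$ and is therefore itself of target type (Lemma~\ref{lemma201}, resp.\ its analogue for ordinary $\gamma$-covers); after this correction your push-forward $V_j=U^j_{m_{k+1}}$ for $n_k<j\le n_{k+1}$ goes through exactly as you describe. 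So the fix is local, but as written the arbitrary choice from each nonempty block is a real error, not a presentational shortcut.
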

\begin{proof}
We provide a proof only for (2); the remaining assertions can be proved similarly.
Obviously, $\S1(\Gamma_\mathfrak{B}, \Gamma)$ implies $\Sf(\Gamma_\mathfrak{B}, \Gamma)$. Conversely, let $X$ satisfy $\Sf(\Gamma_\mathfrak{B}, \Gamma)$. Let $(\mathcal{U}_n)$ be a sequence of $\gamma_\mathfrak{B}$-covers of $X$. Without loss generality we assume that for each $n$ $\mathcal{U}_n = \{U_m^n : m\in \omega\}$. We now define a sequence $(\mathcal{W}_n)$ of $\gamma_\mathfrak{B}$-covers of $X$ by $\mathcal{W}_n = \{V_m^n : m\in \omega\}$ with $V_m^n = \cap_{i\in n+1} U_m^i$. Since $X$ satisfies $\Sf(\Gamma_\mathfrak{B}, \Gamma)$, there exists a sequence $(\mathcal{H}_n)$ such that for each $n$ $\mathcal{H}_n$ is a finite subset of $\mathcal{W}_n$ and $\cup_{n\in \omega} \mathcal{H}_n$ is a $\gamma$-cover of $X$. We can find a sequence $n_0 < n_1 < n_2 < \cdots$ of members of $\omega$ such that for each $k$ $\mathcal{H}_{n_k} \setminus \cup_{m\in k} \mathcal{H}_{n_m} \neq \emptyset$ since $\cup_{n\in \omega} \mathcal{H}_n$ being a $\gamma$-cover of $X$ is infinite. For each $k$ pick a $m_k \in \omega$ such that $V_{m_k}^{n_k} \in \mathcal{H}_{n_k} \setminus \cup_{m\in k} \mathcal{H}_{n_m}$. Clearly $\{V_{m_k}^{n_k} : k\in \omega\}$ is a $\gamma$-cover of $X$. Define a sequence $(U_n)$ such that for each $n$ $U_n \in \mathcal{U}_n$ and given by $U_n = U_{m_{k+1}}^n$ for each $n$ with $n_k < n \leq n_{k+1}$. Observe that $\{U_n : n\in \omega\}$ is a $\gamma$-cover of $X$. Thus $X$ satisfies $\S1(\Gamma_\mathfrak{B}, \Gamma)$.
\end{proof}

\begin{Th}
\label{thm5}
Let $\bk$ be a bornology on a topological space $X$ with a closed base. The following assertions hold.
\begin{enumerate}[wide=0pt, label={\upshape(\arabic*)}, leftmargin=*, ref={\theTh(\arabic*)}]
  \item\label{thm501} $\Uf(\mathcal{O}, \mathcal{O}_\mathfrak{B}) = \Uf(\Omega, \mathcal{O}_\mathfrak{B})$.

  \item\label{thm502} $\Uf(\mathcal{O}, \Gamma_\mathfrak{B}) = \Uf(\Omega, \Gamma_\mathfrak{B})$.
\end{enumerate}
\end{Th}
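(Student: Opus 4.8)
The plan is to handle both parts with a single argument, observing that the conversion between cover classes never touches the target class $\mathcal{B}$, so that taking $\mathcal{B}=\mathcal{O}_\mathfrak{B}$ yields (1) and $\mathcal{B}=\Gamma_\mathfrak{B}$ yields (2). The forward implications are immediate: since $\mathcal{O}_\mathfrak{B}\subseteq\Omega\subseteq\mathcal{O}$, every sequence of $\omega$-covers is already a sequence of open covers, so any witness furnished by $\Uf(\mathcal{O},\mathcal{B})$ on such a sequence is in particular a witness for $\Uf(\Omega,\mathcal{B})$. Hence $\Uf(\mathcal{O},\mathcal{B})\Rightarrow\Uf(\Omega,\mathcal{B})$ in both cases, and the work lies entirely in the reverse implications.

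For the converse I would use the standard finite-union device. Assume $X$ satisfies $\Uf(\Omega,\mathcal{B})$ and let $(\mathcal{U}_n)$ be a sequence of open covers of $X$. First I would dispose of a degenerate case: if for some $n_0$ the space $X$ is a finite union of members of $\mathcal{U}_{n_0}$, take $\mathcal{V}_{n_0}$ to be that finite subfamily (so $\bigcup\mathcal{V}_{n_0}=X$) and $\mathcal{V}_n=\emptyset$ for $n\neq n_0$; the escape clause ``$\bigcup\mathcal{V}_n=X$ for some $n$'' in the definition of $\Uf$ then settles this case at once. So I may assume $X$ is never a finite union of members of any single $\mathcal{U}_n$. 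For each $n$ set $\mathcal{U}_n^\ast=\{U_{i_1}\cup\cdots\cup U_{i_k}:k\in\omega,\ U_{i_j}\in\mathcal{U}_n\}$, the collection of all finite unions of members of $\mathcal{U}_n$. Each $\mathcal{U}_n^\ast$ consists of open sets, does not contain $X$ (by the case assumption), and given a finite $F=\{x_1,\dots,x_k\}\subseteq X$ one may choose $U_j\in\mathcal{U}_n$ with $x_j\in U_j$ so that $F\subseteq U_1\cup\cdots\cup U_k\in\mathcal{U}_n^\ast$; thus each $\mathcal{U}_n^\ast$ is an $\omega$-cover of $X$.

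Next I would apply $\Uf(\Omega,\mathcal{B})$ to the sequence $(\mathcal{U}_n^\ast)$, obtaining finite $\mathcal{W}_n\subseteq\mathcal{U}_n^\ast$ with $\{\bigcup\mathcal{W}_n:n\in\omega\}\in\mathcal{B}$; the escape clause cannot fire here, because $\bigcup\mathcal{W}_n$ is again a finite union of members of $\mathcal{U}_n$ and so is distinct from $X$. Since $\mathcal{W}_n$ is finite and each of its elements is a finite union of members of $\mathcal{U}_n$, the set $\bigcup\mathcal{W}_n$ is itself a finite union of members of $\mathcal{U}_n$, so I can select a finite $\mathcal{V}_n\subseteq\mathcal{U}_n$ with $\bigcup\mathcal{V}_n=\bigcup\mathcal{W}_n$. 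Then $\{\bigcup\mathcal{V}_n:n\in\omega\}=\{\bigcup\mathcal{W}_n:n\in\omega\}\in\mathcal{B}$, which is exactly what $\Uf(\mathcal{O},\mathcal{B})$ demands.

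The argument is largely routine, and I expect no genuine obstacle; the only points requiring care are the bookkeeping around the escape clause (the possibility that $X$ is a finite union of members of some $\mathcal{U}_n$) and the simple but crucial observation that the passage $\mathcal{W}_n\mapsto\mathcal{V}_n$ preserves the union $\bigcup\mathcal{V}_n=\bigcup\mathcal{W}_n$. It is precisely this preservation that renders the identity of the target class $\mathcal{B}$ irrelevant to the conversion and lets one proof cover both (1) and (2) at once.
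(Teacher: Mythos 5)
Your proof is correct and follows essentially the same finite-union device as the paper's: replace each open cover $\mathcal{U}_n$ by the collection of finite unions of its members, apply $\Uf(\Omega,\mathcal{B})$ to that sequence of $\omega$-covers, and convert each selected finite family back to a finite subfamily of $\mathcal{U}_n$ with the same union, so that the target class $\mathcal{B}$ is untouched. In fact your write-up is slightly more careful than the paper's, which silently treats the family of all finite unions as an $\omega$-cover without excluding the possibility that $X$ itself is a finite union of members of some $\mathcal{U}_n$ (in which case $X$ would belong to that family, violating the definition of $\omega$-cover); your explicit dispatch of this degenerate case via the escape clause of $\Uf$ closes that small gap.
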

\begin{proof}
$(1)$. Clearly, $\Uf(\mathcal{O}, \mathcal{O}_\mathfrak{B})$ implies $\Uf(\Omega, \mathcal{O}_\mathfrak{B})$. Conversely, suppose that $X$ satisfies $\Uf(\Omega, \mathcal{O}_\mathfrak{B})$. Let $(\mathcal{U}_n)$ be a sequence of open covers of $X$. We now define a sequence $(\mathcal{W}_n)$ of $\omega$-covers of $X$ as follows. For each $n$ let $\mathcal{W}_n$ be the collection of all finite unions of members of $\mathcal{U}_n$. Since $X$ satisfies $\Uf(\Omega, \mathcal{O}_\mathfrak{B})$, there exists a sequence $(\mathcal{H}_n)$ such that for each $n$ $\mathcal{H}_n$ is a finite subset of $\mathcal{W}_n$ and $\{\cup \mathcal{H}_n : n\in \omega\}$ is an open $\mathfrak{B}$-cover of $X$. Now $(\mathcal{H}_n)$ gives a sequence $(\mathcal{V}_n)$ such that for each $n$ $\mathcal{V}_n$ is a finite subset of $\mathcal{U}_n$ and $\cup \mathcal{V}_n = \cup \mathcal{H}_n$. Thus $X$ satisfies $\Uf(\mathcal{O}, \mathcal{O}_\mathfrak{B})$.
The proof of $(2)$ is similar.
\end{proof}

\begin{Th}
\label{thm6}
Let $\bk$ be a bornology on a topological space $X$ with a closed base. Let $X$ be a Lindel\"{o}f space. The following assertions are equivalent.
\begin{enumerate}[wide=0pt, label={\upshape(\arabic*)}, leftmargin=*, ref={\theTh(\arabic*)}]
  \item\label{thm601} $\Uf(\mathcal{O}, \mathcal{O}_\mathfrak{B}) = \Uf(\Gamma, \mathcal{O}_\mathfrak{B})$.

  \item\label{thm602} $\Uf(\mathcal{O}, \Gamma_\mathfrak{B}) = \Uf(\Gamma, \Gamma_\mathfrak{B})$.
\end{enumerate}
\end{Th}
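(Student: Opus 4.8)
The plan is to observe that both (1) and (2) are the instance, at the target $\mathcal{B}=\mathcal{O}_\mathfrak{B}$ and at the target $\mathcal{B}=\Gamma_\mathfrak{B}$ respectively, of one and the same reduction, namely the equality $\Uf(\mathcal{O},\mathcal{B})=\Uf(\Gamma,\mathcal{B})$. Since $\Gamma\subseteq\mathcal{O}$, the inclusion $\Uf(\mathcal{O},\mathcal{B})\subseteq\Uf(\Gamma,\mathcal{B})$ is automatic for every $\mathcal{B}$, so in each case only the reverse implication $\Uf(\Gamma,\mathcal{B})\Rightarrow\Uf(\mathcal{O},\mathcal{B})$ carries content. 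I would therefore prove, once and for all, that for a Lindel\"of $X$ this reverse implication holds for an \emph{arbitrary} class $\mathcal{B}$ of covers; specializing to $\mathcal{B}=\mathcal{O}_\mathfrak{B}$ yields (1) and to $\mathcal{B}=\Gamma_\mathfrak{B}$ yields (2), whence the two assertions are in particular equivalent. If a literal bi-implication is preferred, both directions simply route through this common reduction.

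The engine is a partial-union simulation that upgrades arbitrary open covers to $\gamma$-covers in the \emph{source}. Given a sequence $(\mathcal{U}_n)$ of open covers and assuming $\Uf(\Gamma,\mathcal{B})$, I would first use the Lindel\"of property of $X$ to replace each $\mathcal{U}_n$ by a countable subcover $\{U^n_m:m\in\omega\}$, and then form the increasing family $\mathcal{P}_n=\{P^n_m:m\in\omega\}$ of partial unions $P^n_m=\bigcup_{i\le m}U^n_i$. Exactly as in \cref{lemma4}, each point of $X$ lies in all but finitely many $P^n_m$, so $\mathcal{P}_n$ is an (increasing) $\gamma$-cover of $X$. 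Applying the assumed $\Uf(\Gamma,\mathcal{B})$ to $(\mathcal{P}_n)$ produces finite sets $\mathcal{F}_n\subseteq\mathcal{P}_n$ with $\{\bigcup\mathcal{F}_n:n\in\omega\}\in\mathcal{B}$. Because $\mathcal{P}_n$ is increasing, $\bigcup\mathcal{F}_n=P^n_{k_n}$, where $k_n$ is the largest index occurring in $\mathcal{F}_n$, and $P^n_{k_n}=\bigcup\{U^n_i:i\le k_n\}$ is the union of the \emph{finite} subfamily $\mathcal{V}_n=\{U^n_i:i\le k_n\}$ of the original cover $\mathcal{U}_n$. Thus $\{\bigcup\mathcal{V}_n:n\in\omega\}=\{P^n_{k_n}:n\in\omega\}\in\mathcal{B}$, which is precisely a witness for $\Uf(\mathcal{O},\mathcal{B})$.

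The conceptual point, and the step I expect to be the main obstacle, is to recognize that one must partial-union the \emph{source} covers rather than the selected output: upgrading the output from $\mathcal{O}_\mathfrak{B}$ to $\Gamma_\mathfrak{B}$ directly would be the (false) Menger-to-Hurewicz type passage, whereas partial-unioning the source is harmless, since the operation is absorbed by the finite-union nature of $\Uf$ and the union of an initial segment of $\mathcal{P}_n$ is literally the union of an initial segment of $\mathcal{U}_n$. The remaining technical care is to ensure that each $\mathcal{P}_n$ is a genuine \emph{infinite} $\gamma$-cover, so as to be an admissible input for $\Uf(\Gamma,\mathcal{B})$: if for some $n$ the partial unions stabilize, i.e.\ $P^n_m=X$ for some $m$ (equivalently $\mathcal{U}_n$ has a finite subcover), then choosing $\mathcal{V}_n$ to be that finite subcover gives $\bigcup\mathcal{V}_n=X$ and the defining disjunct ``$\bigcup\mathcal{V}_n=X$ for some $n$'' of $\Uf$ is satisfied outright; otherwise, after passing to a strictly increasing subsequence, each $\mathcal{P}_n$ is an infinite $\gamma$-cover and the argument above applies. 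Since the construction is insensitive to whether $\mathcal{B}$ is $\mathcal{O}_\mathfrak{B}$ or $\Gamma_\mathfrak{B}$ (one merely reads off the corresponding output class), both (1) and (2) follow, and hence they are equivalent. One may, if desired, first invoke \cref{thm5} to work with $\Omega$ in place of $\mathcal{O}$, but this is not needed for the argument.
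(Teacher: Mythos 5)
Your proposal is correct and follows essentially the same route as the paper's proof: the Lindel\"{o}f property supplies countable covers, whose partial unions form increasing $\gamma$-covers in the source, and applying $\Uf(\Gamma,\cdot)$ to these and reading off the corresponding initial segments of the original covers yields the required finite selections --- exactly the paper's argument, which proves assertion (1) this way and declares (2) analogous, just as your uniform treatment of an arbitrary target class $\mathcal{B}$ does. Your extra care about stabilizing partial unions (the finite-subcover case) and about the infiniteness of the resulting $\gamma$-covers tightens points the paper glosses over, but it does not constitute a different approach.
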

\begin{proof}
$(1)$. Clearly, $\Uf(\mathcal{O}, \mathcal{O}_\mathfrak{B})$ implies $\Uf(\Gamma, \mathcal{O}_\mathfrak{B})$. Conversely, suppose that $X$ satisfies $\Uf(\Gamma, \mathcal{O}_\mathfrak{B})$. Let $(\mathcal{U}_n)$ be a sequence of open covers of $X$. Without loss generality we assume that for each $n$ $\mathcal{U}_n = \{U_m^n : m\in \omega\}$. We now define a sequence $(\mathcal{W}_n)$ of $\gamma$-covers of $X$ by $\mathcal{W}_n = \{V_m^n : m\in \omega\}$ with $V_m^n = \cup_{i\in m+1} U_i^n$. Since $X$ satisfies $\Uf(\Gamma, \mathcal{O}_\mathfrak{B})$, we get a sequence $(m_n : n\in \omega)$ of elements of $\omega$ such that $\{V_{m_n}^n : n\in \omega\}$ is an open $\mathfrak{B}$-cover of $X$. For each $n$ choose $\mathcal{V}_n = \{U_i^n : i\in m_n+1\}$. Then for each $n$ $\mathcal{V}_n$ is a finite subset of $\mathcal{U}_n$ and $\{\cup \mathcal{V}_n : n\in \omega\}$ is an open $\mathfrak{B}$-cover of $X$. Thus $X$ satisfies $\Uf(\mathcal{O}, \mathcal{O}_\mathfrak{B})$. The proof of $(2)$ is analogous.
\end{proof}

\begin{Th}
\label{thm2}
Let $\bk$ be a bornology on a topological space $X$ with a closed base. Let $X$ be a $\mathfrak{B}$-Lindel\"{o}f space. The following assertions hold.
\begin{enumerate}[wide=0pt, label={\upshape(\arabic*)}, leftmargin=*, ref={\theTh(\arabic*)}]
  \item\label{thm201} $\S1(\mathcal{O}_\mathfrak{B}, \Gamma_\mathfrak{B}) = \Sf(\mathcal{O}_\mathfrak{B}, \Gamma_\mathfrak{B})$.

  \item\label{thm202} $\S1(\mathcal{O}_\mathfrak{B}, \Gamma) = \Sf(\mathcal{O}_\mathfrak{B}, \Gamma)$.
\end{enumerate}
\end{Th}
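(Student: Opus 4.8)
The plan is to prove both parts by a single scheme. In each case the implication $\S1(\mathcal{O}_\mathfrak{B},\cdot)\Rightarrow \Sf(\mathcal{O}_\mathfrak{B},\cdot)$ is immediate, because a one-element selection is in particular a finite selection; so the whole content lies in the reverse implication $\Sf\Rightarrow\S1$. I would carry out part (1), $\Sf(\mathcal{O}_\mathfrak{B},\Gamma_\mathfrak{B})\Rightarrow\S1(\mathcal{O}_\mathfrak{B},\Gamma_\mathfrak{B})$, in full, and note that part (2) is obtained verbatim after replacing $\Gamma_\mathfrak{B}$ by $\Gamma$ and using the standard fact that an infinite subfamily of a $\gamma$-cover is a $\gamma$-cover in place of \ref{lemma201}.

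First I would invoke the $\mathfrak{B}$-Lindel\"{o}f hypothesis: given a sequence $(\mathcal{U}_n)$ of open $\mathfrak{B}$-covers, replace each $\mathcal{U}_n$ by a countable open $\mathfrak{B}$-subcover, so that without loss of generality $\mathcal{U}_n=\{U_m^n:m\in\omega\}$. This countability is exactly what enables the diagonal construction below, and it is the only point at which $\mathfrak{B}$-Lindel\"{o}fness is used. The key construction is the intersection trick already employed in Theorem \ref{thm1}: for each $n$ put $\mathcal{W}_n=\{\cap_{i\in n+1}U_{m_i}^i : m_i\in\omega\}$. By Lemma \ref{lemma3} each $\mathcal{W}_n$ is again an open $\mathfrak{B}$-cover of $X$ (its members are proper and open, being finite intersections of proper open sets), so $(\mathcal{W}_n)$ is legitimate input for $\Sf(\mathcal{O}_\mathfrak{B},\Gamma_\mathfrak{B})$. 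Applying the hypothesis yields finite $\mathcal{H}_n\subseteq\mathcal{W}_n$ with $\bigcup_{n\in\omega}\mathcal{H}_n\in\Gamma_\mathfrak{B}$.

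Since a $\gamma_\mathfrak{B}$-cover is infinite, I can thin out to a strictly increasing sequence $n_0<n_1<\cdots$ with $\mathcal{H}_{n_k}\setminus\bigcup_{j<k}\mathcal{H}_{n_j}\neq\emptyset$ and pick a fresh element $W_k=\cap_{i\in n_k+1}U_{m_i^k}^i$ in each such difference. The family $\{W_k:k\in\omega\}$ is then an infinite subfamily of the $\gamma_\mathfrak{B}$-cover $\bigcup_n\mathcal{H}_n$, hence a $\gamma_\mathfrak{B}$-cover by Lemma \ref{lemma201}. Finally I compress to a single selection: for $n$ in the block $n_{k-1}<n\leq n_k$ (with $n_{-1}:=-1$), the index $n$ is one of the factors of $W_k$, so $W_k\subseteq U_{m_n^k}^n$, and I set $U_n:=U_{m_n^k}^n\in\mathcal{U}_n$. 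Given $B\in\mathfrak{B}$, choose $K$ with $B\subseteq W_k$ for all $k\geq K$; then $B\subseteq W_k\subseteq U_n$ for every $n>n_{K-1}$, so $\{U_n:n\in\omega\}$ is a $\gamma_\mathfrak{B}$-cover and $\S1(\mathcal{O}_\mathfrak{B},\Gamma_\mathfrak{B})$ holds.

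The step I expect to be the main obstacle — and the reason the intersection trick is needed rather than a direct appeal to $\Sf$ — is the compression. One cannot merge the finite selections $\mathcal{H}_n$ by taking unions, since a union of members of $\mathcal{U}_n$ need not itself lie in $\mathcal{U}_n$ and would only yield a $\Uf$-type conclusion; the pre-processing into $\mathcal{W}_n$ is what guarantees that a single chosen element simultaneously sits inside a member of each of $\mathcal{U}_0,\dots,\mathcal{U}_n$, so that block-wise assignment produces a genuine one-per-round selection. For part (2) the only modification is that $\{W_k\}$ and then $\{U_n\}$ are checked to be ordinary $\gamma$-covers via the identical pointwise eventual-containment computation, replacing $B\in\mathfrak{B}$ by $x\in X$.
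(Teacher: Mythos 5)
Your proposal is correct and follows essentially the same route as the paper's proof: the same intersection trick (Lemma~\ref{lemma3}) to form $\mathcal{W}_n$, the same application of $\Sf$ followed by thinning to indices $n_0<n_1<\cdots$ with fresh elements, and the same block-wise compression into a one-per-round selection (the paper details part (2) instead of part (1), which is immaterial). Your explicit remark that the $\mathfrak{B}$-Lindel\"{o}f hypothesis is used precisely to pass to countable $\mathfrak{B}$-subcovers makes visible a step the paper leaves implicit in its ``without loss of generality''.
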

\begin{proof}
We provide a proof of (2) and omit (1), as the latter is analogous. Obviously, $\S1(\mathcal{O}_\mathfrak{B}, \Gamma)$ implies $\Sf(\mathcal{O}_\mathfrak{B}, \Gamma)$. Conversely, let $X$ satisfy $\Sf(\mathcal{O}_\mathfrak{B}, \Gamma)$. Let $(\mathcal{U}_n)$ be a sequence of open $\mathfrak{B}$-covers of $X$. Without loss generality we assume that for each $n$ $\mathcal{U}_n = \{U_m^n : m\in \omega\}$. We now define a sequence $(\mathcal{W}_n)$ of $\mathcal{O}_\mathfrak{B}$-covers of $X$ by $\mathcal{W}_n = \{\cap_{i\in n+1} U_{m_i}^i : m_i\in \omega\}$. Since $X$ satisfies $\Sf(\mathcal{O}_\mathfrak{B}, \Gamma)$, there exists a sequence $(\mathcal{H}_n)$ such that for each $n$ $\mathcal{H}_n$ is a finite subset of $\mathcal{W}_n$ and $\cup_{n\in \omega} \mathcal{H}_n$ is a $\gamma$-cover of $X$. We can find a sequence $n_0 < n_1 < n_2 < \cdots$ of members of $\omega$ such that for each $k$ $\mathcal{H}_{n_k} \setminus \cup_{j\in k} \mathcal{H}_{n_j} \neq \emptyset$ since $\cup_{n\in \omega} \mathcal{H}_n$ being a $\gamma$-cover of $X$ is infinite. For each $k$ pick $m_0^k, m_1^k, \dotsc, m_{n_k}^k \in \omega$ such that $\cap_{i\in n_k+1} U_{m_i^k}^i \in \mathcal{H}_{n_k} \setminus \cup_{j\in k} \mathcal{H}_{n_j}$. Clearly $\{\cap_{i\in n_k+1} U_{m_i^k}^i : k\in \omega\}$ is a $\gamma$-cover of $X$. Define a sequence $(U_j)$ by
\[U_j = \begin{cases}
          U_{m_j^0}^j, & \mbox{if } j\in n_0+1 \\
          U_{m_j^{k+1}}^j, & \mbox{if } n_k< j \leq n_{k+1} \mbox{ and } k\in \omega.
        \end{cases}\]
Observe that $\{U_n : n\in \omega\}$ is a $\gamma$-cover of $X$. Thus $X$ satisfies $\S1(\mathcal{O}_\mathfrak{B}, \Gamma)$.
\end{proof}

\begin{Th}
\label{thm3}
Let $\bk$ be a bornology on a topological space $X$ with a closed base. Let $X$ be a $\mathfrak{B}$-Lindel\"{o}f space. The following assertions hold.
\begin{enumerate}[wide=0pt, label={\upshape(\arabic*)}, leftmargin=*, ref={\theTh(\arabic*)}]
  \item\label{thm301} $\Sf(\Gamma_\mathfrak{B}, \mathcal{O}) = \Uf(\mathcal{O}_\mathfrak{B}, \mathcal{O})$.

  \item\label{thm302} $\Uf(\Gamma_\mathfrak{B}, \mathcal{O}) = \Uf(\mathcal{O}_\mathfrak{B}, \mathcal{O})$.

  \item\label{thm303} $\Sf(\mathcal{O}_\mathfrak{B}, \mathcal{O}) = \Uf(\mathcal{O}_\mathfrak{B}, \mathcal{O})$.
\end{enumerate}
\end{Th}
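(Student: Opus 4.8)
The plan is to show that the four properties appearing in the statement, namely $\Sf(\Gamma_\mathfrak{B},\mathcal{O})$, $\Uf(\Gamma_\mathfrak{B},\mathcal{O})$, $\Uf(\mathcal{O}_\mathfrak{B},\mathcal{O})$ and $\Sf(\mathcal{O}_\mathfrak{B},\mathcal{O})$, are all equivalent; the three asserted equalities are then immediate consequences. All but one of the needed implications are cheap, the single substantial step being $\Uf(\Gamma_\mathfrak{B},\mathcal{O})\Rightarrow\Sf(\mathcal{O}_\mathfrak{B},\mathcal{O})$.

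First I record the routine implications. Since the target class is $\mathcal{O}$, for any source class $\mathcal{A}$ one has $\Sf(\mathcal{A},\mathcal{O})\Rightarrow\Uf(\mathcal{A},\mathcal{O})$: applying $\Sf$ to a sequence $(\mathcal{U}_n)$ gives finite $\mathcal{V}_n\subseteq\mathcal{U}_n$ with $\bigcup_n\mathcal{V}_n$ an open cover, and then $\bigcup_n(\cup\mathcal{V}_n)=\bigcup(\bigcup_n\mathcal{V}_n)=X$, so $\{\cup\mathcal{V}_n:n\in\omega\}$ is again an open cover. Moreover, because $\Gamma_\mathfrak{B}\subseteq\mathcal{O}_\mathfrak{B}$, every sequence of $\gamma_\mathfrak{B}$-covers is a sequence of open $\mathfrak{B}$-covers, whence $\Sf(\mathcal{O}_\mathfrak{B},\mathcal{O})\Rightarrow\Sf(\Gamma_\mathfrak{B},\mathcal{O})$ and $\Uf(\mathcal{O}_\mathfrak{B},\mathcal{O})\Rightarrow\Uf(\Gamma_\mathfrak{B},\mathcal{O})$. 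This already yields the chains $\Sf(\mathcal{O}_\mathfrak{B},\mathcal{O})\Rightarrow\Uf(\mathcal{O}_\mathfrak{B},\mathcal{O})\Rightarrow\Uf(\Gamma_\mathfrak{B},\mathcal{O})$ and $\Sf(\mathcal{O}_\mathfrak{B},\mathcal{O})\Rightarrow\Sf(\Gamma_\mathfrak{B},\mathcal{O})\Rightarrow\Uf(\Gamma_\mathfrak{B},\mathcal{O})$.

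For the key step $\Uf(\Gamma_\mathfrak{B},\mathcal{O})\Rightarrow\Sf(\mathcal{O}_\mathfrak{B},\mathcal{O})$, let $(\mathcal{U}_n)$ be a sequence of open $\mathfrak{B}$-covers. Since $X$ is $\mathfrak{B}$-Lindel\"{o}f, I may replace each $\mathcal{U}_n$ by a countable $\mathfrak{B}$-subcover and so assume $\mathcal{U}_n=\{U^n_m:m\in\omega\}$. By Lemma \ref{lemma4} the sets $V^n_m=\cup_{i\in m+1}U^n_i$ form a $\gamma_\mathfrak{B}$-cover $\mathcal{W}_n=\{V^n_m:m\in\omega\}$ of $X$, and these increase with $m$. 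Applying $\Uf(\Gamma_\mathfrak{B},\mathcal{O})$ to $(\mathcal{W}_n)$ produces finite $\mathcal{H}_n\subseteq\mathcal{W}_n$ with $\{\cup\mathcal{H}_n:n\in\omega\}$ an open cover (or $\cup\mathcal{H}_n=X$ for some $n$). As each $\mathcal{W}_n$ is a chain under inclusion, $\cup\mathcal{H}_n=V^n_{m_n}$ for $m_n=\max\{m:V^n_m\in\mathcal{H}_n\}$; put $\mathcal{V}_n=\{U^n_i:i\in m_n+1\}$, a finite subset of $\mathcal{U}_n$ with $\cup\mathcal{V}_n=V^n_{m_n}=\cup\mathcal{H}_n$. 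Then $\bigcup_n(\cup\mathcal{V}_n)=\bigcup_n(\cup\mathcal{H}_n)=X$, so $\bigcup_n\mathcal{V}_n$ is an open cover of $X$ and $\Sf(\mathcal{O}_\mathfrak{B},\mathcal{O})$ holds.

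Combining everything, the loop $\Uf(\Gamma_\mathfrak{B},\mathcal{O})\Rightarrow\Sf(\mathcal{O}_\mathfrak{B},\mathcal{O})\Rightarrow\Uf(\mathcal{O}_\mathfrak{B},\mathcal{O})\Rightarrow\Uf(\Gamma_\mathfrak{B},\mathcal{O})$ shows that these three coincide, giving (2) and (3); and since $\Sf(\mathcal{O}_\mathfrak{B},\mathcal{O})\Rightarrow\Sf(\Gamma_\mathfrak{B},\mathcal{O})\Rightarrow\Uf(\Gamma_\mathfrak{B},\mathcal{O})$ sandwiches $\Sf(\Gamma_\mathfrak{B},\mathcal{O})$ between two now-equivalent properties, (1) follows as well. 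The only delicate point is the key step, where the $\mathfrak{B}$-Lindel\"{o}f hypothesis is needed to reduce to countable covers and Lemma \ref{lemma4} to pass to the chain of finite unions; I expect the main obstacle to be checking that the single ``level'' $V^n_{m_n}$ extracted from the finite set $\mathcal{H}_n$ genuinely pulls back to a finite subfamily of the original $\mathcal{U}_n$ whose unions still cover $X$.
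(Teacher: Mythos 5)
Your proof is correct, and its technical core is identical to the paper's: reduce to countable covers using $\mathfrak{B}$-Lindel\"{o}fness, pass to the increasing finite unions $V^n_m=\cup_{i\in m+1}U^n_i$, which form $\gamma_\mathfrak{B}$-covers by Lemma~\ref{lemma4}, apply the hypothesis on $\gamma_\mathfrak{B}$-covers, and pull the selected levels back to finite subfamilies of the original covers. Where you genuinely differ is the decomposition. The paper proves each of the three equalities as a separate two-directional argument (only (2) is written out; (1) and (3) are declared ``similar''), so the same construction is in effect run three times. You run it once, aimed from the weakest of the four properties to the strongest --- $\Uf(\Gamma_\mathfrak{B},\mathcal{O})\Rightarrow\Sf(\mathcal{O}_\mathfrak{B},\mathcal{O})$ --- the only extra observation being that $\bigcup_n\mathcal{V}_n$ itself covers $X$, not merely $\{\cup\mathcal{V}_n:n\in\omega\}$, which upgrades the paper's conclusion from $\Uf$ to $\Sf$ at no cost. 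You then close the cycle with the trivial monotonicity implications ($\Sf(\mathcal{A},\mathcal{O})\Rightarrow\Uf(\mathcal{A},\mathcal{O})$, and restriction along $\Gamma_\mathfrak{B}\subseteq\mathcal{O}_\mathfrak{B}$), so all four properties come out equivalent and the three equalities follow simultaneously. This buys economy and eliminates the ``the proofs are similar'' hand-wave; the paper's arrangement buys only that each numbered equality is self-contained. Two cosmetic points, which the paper glosses over in exactly the same way: if some $\mathcal{H}_n$ is empty, take $\mathcal{V}_n=\emptyset$ (permitted in $\Sf$ and $\Uf$); and if some finite union $V^n_m$ equals $X$, it is not a legitimate member of a $\gamma_\mathfrak{B}$-cover, but then $\cup\mathcal{V}_n=X$ for the corresponding $\mathcal{V}_n$ and the conclusion is trivial via the disjunct built into $\Uf$ (and immediately for $\Sf$).
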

\begin{proof}
We detail the proof of (2); the proofs of (1) and (3) are similar and omitted. Clearly, $\Uf(\mathcal{O}_\mathfrak{B}, \mathcal{O})$ implies $\Uf(\Gamma_\mathfrak{B}, \mathcal{O})$. Conversely, suppose that $X$ satisfies $\Uf(\Gamma_\mathfrak{B}, \mathcal{O})$. Let $(\mathcal{U}_n)$ be a sequence of open $\mathfrak{B}$-covers of $X$. Without loss generality we assume that for each $n$ $\mathcal{U}_n = \{U_m^n : m\in \omega\}$. We now define a sequence $(\mathcal{W}_n)$ of $\gamma_\mathfrak{B}$-covers of $X$ by $\mathcal{W}_n = \{V_m^n : m\in \omega\}$ with $V_m^n = \cup_{i\in m+1} U_i^n$. Since $X$ satisfies $\Uf(\Gamma_\mathfrak{B}, \mathcal{O})$, we get a sequence $(m_n : n\in \omega)$ of elements of $\omega$ such that $\{V_{m_n}^n : n\in \omega\}$ covers $X$. For each $n$ choose $\mathcal{V}_n = \{U_i^n : i\in m_n+1\}$. Then for each $n$ $\mathcal{V}_n$ is a finite subset of $\mathcal{U}_n$ and $\{\cup \mathcal{V}_n : n\in \omega\}$ covers $X$. Thus $X$ satisfies $\Uf(\mathcal{O}_\mathfrak{B}, \mathcal{O})$.
\end{proof}

Similarly, the following result can be verified.
\begin{Th}
\label{thm4}
Let $\bk$ be a bornology on a topological space $X$ with a closed base. Let $X$ be a $\mathfrak{B}$-Lindel\"{o}f space. The following assertions hold.
\begin{enumerate}[wide=0pt, label={\upshape(\arabic*)}, leftmargin=*, ref={\theTh(\arabic*)}]
  \item\label{thm401} $\Uf(\mathcal{O}_\mathfrak{B}, \mathcal{O}_\mathfrak{B}) = \Uf(\Gamma_\mathfrak{B}, \mathcal{O}_\mathfrak{B})$.

  \item\label{thm402} $\Uf(\mathcal{O}_\mathfrak{B}, \Gamma_\mathfrak{B}) = \Uf(\Gamma_\mathfrak{B}, \Gamma_\mathfrak{B})$.

  \item\label{thm404} $\Uf(\mathcal{O}_\mathfrak{B}, \Gamma) = \Uf(\Gamma_\mathfrak{B}, \Gamma)$.

  \item\label{thm406} $\Uf(\mathcal{O}_\mathfrak{B}, \Omega) = \Uf(\Gamma_\mathfrak{B}, \Omega)$.
\end{enumerate}
\end{Th}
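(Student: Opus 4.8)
The plan is to observe that all four equalities are instances of a single argument which is completely insensitive to the target class. So I would write $\mathcal{B}$ for any one of $\mathcal{O}_\mathfrak{B}$, $\Gamma_\mathfrak{B}$, $\Gamma$, or $\Omega$ and prove the uniform statement $\Uf(\mathcal{O}_\mathfrak{B}, \mathcal{B}) = \Uf(\Gamma_\mathfrak{B}, \mathcal{B})$, recovering the four cases by specializing $\mathcal{B}$. The forward implication is immediate from the inclusion $\Gamma_\mathfrak{B} \subseteq \mathcal{O}_\mathfrak{B}$ recorded in the preliminaries: every sequence of $\gamma_\mathfrak{B}$-covers is in particular a sequence of open $\mathfrak{B}$-covers, so any selection furnished by $\Uf(\mathcal{O}_\mathfrak{B}, \mathcal{B})$ already serves as a selection for $\Uf(\Gamma_\mathfrak{B}, \mathcal{B})$.

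For the converse I would follow the template of Theorem \ref{thm3}. Let $(\mathcal{U}_n)$ be a sequence of open $\mathfrak{B}$-covers of $X$. The $\mathfrak{B}$-Lindel\"{o}f hypothesis lets me replace each $\mathcal{U}_n$ by a countable $\mathfrak{B}$-subcover, so I assume $\mathcal{U}_n = \{U_m^n : m\in \omega\}$. I then form $\mathcal{W}_n = \{V_m^n : m\in \omega\}$ with $V_m^n = \cup_{i\in m+1} U_i^n$; by Lemma \ref{lemma4} each $\mathcal{W}_n$ is a $\gamma_\mathfrak{B}$-cover of $X$. Applying the hypothesis $\Uf(\Gamma_\mathfrak{B}, \mathcal{B})$ to $(\mathcal{W}_n)$ yields finite subsets $\mathcal{H}_n \subseteq \mathcal{W}_n$ such that $\{\cup \mathcal{H}_n : n\in \omega\} \in \mathcal{B}$ (or $\cup \mathcal{H}_n = X$ for some $n$, as allowed by the definition of $\Uf$).

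The key simplification, exactly as in Theorem \ref{thm3}, is that each $\mathcal{W}_n$ is nested under inclusion, since $V_m^n \subseteq V_{m+1}^n$. Hence the union of the finite set $\mathcal{H}_n$ equals its top element $V_{m_n}^n$ for some $m_n \in \omega$, and unpacking the definition gives $V_{m_n}^n = \cup \mathcal{V}_n$ where $\mathcal{V}_n = \{U_i^n : i\in m_n+1\}$ is a finite subset of the original cover $\mathcal{U}_n$. Since $\{\cup \mathcal{V}_n : n\in \omega\} = \{V_{m_n}^n : n\in \omega\} = \{\cup \mathcal{H}_n : n\in \omega\}$, this selection witnesses $\Uf(\mathcal{O}_\mathfrak{B}, \mathcal{B})$ for the same target class $\mathcal{B}$. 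I do not expect any genuine obstacle: the only point needing care is that membership in the target is preserved verbatim, and this holds precisely because the selected finite-union family is literally unchanged in passing from $(\mathcal{W}_n)$ to $(\mathcal{V}_n)$. Consequently the argument never inspects the internal structure of $\mathcal{B}$, which is exactly why the single proof covers $\mathcal{O}_\mathfrak{B}$, $\Gamma_\mathfrak{B}$, $\Gamma$, and $\Omega$ simultaneously.
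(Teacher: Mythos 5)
Your proposal is correct and is essentially the paper's intended argument: the paper omits the proof with the remark that it is ``similar'' to Theorem~\ref{thm3}, and your uniform treatment of the target class $\mathcal{B}$ reproduces exactly that template (forward direction from $\Gamma_\mathfrak{B}\subseteq\mathcal{O}_\mathfrak{B}$, converse via the nested unions $V_m^n=\cup_{i\in m+1}U_i^n$, Lemma~\ref{lemma4}, and the observation that the selected finite-union family is unchanged). Your explicit remark that the argument never inspects the structure of $\mathcal{B}$ is a clean way of packaging why all four assertions follow at once.
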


\begin{Lemma}[{\cite[Lemma 3.1]{tsaban24}}]
\label{lemma6}
Let $\bk$ be a bornology on a topological space $X$ with a closed base. The following assertions are equivalent.
\begin{enumerate}[wide=0pt,label={\upshape(\arabic*)},leftmargin=*]
  \item $X$ satisfies $\Sf(\Gamma_\mathfrak{B}, \Omega)$.
  \item For each $k\in \omega$, $X$ satisfies $\Sf(\Gamma_\mathfrak{B}, \mathcal{O}_k)$.
\end{enumerate}
\end{Lemma}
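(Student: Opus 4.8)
The plan is to treat the two implications separately, the forward one being immediate and the reverse one requiring a diagonalization across the parameter $k$.

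For $(1)\Rightarrow(2)$ I would simply invoke the observation recorded in the preliminaries that a cover is an $\omega$-cover if and only if it is an open $k$-cover for every $k\in\omega$; in particular $\Omega\subseteq\mathcal{O}_k$ for each fixed $k$. Consequently, if $\{\mathcal{V}_n:n\in\omega\}$ is any selection witnessing $\Sf(\Gamma_\mathfrak{B},\Omega)$ for a given sequence of $\gamma_\mathfrak{B}$-covers, so that $\bigcup_{n\in\omega}\mathcal{V}_n$ is an $\omega$-cover, then $\bigcup_{n\in\omega}\mathcal{V}_n$ is in particular an open $k$-cover, and the very same selection witnesses $\Sf(\Gamma_\mathfrak{B},\mathcal{O}_k)$. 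This requires no further work.

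For the reverse implication $(2)\Rightarrow(1)$ I would argue by partitioning the index set of rounds. Fix a sequence $\{\mathcal{U}_n:n\in\omega\}$ of $\gamma_\mathfrak{B}$-covers of $X$, and split $\omega$ into pairwise disjoint infinite pieces $\{A_k:k\in\omega\}$ with $\omega=\bigcup_{k\in\omega}A_k$. For each fixed $k$, the subfamily $\{\mathcal{U}_n:n\in A_k\}$ is, after an order-preserving re-indexing by $\omega$, a sequence of $\gamma_\mathfrak{B}$-covers, so the hypothesis $\Sf(\Gamma_\mathfrak{B},\mathcal{O}_k)$ yields finite sets $\mathcal{V}_n\subseteq\mathcal{U}_n$ (for $n\in A_k$) with $\bigcup_{n\in A_k}\mathcal{V}_n\in\mathcal{O}_k$. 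Running this over all $k$ assigns to every $n\in\omega$ exactly one finite set $\mathcal{V}_n\subseteq\mathcal{U}_n$, since the $A_k$ partition $\omega$. I would then claim that $\bigcup_{n\in\omega}\mathcal{V}_n$ is an $\omega$-cover: given any finite $F\subseteq X$ with $|F|=k$, the $k$-cover $\bigcup_{n\in A_k}\mathcal{V}_n$ contains some $U$ with $F\subseteq U$, and this $U$ lies in the total union $\bigcup_{n\in\omega}\mathcal{V}_n$; moreover $X$ is not a member of the union, since every element of a $\gamma_\mathfrak{B}$-cover is by definition a proper open subset of $X$. Hence the total union meets the definition of an $\omega$-cover and $X$ satisfies $\Sf(\Gamma_\mathfrak{B},\Omega)$.

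The only genuine point to verify is that a cover which contains an open $k$-cover for each $k$ is itself an $\omega$-cover, together with the bookkeeping needed to amalgamate the $k$-indexed selections into a single sequence $(\mathcal{V}_n)_{n\in\omega}$. Both are routine: the former is exactly the $k$-cover characterization of $\omega$-covers recalled above, and the latter works because the $A_k$ partition $\omega$, so each $\mathcal{V}_n$ is defined exactly once (empty selections being permitted in $\Sf$ cause no difficulty). I therefore expect no essential obstacle; the argument is the standard device of realizing an $\omega$-cover as an amalgamation of $k$-covers, one value of $k$ at a time, along a partition of the rounds.
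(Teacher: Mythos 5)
Your proof is correct. Note that the paper itself gives no proof of this lemma---it is quoted directly from \cite[Lemma 3.1]{tsaban24}---and your argument (the trivial direction via $\Omega\subseteq\mathcal{O}_k$, plus partitioning the rounds into infinitely many disjoint infinite blocks $A_k$, applying $\Sf(\Gamma_{\mathfrak{B}},\mathcal{O}_k)$ to the $k$-th block, and amalgamating the selections, with the needed observation that members of $\gamma_{\mathfrak{B}}$-covers are proper open subsets so that $X$ is not a member of the union) is precisely the standard proof from that source, transplanted to $\gamma_{\mathfrak{B}}$-covers.
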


\begin{Th}
\label{thm7}
Let $\bk$ be a bornology on a topological space $X$ with a closed base. Then $\Uf(\Gamma_\mathfrak{B}, \Gamma)$ implies $\Sf(\Gamma_\mathfrak{B}, \Omega)$.
\end{Th}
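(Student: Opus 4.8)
The plan is to reduce to $k$-covers via Lemma~\ref{lemma6} and then feed the hypothesis $\Uf(\Gamma_{\mathfrak{B}},\Gamma)$ into that framework. By Lemma~\ref{lemma6} it suffices to prove that $X$ satisfies $\Sf(\Gamma_{\mathfrak{B}},\mathcal{O}_k)$ for every fixed $k\in\omega$. So I would fix $k$ and a sequence $(\mathcal{U}_n)$ of $\gamma_{\mathfrak{B}}$-covers, writing without loss of generality $\mathcal{U}_n=\{U^n_m:m\in\omega\}$, and seek finite $\mathcal{V}_n\subseteq\mathcal{U}_n$ for which the total family $\bigcup_n\mathcal{V}_n$ is an open $k$-cover.

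First I would record the easy half. Since every $\gamma$-cover is an $\omega$-cover and hence a $k$-cover, $\Uf(\Gamma_{\mathfrak{B}},\Gamma)$ at once furnishes finite $\mathcal{V}_n\subseteq\mathcal{U}_n$ for which $\{\bigcup\mathcal{V}_n:n\in\omega\}$ is a $k$-cover (the degenerate alternative, that $\bigcup\mathcal{V}_n=X$ for some $n$, being disposed of separately). The catch is that this only tells us each $k$-element set $F$ is swallowed by a \emph{finite union} $\bigcup\mathcal{V}_n$ of selected sets, whereas membership of $\bigcup_n\mathcal{V}_n$ in $\mathcal{O}_k$ demands a \emph{single} selected set containing $F$.

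To manufacture single sets I would synchronise the covers before applying the hypothesis. Using Lemma~\ref{lemma202} I pass to the diagonalised $\gamma_{\mathfrak{B}}$-covers $\mathcal{W}_n=\{\cap_{i\in n+1}U^i_m:m\in\omega\}$; here a set of index $m$ in $\mathcal{W}_n$ is contained in $U^i_m$ for every $i\le n$, so control of one index propagates to all earlier covers simultaneously. Applying $\Uf(\Gamma_{\mathfrak{B}},\Gamma)$ to $(\mathcal{W}_n)$ yields finite selections whose unions form a $\gamma$-cover; I would then extract a strictly increasing sequence $n_0<n_1<\cdots$ along which these selections contribute genuinely new sets, exactly as in the proof of Theorem~\ref{thm2}, and spread the data back to single members $U^j_m$ of the original covers indexed by the blocks $(n_k,n_{k+1}]$. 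The essential use of $\fc\subseteq\mathfrak{B}$ enters here: for a fixed finite $F$ one has $F\in\mathfrak{B}$, so by the very definition of a $\gamma_{\mathfrak{B}}$-cover all but finitely many members of each $\mathcal{U}_n$ already contain $F$; thus a single member of sufficiently large index in any one block captures the whole of $F$.

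The hard part will be precisely this passage from union-covering to single-set covering when $k\ge 2$. Knowing $F\subseteq\bigcup\mathcal{V}_n$ never by itself isolates one member of $\mathcal{V}_n$ absorbing all of $F$, and the diagonalisation above only guarantees that the various points of $F$ lie in members sharing a common index, not that a single member contains $F$; moreover $\Uf$ controls only the unions, not the individual sets as in the $\Sf$-based argument of Theorem~\ref{thm2}. The role of $\Uf(\Gamma_{\mathfrak{B}},\Gamma)$ is to force the chosen indices to be large enough, in at least one block, for each prescribed $F$, and the crux is to arrange a single finite-per-level selection that achieves this \emph{simultaneously} for all $k$-element sets. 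I would resolve it by the Ramsey-type counting on the finitely many sets used in each block, combined with the cofinite-containment property of $\gamma_{\mathfrak{B}}$-covers, which is exactly the $k$-cover machinery underlying Lemma~\ref{lemma6}.
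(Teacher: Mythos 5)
Your reduction via Lemma~\ref{lemma6} and your diagnosis of the crux (a $k$-element set must be captured by a \emph{single} selected set, while $\Uf$ only controls unions) both match the paper, but the resolution you sketch has a genuine gap. A single application of $\Uf(\Gamma_\mathfrak{B},\Gamma)$ to the diagonalised covers $\mathcal{W}_n=\{\cap_{i\in n+1}U^i_m:m\in\omega\}$ yields finite $\mathcal{H}_n\subseteq\mathcal{W}_n$ such that each point lies in $\bigcup\mathcal{H}_n$ for all large $n$ --- but this carries \emph{no lower bound on the indices} $m$ of the sets chosen into $\mathcal{H}_n$. Your own mechanism for capturing $F$ requires exactly such a bound: since $F\in\mathfrak{B}$, one has $F\subseteq U^n_m$ for all $m\geq m_F(n)$, so the selection captures $F$ only if for some $n$ it contains a set of index at least $m_F(n)$; nothing in the construction forces this simultaneously for all (possibly uncountably many) $k$-element sets $F$, since a point can be covered by a selected set of very small index. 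Moreover, the block-extraction step you want to import ``exactly as in the proof of Theorem~\ref{thm2}'' does not transfer: that argument needs the individually selected sets to form a $\gamma$-cover, which is what an $\Sf$-hypothesis supplies, whereas here only the sequence of \emph{unions} $\bigcup\mathcal{H}_n$ is a $\gamma$-cover and the individual members carry no covering information. (Also, contrary to your parenthetical claim, the diagonalisation does not make the points of $F$ land in members ``sharing a common index''; within a fixed $\mathcal{W}_n$ different points are typically caught by different indices.) The unspecified ``Ramsey-type counting'' is precisely the missing content.

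The paper (following Tsaban's Theorem 3.2) closes this gap by converting coverage into index information \emph{by construction}, and by applying the hypothesis $k$ times rather than once. Inductively one builds increasing functions $g_0,\dotsc,g_k$: given $g_l$, one forms the intersection windows $W_i^{l,n}=\cap_{m=i}^{g_l(i)}V_m^n$ and the increasing $\gamma_\mathfrak{B}$-covers $H_m^{l,n}=\cup\{W_i^{l,n}: n\leq i,\ g_l(i)\leq m\}$, so that membership $x\in H_m^{l,n}$ \emph{encodes} a whole interval $[i,g_l(i)]$ of indices, with $g_l(i)\leq m$, on which $x\in V_j^n$. Applying $\Uf(\Gamma_\mathfrak{B},\Gamma)$ to these rigged covers produces $g_{l+1}$. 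Then, for a $k$-element set $F=\{x_0,\dotsc,x_{k-1}\}$, chaining the memberships $x_0\in H^{k-1,n_0}_{g_k(n_0)},\ x_1\in H^{k-2,n_1}_{g_{k-1}(n_1)},\dotsc$ produces nested index windows $[n_1,g_{k-1}(n_1)]\supseteq[n_2,g_{k-2}(n_2)]\supseteq\cdots\supseteq\{n_k\}$, whence all points of $F$ lie in the single set $V^{n_0}_{n_k}\subseteq U^{n_0}_{n_k}$ with $n_k\leq g_k(n_0)$, and $\{U^n_m:n\in\omega,\ m\leq g_k(n)\}$ is the desired selection. One application per point of $F$, with nesting, is what substitutes for your counting step; without it the argument does not go through.
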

\begin{proof}
We closely follow the techniques of the proof of \cite[Theorem 3.2]{tsaban24}. By Lemma~\ref{lemma6}, it is sufficient to show that $X$ satisfies $\Sf(\Gamma_\mathfrak{B}, \mathcal{O}_k)$ for each $k\in \omega$. Let $(\mathcal{U}_n)$ be a sequence of $\gamma_\mathfrak{B}$-covers of $X$ with each $\mathcal{U}_n = \{U_m^n : m\in \omega\}$. Define a sequence $(\mathcal{V}_n)$ of $\gamma_\mathfrak{B}$-covers of $X$ by $\mathcal{V}_n = \{V_m^n : m\in \omega\}$ where $V_m^n = U_m^0\cap U_m^1\cap \cdots \cap U_m^n$. Observe that $V_m^0 \supseteq V_m^1 \supseteq V_m^2 \supseteq \cdots$ for all $m\in \omega$.

Let $g_0(n)=n$ for all $n$. We now define increasing functions $g_1, \dotsc, g_k\in \omega^\omega$ using induction. Let $l<k$ and suppose that $g_l$ is defined. For $n,m,i\in\omega$ let $W_i^{l,n}=\cap_{m=i}^{g_l(i)}V_m^n$ and $H_m^{l,n}=\cup\{W_i^{l,n} : n\leq i,g_l(i)\leq m\}$. For each $l$ and $n$ the set $\{H_m^{l,n} : m\in \omega\}$ is an increasing $\mathcal{O}_\mathfrak{B}$-cover of $X$, i.e. it is an increasing $\gamma_\mathfrak{B}$-cover of $X$. Since $X$ satisfies $\Uf(\Gamma_\mathfrak{B}, \Gamma)$, there exists an increasing $g_{l+1} \in \omega^\omega$ such that $\{H_{g_{l+1}(n)}^{l,n} : n\in \omega\}$ is a $\gamma$-cover of $X$. This completes the inductive construction.

We claim that $\{U_m^n : n\in\omega, m\leq g_k(n)\}$ is a $k$-cover of $X$. Let $F= \{x_i : i\in k \}$ be a $k$-element subset of $X$. Then there exists a $N\in\omega$ such that $F \subseteq H_{g_{l+1}(n)}^{l,n}$ for all $l\in k$ and all $n\geq N$. Fix a $n_0\geq N$.

Since $x_0\in H_{g_k(n_0)}^{k-1,n_0}$, there exists $n_1$ with $n_0\leq n_1, g_{k-1}(n_1) \leq g_k(n_0)$ and $x_0\in W_{n_1}^{k-1,n_0} = \cap_{m=n_1}^{g_{k-1}(n_1)} V_m^{n_0}$.

Since $x_1\in H_{g_{k-1}(n_1)}^{k-2,n_1}$, there exists $n_2$ with $n_1\leq n_2, g_{k-2}(n_2) \leq g_{k-1}(n_1)$ and $x_1\in W_{n_2}^{k-2,n_1} = \cap_{m=n_2}^{g_{k-2}(n_2)} V_m^{n_1} \subseteq \cap_{m=n_2}^{g_{k-2}(n_2)} V_m^{n_0}$.

Since $x_{k-1}\in H_{g_1(n_{k-1})}^{0,n_{k-1}}$, there exists $n_k$ with $n_{k-1} \leq n_k = g_0(n_k) \leq g_1(n_{k-1})$ and $x_{k-1}\in W_{n_k}^{0,n_{k-1}} = V_{n_k}^{n_{k-1}} \subseteq V_{n_k}^{n_0}$.

Thus $F \subseteq V_{n_k}^{n_0} \subseteq U_{n_k}^{n_0}$ and $n_k\leq g_k(n_0)$. It follows that $\{U_m^n : n\in\omega, m\leq g_k(n)\}$ is a $k$-cover of $X$. Hence $X$ satisfies $\Sf(\Gamma_\mathfrak{B}, \mathcal{O}_k)$ for each $k\in\omega$.
\end{proof}

The following implication diagram (Figure~\ref{dig1}) summarizes the relationships established so far.
\begin{figure}[h!]
\begin{adjustbox}{max width=\textwidth, max height=\textheight, keepaspectratio, center}
\begin{tikzcd}[column sep=.55cm, row sep=.7cm, arrows={crossing over}]
&&&&& \Uf(\mathcal{O}_\mathfrak{B}, \mathcal{O}_\mathfrak{B}) \arrow[equal,dddddd] \arrow[rrrrrrd, bend left=10] &&&&&&&&\\
&& \Uf(\mathcal{O}_\mathfrak{B}, \Gamma_\mathfrak{B}) \arrow[equal,dddddd] \arrow[rrru] \arrow[rrrrrr] &&&&&& \Uf(\mathcal{O}_\mathfrak{B}, \Gamma) \arrow[equal,dddddd] \arrow[rrr] &&& \Uf(\mathcal{O}_\mathfrak{B}, \Omega) \arrow[equal,dddddd] \arrow[rrr] &&& \Uf(\mathcal{O}_\mathfrak{B}, \mathcal{O}) \arrow[equal,dddddd]\\
&&&& \Sf(\mathcal{O}_\mathfrak{B}, \mathcal{O}_\mathfrak{B}) \arrow[ruu] \arrow[rrrrrrd, bend left=10] \arrow[dddddd] &&&&&&&&&&\\
&\Sf(\mathcal{O}_\mathfrak{B}, \Gamma_\mathfrak{B}) \arrow[dddddd] \arrow[rrru] \arrow[ruu] \arrow[rrrrrr] &&&&&& \Sf(\mathcal{O}_\mathfrak{B}, \Gamma) \arrow[dddddd] \arrow[ruu] \arrow[rrr] &&& \Sf(\mathcal{O}_\mathfrak{B}, \Omega) \arrow[dddd] \arrow[rrr] \arrow[ruu] &&& \Sf(\mathcal{O}_\mathfrak{B}, \mathcal{O}) \arrow[equal,dddddd] \arrow[equal,ruu] &\\
&&& \S1(\mathcal{O}_\mathfrak{B}, \mathcal{O}_\mathfrak{B}) \arrow[ruu] \arrow[rrrrrrd, bend left=10] \arrow[dddddd] &&&&&&&&&&&\\
\S1(\mathcal{O}_\mathfrak{B}, \Gamma_\mathfrak{B}) \arrow[dddddd] \arrow[rrru] \arrow[equal,ruu] \arrow[rrrrrr] &&&&&& \S1(\mathcal{O}_\mathfrak{B}, \Gamma) \arrow[dddddd] \arrow[equal,ruu] \arrow[rrr] &&& \S1(\mathcal{O}_\mathfrak{B}, \Omega) \arrow[dddddd] \arrow[ruu] \arrow[rrr] &&& \S1(\mathcal{O}_\mathfrak{B}, \mathcal{O}) \arrow[dddddd] \arrow[ruu] &&\\
&&&&&\Uf(\Gamma_\mathfrak{B}, \mathcal{O}_\mathfrak{B}) \arrow[rrrrrrd, bend left=10] &&&&&&&&&\\
&&\Uf(\Gamma_\mathfrak{B}, \Gamma_\mathfrak{B}) \arrow[rrrrrr] \arrow[rrru] &&&&&& \Uf(\Gamma_\mathfrak{B}, \Gamma) \arrow[rr] && \Sf(\Gamma_\mathfrak{B}, \Omega) \arrow[r] & \Uf(\Gamma_\mathfrak{B}, \Omega) \arrow[rrr] &&& \Uf(\Gamma_\mathfrak{B}, \mathcal{O})\\
&&&&\Sf(\Gamma_\mathfrak{B}, \mathcal{O}_\mathfrak{B}) \arrow[ruu] \arrow[rrrrrru, bend right=10] &&&&&&&&&&\\
&\Sf(\Gamma_\mathfrak{B}, \Gamma_\mathfrak{B}) \arrow[ruu] \arrow[rrru] \arrow[rrrrrr] &&&&&& \Sf(\Gamma_\mathfrak{B}, \Gamma) \arrow[rrrrrr] \arrow[ruu] &&&&&& \Sf(\Gamma_\mathfrak{B}, \mathcal{O}) \arrow[equal,ruu] &\\
&&&\S1(\Gamma_\mathfrak{B}, \mathcal{O}_\mathfrak{B}) \arrow[ruu] \arrow[rrrrrrd, bend left=10] &&&&&&&&&&&\\
\S1(\Gamma_\mathfrak{B}, \Gamma_\mathfrak{B}) \arrow[rrrrrr] \arrow[rrru] \arrow[equal,ruu] &&&&&& \S1(\Gamma_\mathfrak{B}, \Gamma) \arrow[equal,ruu] \arrow[rrr] &&& \S1(\Gamma_\mathfrak{B}, \Omega) \arrow[rrr] \arrow[ruuuu] &&& \S1(\Gamma_\mathfrak{B}, \mathcal{O}) \arrow[ruu] &&\\
&&&&&\Uf(\Gamma, \mathcal{O}_\mathfrak{B}) \arrow[equal,ddddd] \arrow[uuuuuu] \arrow[rrrrrrd, bend left=10] &&&&&&&&&\\
&&\Uf(\Gamma, \Gamma_\mathfrak{B}) \arrow[equal,dddd] \arrow[rrrrrr] \arrow[uuuuuu] \arrow[rrru] &&&&&& \Uf(\mathcal{O}, \Gamma) \arrow[rr] \arrow[uuuuuu] && \Sf(\Gamma, \Omega) \arrow[uuuuuu] \arrow[r] & \Uf(\mathcal{O}, \Omega) \arrow[rr] \arrow[uuuuuu] && \Sf(\mathcal{O}, \mathcal{O}) \arrow[uuuu] & \\
&&&&\Sf(\Gamma, \mathcal{O}_\mathfrak{B}) \arrow[ruu] \arrow[rrrrrru, bend right=10] \arrow[uuuuuu] &&&&&&&&&&\\
&\Sf(\Gamma, \Gamma_\mathfrak{B}) \arrow[rrrrr] \arrow[rrru] \arrow[uuuuuu] \arrow[ruu] &&&&& \S1(\Gamma, \Gamma) \arrow[uuuu] \arrow[rruu] \arrow[rrr] &&& \S1(\Gamma, \Omega) \arrow[uuuu] \arrow[ruu] \arrow[rrr] &&& \S1(\Gamma, \mathcal{O}) \arrow[uuuu] \arrow[ruu] &&\\
&&&\S1(\Gamma, \mathcal{O}_\mathfrak{B}) \arrow[ruu] \arrow[rrrrrru, bend right=10] \arrow[uuuuuu] &&&&&&& \Sf(\Omega, \Omega) \arrow[uuu] &&&&\\
\S1(\Gamma, \Gamma_\mathfrak{B}) \arrow[rrru] \arrow[equal,ruu] \arrow[uuuuuu] & \Uf(\mathcal{O}, \Gamma_\mathfrak{B}) \arrow[equal,r] & \Uf(\Omega, \Gamma_\mathfrak{B}) \arrow[rr] && \Uf(\mathcal{O}, \mathcal{O}_\mathfrak{B}) \arrow[equal,r] & \Uf(\Omega, \mathcal{O}_\mathfrak{B}) & \S1(\Omega, \Gamma) \arrow[uu] \arrow[rrr] &&& \S1(\Omega, \Omega) \arrow[ru] \arrow[uu] \arrow[rrr] &&& \S1(\mathcal{O}, \mathcal{O}) \arrow[uu] &&
\end{tikzcd}
\end{adjustbox}
\caption{}
\label{dig1}
\end{figure}

\begin{Ex}
\label{SE1}
Let $X=\omega^\omega$ and $\bk$ be a bornology with a compact base on $X$. We show that $X$ does not satisfies $\Uf(\Gb,\ob)$. Assume the contrary. Let $\uc_n=\{U^n_k:k\in\omega\}$ for each $n$, where $U^n_k=\{f\in \omega^\omega:f(n)\leq k\}$. Clearly, $\{\uc_n:n\in \omega\}$ is a sequence of $\gb$-covers of $X$. Choose a finite subset $\vc_n$ of $\uc_n$ for each $n$ for which $\{\cup \vc_n:n\in \omega\}$ is an open $\bk$-cover of $X$. Define a function $g:\omega\to \omega$ by $g(n)=2\cdot\max\{k\in \nb:U^n_k\in \vc_n\}$. Take a $B\in \bk$ with $g\in B$. We have $B\subseteq \cup\vc_n$ for some $n$ and $g\in U^n_k$ for some $U^n_k\in \vc_n$. Consequently, $g(n)\leq k$ which is a contradiction. Hence  $X$ does not satisfies $\Uf(\Gb,\ob)$.
\end{Ex}

Note that the above bornological space does not satisfy $\Sf(\Gb,\ob)$. If $X$ is an $A_2$-space and $\bk=\mathcal{F}$, then $X$ satisfies $\S1(\Gb,\Gb)$ (see \cite[Theorem 2.10]{coc2}).

\subsection{Some game theoretic observations}
In this section we present certain observations on selection principles related to games.
\begin{Th}[\cite{am}]
\label{TG1}
Let $\bk$ be a bornology on a topological space $X$ with a closed base. The following assertions are equivalent.\\
\noindent$(1)$ $X$ satisfies $\Sf(\ob,\ob)$.\\
\noindent$(2)$ ONE does not have a winning strategy in $\Gf(\ob,\ob)$.
\end{Th}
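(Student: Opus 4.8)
The plan is to establish the two implications $(2)\Rightarrow(1)$ and $(1)\Rightarrow(2)$ separately, the first being a routine unraveling of definitions and the second carrying all the combinatorial weight.

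For $(2)\Rightarrow(1)$ I would argue by a direct translation. Suppose ONE has no winning strategy in $\Gf(\ob,\ob)$, and let $(\mathcal{U}_n)_{n\in\omega}$ be an arbitrary sequence of open $\mathfrak{B}$-covers of $X$. Define a strategy $\sigma$ for ONE by declaring that in round $n$ ONE plays $\mathcal{U}_n$, ignoring all of TWO's earlier responses; this is a legitimate strategy. Since ONE has no winning strategy, $\sigma$ is in particular not winning, so there is a $\sigma$-play $\langle\mathcal{U}_0,\vc_0,\mathcal{U}_1,\vc_1,\dots\rangle$ in which TWO wins, i.e.\ each $\vc_n$ is a finite subset of $\mathcal{U}_n$ and $\bigcup_{n\in\omega}\vc_n\in\ob$. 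This is exactly a witness for $\Sf(\ob,\ob)$, so $(1)$ holds.

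For $(1)\Rightarrow(2)$, the hard direction, assume that $X$ satisfies $\Sf(\ob,\ob)$ and fix an arbitrary strategy $\sigma$ for ONE; I would produce a single $\sigma$-play won by TWO, thereby showing $\sigma$ is not winning. The approach is to unfold $\sigma$ into a countably branching tree of legal positions indexed by $\omega^{<\omega}$: attach to the root ONE's opening cover $\mathcal{U}_{\langle\rangle}=\sigma(\langle\rangle)$, enumerate it, and let the successors of a node $s$ be indexed by the (countable) family of finite subsets of the enumerated cover at $s$, each successor recording the corresponding finite response of TWO. In this way each $s\in\omega^{<\omega}$ carries a legal position $\psi(s)$ together with the cover $\mathcal{U}_s=\sigma(\psi(s))\in\ob$ that ONE plays there. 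Crucially this is carried out by recursion on the length of $s$, so that every $\mathcal{U}_s$ is determined \emph{before} any finite subset of it is selected; this is precisely what breaks the circular dependency between ONE's adaptive moves and TWO's responses. Using a bijection $\omega\cong\omega^{<\omega}$ I would then list $(\mathcal{U}_s)_s$ as a single $\omega$-sequence of $\mathfrak{B}$-covers and apply $\Sf(\ob,\ob)$ to obtain finite selections $\mathcal{H}_s\subseteq\mathcal{U}_s$ with $\bigcup_s\mathcal{H}_s\in\ob$.

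The remaining and most delicate step is to thread these finite selections into one coherent branch $b\in\omega^\omega$, so that the induced moves $(\mathcal{H}_{b\restriction n})_{n}$ constitute TWO's responses in a single legal $\sigma$-play whose union is still an open $\mathfrak{B}$-cover. Here I would lean on the two basic lemmas: Lemma~\ref{lemma3}, which guarantees that the finite intersection-combination of the covers met along any finite path is again a $\mathfrak{B}$-cover (so the selection never runs dry on any fixed $B\in\mathfrak{B}$), and Lemma~\ref{lemma1}, which reduces membership in $\ob$ to the purely combinatorial requirement that every $B\in\mathfrak{B}$ be contained in infinitely many of the chosen sets. The bookkeeping must ensure that along the selected branch each $B\in\mathfrak{B}$ is caught cofinally often, and this is exactly where the strength of $\Sf(\ob,\ob)$ is consumed; it is the main obstacle of the argument. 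I expect the verification that the assembled branch yields $\bigcup_n\mathcal{H}_{b\restriction n}\in\ob$ to follow the classical strategy-tree scheme of Scheepers, with Lemmas~\ref{lemma1} and \ref{lemma3} supplying the bornological content. Since no $\mathfrak{B}$-Lindel\"of hypothesis is available, the reduction to a countable tree through the enumeration of ONE's covers is the point that will require the most care.
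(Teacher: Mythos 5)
Your direction $(2)\Rightarrow(1)$ is correct and standard: the predetermined strategy that plays $\uc_n$ at round $n$ regardless of TWO's responses is a legitimate strategy, and any play defeating it is precisely a selection witnessing $\Sf(\ob,\ob)$. (Note that the paper itself offers no proof of this theorem --- it is quoted from \cite{am} --- so your attempt must be measured against the argument that is actually needed, not against a printed proof.)

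The direction $(1)\Rightarrow(2)$ has a genuine gap, and it sits exactly at the step you defer to ``the classical strategy-tree scheme of Scheepers.'' First, the issue you single out as requiring the most care --- the absence of a $\bk$-Lindel\"{o}f hypothesis --- is in fact the easy part: applying $\Sf(\ob,\ob)$ to the constant sequence $(\uc,\uc,\dotsc)$ shows that every open $\bk$-cover has a countable subfamily which is still a $\bk$-cover, so ONE's moves may be assumed countable and your tree is countably branching essentially for free. Second, and fatally, your main step fails as described. If you apply $\Sf(\ob,\ob)$ \emph{once} to the family $(\uc_s)_{s\in\omega^{<\omega}}$ of all covers in the tree, obtaining finite $\mathcal{H}_s\subseteq\uc_s$ with $\bigcup_{s}\mathcal{H}_s\in\ob$, then there is no freedom left for ``threading'': in order for the moves $(\mathcal{H}_{b\restriction n})_n$ to be TWO's responses in a legal $\sigma$-play, the branch is forced --- $b\restriction (n+1)$ must be the successor of $b\restriction n$ labelled by the move $\mathcal{H}_{b\restriction n}$. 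TWO's union along this forced branch is $\bigcup_n \mathcal{H}_{b\restriction n}$, whereas the guarantee extracted from $\Sf$ concerns the union over \emph{all} nodes, almost all of which lie off the branch and contribute nothing to the play; nothing prevents the selection from placing every set that matters at nodes the forced branch never visits. Lemmas~\ref{lemma1} and~\ref{lemma3} cannot repair this, because the defect is not about recognizing $\bk$-covers but about the mismatch between a global selection and a branch-local requirement. Overcoming that mismatch is the actual content of the theorem (it is the bornological analogue of Hurewicz's theorem on the Menger game): the standard arguments, adapted to bornologies in \cite{am}, apply $\Sf$ not to the node covers themselves but to covers built from intersections taken along finite paths of the tree (an iterated form of Lemma~\ref{lemma3}), so that a single selected set certifies containment of a given $B$ simultaneously at every node of a path, and then build TWO's play by a fusion in which each finite move absorbs the finitely many pending requirements created so far, staggered so that every requirement is eventually met. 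That construction --- the place where $\Sf(\ob,\ob)$ is genuinely consumed --- is missing from your proposal; identifying the obstacle is not the same as overcoming it.
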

\begin{Th}[\cite{am}]
\label{TG2}
Let $\bk$ be a bornology on a topological space $X$ with a closed base. The following assertions are equivalent.\\
\noindent$(1)$ $X$ satisfies $\S1(\ob,\ob)$.\\
\noindent$(2)$ ONE does not have a winning strategy in $\G1(\ob,\ob)$.
\end{Th}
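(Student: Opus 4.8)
The plan is to prove the two implications separately. The equivalence is the $\S1$-analogue of Theorem~\ref{TG1}, so the argument runs parallel to the one for $\Sf(\ob,\ob)$, with the two standard combinatorial tools being Lemma~\ref{lemma1} (an infinite family lies in $\ob$ exactly when every $B\in\bk$ sits inside infinitely many of its members) and Lemma~\ref{lemma3} (a finite intersection construction over $\bk$-covers again yields a $\bk$-cover). First I would dispose of the easy implication $(2)\Rightarrow(1)$. Assume ONE has no winning strategy in $\G1(\ob,\ob)$ and let $\{\uc_n:n\in\omega\}$ be a sequence of open $\bk$-covers. Let ONE follow the blind strategy $\sigma$ that plays $\uc_n$ in round $n$ irrespective of TWO's earlier choices; this is a legitimate strategy. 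Since $\sigma$ cannot be winning, there is a $\sigma$-play that TWO wins, and in that play TWO produces sets $V_n\in\uc_n$ with $\{V_n:n\in\omega\}\in\ob$, which is precisely the selection demanded by $\S1(\ob,\ob)$.

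The substantial direction is $(1)\Rightarrow(2)$. Assume $\S1(\ob,\ob)$ and fix an arbitrary strategy $\sigma$ for ONE; I must exhibit a $\sigma$-play that TWO wins. I would build the tree of $\sigma$-plays indexed by the countable set $\omega^{<\omega}$: to the root assign ONE's opening move $\mathcal{C}_{\langle\rangle}=\sigma(\langle\rangle)$, and recursively, having enumerated $\mathcal{C}_s=\{U^s_m:m\in\omega\}$ (as elsewhere in the paper, each $\bk$-cover is written $\{U^s_m:m\in\omega\}$), let the child $s^\frown m$ code the position in which TWO has answered $U^s_m$ and set $\mathcal{C}_{s^\frown m}=\sigma(\text{position coded by }s^\frown m)$, again an open $\bk$-cover. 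Since $\omega^{<\omega}$ is countable, this yields a countable family of open $\bk$-covers. Next I would reindex this family as an $\omega$-sequence of rounds and, for each round, merge the finitely many covers attached to the relevant nodes into a single open $\bk$-cover by means of Lemma~\ref{lemma3}. Applying $\S1(\ob,\ob)$ to the resulting sequence produces a choice of one set per round whose collection is again an open $\bk$-cover; I would then decode this choice down a single branch of the tree to recover a legitimate $\sigma$-play $\mathcal{C}_{\langle\rangle},V_0,\mathcal{C}_1,V_1,\dots$, and verify via Lemma~\ref{lemma1} that every $B\in\bk$ lies in infinitely many $V_n$, so that $\{V_n:n\in\omega\}\in\ob$ and TWO wins. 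Hence $\sigma$ is not winning, and since $\sigma$ was arbitrary, ONE has no winning strategy.

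The main obstacle is the threading step. A bare application of $\S1(\ob,\ob)$ to the covers sitting at all nodes returns one set per node, but these sets lie along distinct branches and need not form a $\bk$-cover along any single legal play. The purpose of the intersection–merging supplied by Lemma~\ref{lemma3} is exactly to arrange each round's cover so that one selected set simultaneously serves all the finitely many partial plays active at that stage, permitting the selections to be decoded into a single branch while Lemma~\ref{lemma1} secures the $\bk$-cover property of TWO's moves. The delicate bookkeeping — fixing an enumeration of $\omega^{<\omega}$ compatible with the tree order and keeping every merged cover an honest open $\bk$-cover — is precisely where the argument for $\S1$ mirrors, \emph{mutatis mutandis}, the proof of Theorem~\ref{TG1} for $\Sf$; the only genuine change is that TWO now selects a single set per round rather than a finite subset.
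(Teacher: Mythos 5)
Your implication $(2)\Rightarrow(1)$ (the blind strategy that ignores TWO's moves) is correct and standard. Note, however, that this paper does not prove Theorem~\ref{TG2} at all: it is quoted from \cite{am}, as is Theorem~\ref{TG1}. So your proposal has to stand on its own, and its hard direction $(1)\Rightarrow(2)$ contains a genuine gap at exactly the point you label the ``threading step''; the intersection--merging via Lemma~\ref{lemma3} cannot close it.

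Concretely, two things go wrong. First, the tree of $\sigma$-plays is infinitely branching: after ONE's opening cover $\{U^{\langle\rangle}_m:m\in\omega\}$ there are infinitely many legal answers for TWO, so at every level infinitely many nodes are ``active'', while Lemma~\ref{lemma3} can only merge finitely many covers (an infinite intersection of open sets need not be open). Second, and fatally, suppose you repair the bookkeeping by enumerating $\omega^{<\omega}$ as $(s_n)$ and letting $\mathcal{W}_n$ consist of intersections of one member from each $\mathcal{C}_{s_i}$, $i\le n$. Applying $\S1(\ob,\ob)$ yields $W_n\in\mathcal{W}_n$ with $\{W_n:n\in\omega\}\in\ob$, and each $W_n$ decodes to one choice at each of $s_0,\dots,s_n$; but TWO's actual play follows a single branch, and at round $k$ TWO can realize the choice decoded from only one $W_{n_k}$. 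The play is therefore controlled by the subfamily $\{W_{n_k}:k\in\omega\}$ alone: for TWO to win, every $B\in\bk$ must lie in infinitely many $W_{n_k}$, whereas $\S1$ only guarantees that $\{n:B\subseteq W_n\}$ is infinite --- a set that may well be disjoint from $\{n_k:k\in\omega\}$. This is precisely the failure, for $\ob$, of the property that saves the paper's own tree argument in Theorem~\ref{thm8}: by Lemma~\ref{lemma201} every infinite subfamily of a $\gb$-cover is again a $\gb$-cover, but an infinite subfamily of a $\bk$-cover need not be a $\bk$-cover (with $\bk=\fc$, the family $\{(-n,n):n\ge 1\}\cup\{(0,n):n\ge 1\}$ is an $\omega$-cover of $\mathbb{R}$ whose infinite subfamily $\{(0,n):n\ge 1\}$ is not an $\omega$-cover). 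So your plan is in effect a port of the proof of Theorem~\ref{thm8} from $\Gb$ to $\ob$, which is exactly what cannot be done. Your closing remark that the only change from the $\Sf$ case is that ``TWO now selects a single set per round rather than a finite subset'' gets the difficulty backwards: it is the freedom to select finite sets that lets TWO serve several decoded commitments simultaneously in Hurewicz--Menger-type tree arguments, and removing it is what makes the $\S1$ statement hard. Indeed, for $\bk=\fc$ Theorem~\ref{TG2} is the classical assertion that $\S1(\Omega,\Omega)$ is equivalent to ONE having no winning strategy in $\G1(\Omega,\Omega)$, whose known proofs rest on Pawlikowski's theorem for the Rothberger game (or arguments of comparable depth); no tree-plus-merging proof is available, and the proof in \cite{am} is of that harder kind --- which is presumably why the present paper cites it rather than reproving it.
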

\begin{Th}
\label{thm8}
Let $\mathfrak{B}$ be a bornology on $X$. Then the following assertions are equivalent.
\begin{enumerate}[wide=0pt,label={\upshape(\arabic*)},leftmargin=*]
  \item $X$ satisfies $\S1(\Gamma_\mathfrak{B}, \Gamma_\mathfrak{B})$.
  \item ONE does not have a winning strategy in $\G1(\Gamma_\mathfrak{B}, \Gamma_\mathfrak{B})$ on $X$.
\end{enumerate}
\end{Th}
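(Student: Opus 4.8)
The plan is to prove the two implications separately; $(2)\Rightarrow(1)$ is routine, while $(1)\Rightarrow(2)$ carries the real content.

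For $(2)\Rightarrow(1)$, I would take an arbitrary sequence $(\mathcal{U}_n)$ of $\gamma_\mathfrak{B}$-covers and let ONE follow the \emph{blind} strategy that plays $\mathcal{U}_n$ in the $n$-th round, ignoring all of TWO's responses. This is a legitimate strategy in $\G1(\Gamma_\mathfrak{B},\Gamma_\mathfrak{B})$, so by hypothesis it is not winning; hence there is a play in which TWO wins, i.e.\ TWO chooses $V_n\in\mathcal{U}_n$ with $\{V_n:n\in\omega\}\in\Gamma_\mathfrak{B}$. This is precisely a witness for $\S1(\Gamma_\mathfrak{B},\Gamma_\mathfrak{B})$.

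For $(1)\Rightarrow(2)$, I would argue by contradiction: suppose $\sigma$ is a winning strategy for ONE. Using Lemma~\ref{lemma201} (infinite subfamilies of $\gamma_\mathfrak{B}$-covers are again $\gamma_\mathfrak{B}$-covers), I may assume every cover ONE plays is countable with a fixed enumeration, since replacing a move of ONE by a countable $\gamma_\mathfrak{B}$-subcover only restricts TWO's legal answers. I then build the strategy tree indexed by $\omega^{<\omega}$: to each node $s$ I attach the $\gamma_\mathfrak{B}$-cover $\mathcal{U}^s=\{U^s_m:m\in\omega\}$ that ONE plays after TWO has answered along $s$, so $\mathcal{U}^{\langle\rangle}=\sigma(\langle\rangle)$ and $\mathcal{U}^{s ^\frown \langle m\rangle}$ is ONE's reply to the additional move $U^s_m$. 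Fixing a bijective enumeration $\omega^{<\omega}=\{s_j:j\in\omega\}$ with $s_0=\langle\rangle$, the heart of the argument is to feed $\S1(\Gamma_\mathfrak{B},\Gamma_\mathfrak{B})$ a single sequence of \emph{diagonal-intersection} covers: for each $n$ put $\mathcal{T}_n=\{\,\bigcap_{i\le n}U^{s_i}_m:m\in\omega\,\}$, which is a $\gamma_\mathfrak{B}$-cover by Lemma~\ref{lemma202}. Applying $\S1(\Gamma_\mathfrak{B},\Gamma_\mathfrak{B})$ yields indices $m_n$ with $\{T_n:n\in\omega\}\in\Gamma_\mathfrak{B}$, where $T_n=\bigcap_{i\le n}U^{s_i}_{m_n}$. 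I now read this selection off node-by-node: at node $s_j$ let TWO answer with $U^{s_j}_{m_j}$, which routes the play to the child $s_j ^\frown \langle m_j\rangle$. Starting from $s_0=\langle\rangle$ this determines a single branch $t_0\subsetneq t_1\subsetneq\cdots$ with $t_n=s_{j_n}$, and by the definition of the $\mathcal{U}^s$ the resulting sequence of moves is exactly a legal $\sigma$-play. To see TWO wins it, note the containment $T_{j_n}\subseteq U^{s_{j_n}}_{m_{j_n}}$ (TWO's $n$-th move contains $T_{j_n}$): since $\{T_j:j\}$ is a $\gamma_\mathfrak{B}$-cover, each $B\in\mathfrak{B}$ lies in all but finitely many $T_j$, and as the $j_n$ are distinct, $B$ lies in all but finitely many $T_{j_n}$, hence in all but finitely many of TWO's moves. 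These moves are proper subsets and $\mathfrak{B}$ covers $X$, so the family has infinite range and is therefore a genuine $\gamma_\mathfrak{B}$-cover, contradicting that $\sigma$ is winning.

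The main obstacle, which dictates the whole construction, is that $\S1(\Gamma_\mathfrak{B},\Gamma_\mathfrak{B})$ delivers one choice per cover from a \emph{predetermined} sequence, whereas a win for TWO must be realized along a \emph{single branch} of the infinitely branching strategy tree whose nodes depend on the earlier choices. The device resolving this tension is the diagonal intersection of Lemma~\ref{lemma202}: because the behaviour of all the first $n{+}1$ enumerated nodes is recorded by one common index $m$, a single $\S1$-selection simultaneously dictates TWO's answer at every node, and the containment $T_{j_n}\subseteq$ (TWO's move) transfers the $\gamma_\mathfrak{B}$-property from the selected family to the branch. I expect the only delicate verification to be the bookkeeping showing that the node-by-node answers trace out one legal $\sigma$-play (the encoding $t_{n+1}=t_n ^\frown \langle m_{j_n}\rangle$) and that the branch family is infinite; the latter is forced, since a sequence of proper open sets with the $\gamma_\mathfrak{B}$-property cannot have finite range (no proper set can contain every $B\in\mathfrak{B}$ as $\bigcup\mathfrak{B}=X$).
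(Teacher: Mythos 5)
Your proposal is correct, and its overall architecture coincides with the paper's: reduce ONE's moves to countably enumerated $\gamma_\mathfrak{B}$-covers via Lemma~\ref{lemma201}, organize the strategy $\sigma$ into a tree indexed by $\omega^{<\omega}$, make a \emph{single} application of $\S1(\Gamma_\mathfrak{B},\Gamma_\mathfrak{B})$ to a countable family of covers read off the tree, and then trace one branch to produce a $\sigma$-play that TWO wins. The difference lies in the diagonalization device. The paper applies $\S1$ directly to the node covers $\{\mathcal{U}^s : s\in\omega^{<\omega}\}$, so the full selection $\{U_{s\frown(n_s)} : s\in\omega^{<\omega}\}$ is itself one $\gamma_\mathfrak{B}$-cover, and TWO's moves along the induced branch form an infinite subfamily of it, which is again a $\gamma_\mathfrak{B}$-cover by Lemma~\ref{lemma201}; no intersections are needed. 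You instead apply $\S1$ to the diagonal-intersection covers $\mathcal{T}_n$ of Lemma~\ref{lemma202} and transfer the $\gamma_\mathfrak{B}$-property to the branch through the containments $T_{j_n}\subseteq U^{s_{j_n}}_{m_{j_n}}$. The paper's route is shorter, but yours buys a cleaner treatment of one fine point: the paper guarantees infiniteness of TWO's final family by deleting the previous answer from ONE's next move, which as written only forces \emph{consecutive} moves to differ, whereas your properness argument (no proper open set can contain every $B\in\mathfrak{B}$ since $\bigcup\mathfrak{B}=X$) settles infiniteness outright. You also prove $(2)\Rightarrow(1)$ explicitly via the blind strategy, a step the paper omits entirely even though the theorem asserts an equivalence.
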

\begin{proof}
$(1) \Rightarrow (2)$. Let $\sigma$ be a strategy for ONE in $\G1(\Gamma_\mathfrak{B}, \Gamma_\mathfrak{B})$ on $X$. Let $\sigma (\emptyset) = \mathcal{U}$ be the first move of ONE. By Lemma~\ref{lemma201}, we can choose $\mathcal{U} = \{U_{(n)} : n\in \omega\}$. TWO responds by selecting a $U_{(n_0)} \in \{U_{(n)} : n\in \omega\}$. Let $\sigma (U_{(n_0)}) \setminus \{U_{(n_0)}\} = \{U_{(n_0, m)} : m\in \omega\}$ be the second move of ONE. TWO responds by choosing a $U_{(n_0, n_1)} \in \{U_{(n_0, m)} : m\in \omega\}$. Let $\sigma (U_{(n_0, n_1)}) \setminus \{U_{(n_0, n_1)}\} = \{U_{(n_0, n_1, m)} : m\in \omega\}$ be the third move of ONE and so on. Thus for each finite sequence $s$ in $\omega$, $\{U_{s \frown (m)} : m\in \omega\} \in \Gamma_\mathfrak{B}$. Since $X$ satisfies $\S1(\Gamma_\mathfrak{B}, \Gamma_\mathfrak{B})$, for each finite sequence $s$ in $\omega$ there exists a $n_s \in \omega$ such that $\{U_{s\frown (n_s)} : s \text{ is a finite sequence in } \omega\} \in \Gamma_\mathfrak{B}$. We now define a sequence $(n_k)$ in $\omega$ as $n_0 = n_\emptyset$, $n_1 = n_{(n_0)}$, $n_2 = n_{(n_0, n_1)}$ and so on. Then $U_{(n_0)}, U_{(n_0, n_1)}, \dotsc$ is the sequence moves of TWO. Since $\{U_{s \frown (m)} : m\in \omega\} \in \Gamma_\mathfrak{B}$, by Lemma~\ref{lemma201}, $\{U_{(n_0)}, U_{(n_0, n_1)}, \dotsc \}$ is also in $\Gamma_\mathfrak{B}$. Thus we get a $\sigma$-play $\{U_{(n)} : n\in \omega\}, U_{(n_0)}, \{U_{(n_0, m)} : m\in \omega\}, U_{(n_0, n_1)}, \{U_{(n_0, n_1, m)} : m\in \omega\}, \dotsc$ which is lost by ONE. It follows that $\sigma$ is not a winning strategy for ONE. Hence ONE does not have a winning strategy in $\G1(\Gamma_\mathfrak{B}, \Gamma_\mathfrak{B})$ on $X$.
\end{proof}

\begin{Th}
\label{thm9}
Let $\bk$ be a bornology on a topological space $X$ with a closed base. The following assertions are equivalent.\\
\noindent$(1)$ $X$ satisfies $\S1(\ob,\Gb)$.\\
\noindent$(2)$ ONE does not have a winning strategy in $\G1(\ob,\Gb)$.
\end{Th}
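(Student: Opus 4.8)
The argument follows the template of Theorem~\ref{thm8}. For $(2)\Rightarrow(1)$ I would argue routinely: given a sequence $(\uc_n)$ of open $\bk$-covers, let ONE use the positional strategy that puts down $\uc_n$ in round $n$, ignoring TWO's responses. Since this cannot be a winning strategy, some run of it is a loss for ONE, and the sets $V_n\in\uc_n$ chosen by TWO in that run form a $\gb$-cover, which is exactly the selection demanded by $\S1(\ob,\Gb)$. So the substance lies in $(1)\Rightarrow(2)$.

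Assume $(1)$ and fix a strategy $\sigma$ for ONE in $\G1(\ob,\Gb)$. I would first record a useful normalization: under $\S1(\ob,\Gb)$ every open $\bk$-cover is infinite, since otherwise the constant sequence at a finite open $\bk$-cover would force TWO's selection into a finite collection, which cannot be a $\gb$-cover. Consequently Lemma~\ref{lemma1} applies to each of ONE's moves. I then unfold $\sigma$ into a tree indexed by $\omega^{<\omega}$: list $\sigma(\emptyset)=\{U_{(m)}:m\in\omega\}$, and after TWO has played $U_{(n_0)},\dots,U_{(n_0,\dots,n_{k-1})}$ along a node $s$, list ONE's reply, with the finitely many sets already chosen by TWO deleted, as $\{U_{s\frown(m)}:m\in\omega\}$. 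Since a cofinite subset of a $\bk$-cover is again a $\bk$-cover, each such reply is still an open $\bk$-cover, and by the infiniteness just observed it is infinite; thus $\{U_{s\frown(m)}:m\in\omega\}\in\ob$ for every $s$.

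As $\omega^{<\omega}$ is countable, I would apply $\S1(\ob,\Gb)$ to the family $\{\{U_{s\frown(m)}:m\in\omega\}:s\in\omega^{<\omega}\}$ and obtain indices $n_s$ with $\{U_{s\frown(n_s)}:s\in\omega^{<\omega}\}\in\Gb$. Reading off the diagonal branch $n_0=n_\emptyset$, $n_1=n_{(n_0)}$, $n_2=n_{(n_0,n_1)}$, and so on, produces an actual run of the game in which TWO's moves are $U_{(n_0)},U_{(n_0,n_1)},U_{(n_0,n_1,n_2)},\dots$. Because at each node I deleted all of TWO's earlier choices, these branch sets are pairwise distinct, so the branch is an infinite subcollection of the $\gb$-cover $\{U_{s\frown(n_s)}:s\in\omega^{<\omega}\}$; Lemma~\ref{lemma201} then upgrades it to a $\gb$-cover in its own right. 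Hence TWO wins this run, $\sigma$ is not winning, and $(2)$ follows.

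The main obstacle is the gap between the selection principle, which commits to one set at \emph{every} node of the tree, and a single play of the game, which traverses only \emph{one} branch: the branch is a priori just a subcollection of the $\gb$-cover returned by $\S1(\ob,\Gb)$. The decisive fact is that the target class $\Gb$ is closed under infinite subcollections (Lemma~\ref{lemma201}), so the branch retains the $\gb$-cover property—this is what makes the diagonalization land back inside the game. The only bookkeeping to watch is that deleting TWO's earlier picks (permitted by the cofinite-subset property, and under $(1)$ harmless to infiniteness) simultaneously keeps ONE's replies in $\ob$ and forces the branch to be infinite, so that Lemma~\ref{lemma201} is genuinely applicable.
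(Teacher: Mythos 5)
Your overall architecture --- the strategy tree on $\omega^{<\omega}$, a single application of $\S1(\ob,\Gb)$ across all nodes, diagonalization along a branch, and Lemma~\ref{lemma201} to upgrade the (pairwise distinct, hence infinite) branch to a $\gb$-cover --- is exactly the template the paper uses for Theorem~\ref{thm8}, and the two points the paper leaves implicit there (deleting \emph{all} of TWO's earlier picks, and the routine direction $(2)\Rightarrow(1)$) are handled correctly in your write-up. There is, however, one genuine gap: the step ``list $\sigma(\emptyset)=\{U_{(m)}:m\in\omega\}$'' (and likewise at every node) silently assumes that ONE's moves can be enumerated by $\omega$. The theorem carries no Lindel\"{o}f-type hypothesis, so ONE is free to play an uncountable open $\bk$-cover, and then the tree cannot be indexed by $\omega^{<\omega}$ as written. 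Your normalization does not repair this: it gives that ONE's moves are infinite, and Lemma~\ref{lemma1} then justifies cofinite deletion, but neither gives countability. This is precisely where the $\ob$ case differs from the $\Gamma_{\mathfrak{B}}$ case of Theorem~\ref{thm8}: there the paper prunes ONE's move to a countably infinite subset, which Lemma~\ref{lemma201} guarantees is still a $\gamma_{\mathfrak{B}}$-cover, whereas $\ob$ is \emph{not} closed under passing to infinite (or countable) subsets, so no analogous pruning comes for free.

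The fix is short but requires invoking hypothesis $(1)$ once more, in a way your proposal does not: applying $\S1(\ob,\Gb)$ to the constant sequence at an open $\bk$-cover $\uc$ yields $V_n\in\uc$ with $\{V_n:n\in\omega\}\in\Gb$, so under $(1)$ every open $\bk$-cover contains a \emph{countable} $\gb$-subcover (equivalently, $X$ is a $\gb$-space; this is \cite[Proposition 5.4]{am}, used in the paper in Theorems~\ref{Ta2} and~\ref{Tf7}). Accordingly, at each node you should first prune ONE's reply to such a countable $\gb$-subcover, then delete TWO's finitely many earlier picks (the result is still a $\gb$-cover by Lemma~\ref{lemma201}, in particular still in $\ob$), and only then enumerate it as $\{U_{s\frown(m)}:m\in\omega\}$. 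Since every element of the pruned cover is a legal response to ONE's actual move, the diagonal branch remains a genuine run against $\sigma$, and the rest of your argument goes through unchanged.
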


\begin{Th}
\label{TG3}
Let $\bk$ be a bornology on a topological space $X$ with a closed base. The following assertions are equivalent.\\
\noindent$(1)$ $X$ satisfies $\Sf(\ob,\mathcal{O}^{gp}_{\bk})$.\\
\noindent$(2)$ ONE does not have a winning strategy in $\Gf(\ob,\mathcal{O}^{gp}_{\bk})$.
\end{Th}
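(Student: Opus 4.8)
The statement is a selection-principle-versus-game equivalence of the standard $\Sf$-type, so the plan is to handle the two implications separately, with $(2)\Rightarrow(1)$ being routine and $(1)\Rightarrow(2)$ carrying all the weight.

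For $(2)\Rightarrow(1)$ I would argue by the usual predetermined-strategy device. Given a sequence $(\uc_n)$ of open $\bk$-covers of $X$, define a strategy $\sigma$ for ONE that ignores TWO's responses and simply plays $\uc_n$ in round $n$. Since, by hypothesis, $\sigma$ is not a winning strategy, there is a $\sigma$-play lost by ONE; in that play TWO's moves are finite sets $\vc_n\subseteq\uc_n$ with $\bigcup_{n}\vc_n\in\mathcal{O}^{gp}_{\bk}$, which is exactly a witness that $X$ satisfies $\Sf(\ob,\mathcal{O}^{gp}_{\bk})$.

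For $(1)\Rightarrow(2)$ I would model the construction on the proofs of Theorem~\ref{TG1} and Theorem~\ref{thm8}. Fix a strategy $\sigma$ for ONE in $\Gf(\ob,\mathcal{O}^{gp}_{\bk})$ and organise the partial $\sigma$-plays into a tree indexed by finite sequences: the root carries ONE's first move $\sigma(\emptyset)\in\ob$, and the children of a node carry the covers $\sigma$ produces in response to the finite selections available to TWO at that node. Because TWO's moves are finite, and using Lemma~\ref{lemma1} to control membership in $\ob$, I would arrange this as a countably branching tree with countably many nodes. To pass from the tree to a single $\omega$-sequence of covers I would use the finite-intersection amalgamation already exploited in Lemmas~\ref{lemma3} and \ref{lemma2} (and in the proof of Theorem~\ref{thm2}): at stage $n$ form one open $\bk$-cover $\mathcal{W}_n$ out of ONE's covers at the level-$n$ nodes by taking diagonal finite intersections, so that a finite selection from $\mathcal{W}_n$ simultaneously encodes finite responses at all those nodes. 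Applying $\Sf(\ob,\mathcal{O}^{gp}_{\bk})$ to $(\mathcal{W}_n)$ then yields finite $\mathcal{H}_n\subseteq\mathcal{W}_n$ with $\bigcup_n\mathcal{H}_n\in\mathcal{O}^{gp}_{\bk}$.

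The final and hardest step is to decode this output into an actual $\sigma$-play lost by ONE. I would project the members of the $\mathcal{H}_n$ back to finite subsets of ONE's node-covers, single out one branch $b$ of the tree along which these projections occur as successive TWO-responses, and check that the resulting sequence of moves is a legitimate $\sigma$-play. The main obstacle is the groupability bookkeeping: $\Sf(\ob,\mathcal{O}^{gp}_{\bk})$ only guarantees \emph{some} grouping of $\bigcup_n\mathcal{H}_n$ into pairwise disjoint finite blocks with the $\bk$-tail property, whereas TWO wins only when the grouping is realised round-by-round along $b$. I expect to resolve this by exploiting that $\bk$-groupability is preserved under merging finitely many consecutive blocks (each $\mathcal{H}_n$ being finite), so that the blocks can be aligned with the rounds of the single branch; this alignment, together with Lemma~\ref{lemma4} to keep the decoded selections cofinal enough to retain the $\bk$-cover property, is where the real work lies. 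Once TWO is shown to win along $b$, $\sigma$ is not winning, and since $\sigma$ was arbitrary, ONE has no winning strategy in $\Gf(\ob,\mathcal{O}^{gp}_{\bk})$.
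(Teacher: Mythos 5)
Your direction $(2)\Rightarrow(1)$ is the standard predetermined-strategy argument and is fine (the paper omits it as routine). The problem is $(1)\Rightarrow(2)$: there you attempt a from-scratch tree analysis of an arbitrary strategy $\sigma$ of ONE, and you leave the decisive step unproved — you only say you ``expect to resolve'' the passage from the selected families $\mathcal{H}_n$ back to an actual $\sigma$-play won by TWO. That decoding step is exactly where such arguments live or die, so as written this is a genuine gap, not a routine verification. Moreover, the obstacle you identify rests on a misreading of the rules: in $\Gf(\ac,\bc)$ TWO wins a play iff $\bigcup_{n\in\omega}\vc_n\in\bc$, so for $\bc=\mathcal{O}^{gp}_{\bk}$ TWO only needs the union of the selections to admit \emph{some} partition into pairwise disjoint finite groups with the $\bk$-tail property; nothing requires the grouping to be ``realised round-by-round along the branch''. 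Chasing that spurious alignment requirement is what makes your sketch stall. Two smaller unaddressed points: to make your tree countably branching you must first replace ONE's covers by countable $\bk$-subcovers, which needs $\bk$-Lindel\"{o}fness (it does follow from $(1)$, but not from Lemma~\ref{lemma1}, which is what you invoke), and Lemma~\ref{lemma4} plays no evident role in the decoding.

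The paper's proof avoids all of this machinery by a reduction you missed. From $(1)$ one trivially gets $\Sf(\ob,\ob)$, since every member of $\mathcal{O}^{gp}_{\bk}$ is in particular an open $\bk$-cover; hence, by Theorem~\ref{TG1}, ONE has no winning strategy in $\Gf(\ob,\ob)$. Next, applying $(1)$ to constant sequences shows that every countable open $\bk$-cover of $X$ is $\bk$-groupable: the selection produces a $\bk$-groupable subfamily, and the countably many remaining sets can be absorbed one per group without destroying disjointness, finiteness, or the tail property. Finally, the games $\Gf(\ob,\ob)$ and $\Gf(\ob,\mathcal{O}^{gp}_{\bk})$ have identical legal moves and differ only in the winning condition, and the union of TWO's moves in any play is a countable family (indeed infinite, since no finite family can be an open $\bk$-cover); so every play lost by ONE in $\Gf(\ob,\ob)$ is automatically lost by ONE in $\Gf(\ob,\mathcal{O}^{gp}_{\bk})$, and $(2)$ follows. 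Note that the same observation short-circuits your own difficulty: even within a tree argument it suffices to produce a play whose union lies in $\ob$, because groupability of that union then comes for free from $(1)$ — which is why redoing the tree construction amounts to reproving Theorem~\ref{TG1} rather than anything new.
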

\begin{proof}
$(1)\Rightarrow (2)$. Clearly, $X$ satisfies $\S1(\ob,\ob)$. By Theorem \ref{TG1}, ONE does not have a winning strategy in $\Gf(\ob,\ob)$. Also from $(1)$ we obtain that every open $\bk$-cover of $X$ is $\bk$-groupable. Hence ONE does not have a winning strategy in $\Gf(\ob,\mathcal{O}^{gp}_{\bk})$.
\end{proof}
Similarly, we obtain the following.
\begin{Th}
\label{TG4}
Let $\bk$ be a bornology on a topological space $X$ with a closed base. The following assertions are equivalent.\\
\noindent$(1)$ $X$ satisfies $\S1(\ob,\mathcal{O}^{gp}_{\bk})$.\\
\noindent$(2)$ ONE does not have a winning strategy in $\G1(\ob,\mathcal{O}^{gp}_{\bk})$.
\end{Th}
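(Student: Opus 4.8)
The plan is to follow the proof of Theorem~\ref{TG3} verbatim, with $\Sf$, $\Gf$ and Theorem~\ref{TG1} replaced by their $\S1$, $\G1$ and Theorem~\ref{TG2} counterparts. The implication $(2)\Rightarrow(1)$ is the routine half and needs no groupability: should $\S1(\ob,\mathcal{O}^{gp}_{\bk})$ fail, there is a sequence of open $\bk$-covers witnessing this, and ONE wins $\G1(\ob,\mathcal{O}^{gp}_{\bk})$ by playing this sequence regardless of TWO's responses; contraposition yields $(2)\Rightarrow(1)$.

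For $(1)\Rightarrow(2)$ I would argue in two steps. Since every $\bk$-groupable cover is in particular an open $\bk$-cover, one has $\mathcal{O}^{gp}_{\bk}\subseteq\ob$, so $(1)$ at once gives $\S1(\ob,\ob)$, and Theorem~\ref{TG2} then tells us that ONE has no winning strategy in $\G1(\ob,\ob)$. The second step is to transfer this to the game with target $\mathcal{O}^{gp}_{\bk}$. Granting the claim that $(1)$ forces every countable open $\bk$-cover to be $\bk$-groupable, any strategy $\sigma$ for ONE in $\G1(\ob,\mathcal{O}^{gp}_{\bk})$ is literally a strategy in $\G1(\ob,\ob)$; by the first step some $\sigma$-play has TWO's selections forming a member of $\ob$, and since that member is a countable open $\bk$-cover it is $\bk$-groupable, so the very same play is won by TWO in $\G1(\ob,\mathcal{O}^{gp}_{\bk})$. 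Hence $\sigma$ is not winning, and ONE has no winning strategy there.

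The step demanding the most care is the claim used above. For a fixed $\uc=\{U_m:m\in\omega\}\in\ob$ I would apply $\S1(\ob,\mathcal{O}^{gp}_{\bk})$ to the constant sequence $\uc_n=\uc$, producing a $\bk$-groupable subcover $\vc=\cup_k\vc_k\subseteq\uc$ whose finite pairwise disjoint pieces $\vc_k$ already contain, for each $B\in\bk$, a member covering $B$ for all large $k$. Re-grouping $\uc$ by absorbing the leftover members of $\uc\setminus\vc$ one at a time into successive pieces (or, when the leftover is finite, all at once into $\vc_0$) keeps each piece finite and pairwise disjoint and leaves the covering witnesses untouched, so $\uc\in\mathcal{O}^{gp}_{\bk}$. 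The only delicate point is that membership in $\mathcal{O}^{gp}_{\bk}$ presupposes a countable cover, which is exactly why the transfer in the preceding paragraph is applied to TWO's outputs, these being countable by construction.
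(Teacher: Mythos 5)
Your proof is correct and follows essentially the same route as the paper's (which proves Theorem~\ref{TG3} and declares \ref{TG4} analogous): pass from $\S1(\ob,\mathcal{O}^{gp}_{\bk})$ to $\S1(\ob,\ob)$, invoke Theorem~\ref{TG2} to rule out a winning strategy for ONE in $\G1(\ob,\ob)$, and then transfer via the fact that $(1)$ makes open $\bk$-covers $\bk$-groupable. In fact you are more careful than the paper: the paper baldly asserts that ``every open $\bk$-cover is $\bk$-groupable,'' which literally can hold only for countable covers, whereas you supply the constant-sequence plus re-grouping argument and correctly restrict the claim to countable covers, exactly what the game transfer needs since TWO's outputs are countable.
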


We now present some results on the $\alpha_i$ properties of $X$.

\begin{Th}
\label{Ta1}
Let $\bk$ be a bornology on a topological space $X$ with closed base. The  following assertions are equivalent.\\
\noindent$(1)$ $X$ satisfies $\alpha_2(\Gb,\bfGb)$.\\
\noindent$(2)$ $X$ satisfies $\alpha_3(\Gb,\bfGb)$.\\
\noindent$(3)$ $X$ satisfies $\alpha_4(\Gb,\bfGb)$.\\
\noindent$(4)$ $X$ satisfies $\S1(\Gb,\bfGb)$.\\
\noindent$(5)$ ONE has no winning strategy in $\G1(\Gb,\bfGb)$.\\
\noindent$(6)$ $X$ satisfies $\bigcap_\infty(\Gb,\bfGb)$.
\end{Th}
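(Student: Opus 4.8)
The plan is to run the cycle $(4)\Rightarrow(1)\Rightarrow(2)\Rightarrow(3)\Rightarrow(4)$ and to attach $(5)$ and $(6)$ to $(4)$ through the separate equivalences $(4)\Leftrightarrow(5)$ and $(4)\Leftrightarrow(6)$. Two soft facts will be used throughout, so I would record them first: every infinite subfamily of a $\bk$-sequence is again a $\bk$-sequence (the argument is that of Lemma~\ref{lemma201}), and any family of \emph{proper} subsets of $X$ that eventually contains every $B\in\bk$ is automatically infinite (were it finite, a finite set $B$ meeting the complement of each member could never be absorbed). The implications $(1)\Rightarrow(2)\Rightarrow(3)$ are immediate from the definitions of $\alpha_2,\alpha_3,\alpha_4$.

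For $(4)\Rightarrow(1)$ I would injectively enumerate the given $\gb$-covers as $\uc_n=\{U^n_m:m\in\omega\}$ and feed $\S1(\Gb,\bfGb)$ the doubly indexed family of tails $\uc_n^{(k)}=\{U^n_m:m\ge k\}$ (each a $\gb$-cover by Lemma~\ref{lemma201}); the resulting selection $\vc$ lies in $\bfGb$, and since the choice out of $\uc_n^{(k)}$ has index $\ge k$, it meets each $\uc_n$ in infinitely many distinct members, which is $\alpha_2$. The equivalence $(4)\Leftrightarrow(6)$ is similar in flavour: for $(6)\Rightarrow(4)$ one picks a single set $V_n$ from each infinite $\vc_n$ and uses $V_n\supseteq\bigcap\vc_n$ together with $\{\bigcap\vc_n:n\}\in\bfGb$; for $(4)\Rightarrow(6)$ I pass to the decreasing diagonal covers $\mathcal{W}_n=\{W^n_m:m\}$ with $W^n_m=\bigcap_{i\le n}U^i_m$ (which are $\gb$-covers by Lemma~\ref{lemma202}), select $W^n_{m_n}$ with $m_n\ge n$ via $\S1$, and set $\vc_n=\{U^n_{m_k}:k\ge n\}$; then $\bigcap\vc_n\supseteq\bigcap_{k\ge n}W^k_{m_k}$ absorbs every $B\in\bk$. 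The equivalence $(4)\Leftrightarrow(5)$ I would treat exactly as Theorem~\ref{thm8}: for $(5)\Rightarrow(4)$ the constant strategy in which ONE plays $\uc_n$ in round $n$ cannot be winning, and a play defeating it is precisely an $\S1$-selection; for $(4)\Rightarrow(5)$ I build the strategy tree, apply $\S1(\Gb,\bfGb)$ to the countably many responses $\{U_{s\frown(m)}:m\}$ indexed by finite sequences $s$, and read off a branch, which is an infinite subfamily of the resulting $\bk$-sequence and hence again lies in $\bfGb$.

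The main obstacle is $(3)\Rightarrow(4)$, that is, $\alpha_4(\Gb,\bfGb)\Rightarrow\S1(\Gb,\bfGb)$, because $\alpha_4$ only promises a single witness $\vc\in\bfGb$ meeting infinitely many of the given covers at least once, whereas $\S1$ must select from every cover. The route I would take is as follows. Replace the $\uc_n$ by the decreasing diagonal covers $\mathcal{W}_n=\{W^n_m:m\}$, $W^n_m=\bigcap_{i\le n}U^i_m$, so that any $\bk$-sequence selection $S_j\in\mathcal{W}_j$ lifts to an $\S1$-selection of $\uc_j$ through the superset $U^j_m\supseteq W^j_m$. The crucial preparation is to thin each $\mathcal{W}_n$ to an injective tail $\mathcal{W}_n'$ and to fix the thresholds by a diagonal argument so that each open set lies in only finitely many of the $\mathcal{W}_n'$; then selecting one member from each $\mathcal{W}_n'$ must run through infinitely many \emph{distinct} sets. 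Applying $\alpha_4$ to $\{\mathcal{W}_n'\}$ produces $\vc\in\bfGb$ and an infinite $S\subseteq\omega$ with hits $W^n_{m_n}\in\vc$ for $n\in S$; by the thinning these hits are infinitely many distinct members of $\vc$, hence an infinite subfamily of the $\bk$-sequence $\vc$, and therefore eventually contain every $B\in\bk$. Finally I fill the gaps: for each $j$ put $n(j)=\min\{n\in S:n\ge j\}$ and $S_j=W^j_{m_{n(j)}}$; the nesting gives $S_j\supseteq W^{n(j)}_{m_{n(j)}}\in\vc$, so $\{S_j:j\in\omega\}$ inherits the absorption of the hits, lies in $\bfGb$, and lifts via $U^j_{m_{n(j)}}$ to the desired $\S1$-selection.

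The delicate point is exactly this diagonal thinning that forces the $\alpha_4$-hits to be distinct, and hence cofinal in the witnessing $\bk$-sequence $\vc$. Without it the hits could pile up on finitely many proper sets, which (being proper) fail to absorb all of $\bk$, and the gap-filling would break down; so getting the thresholds right, together with the injective reindexing of the $\mathcal{W}_n$ while keeping the termwise nesting $W^j_m\supseteq W^n_m$ for $j\le n$ needed in the lift, is where I expect the real work to lie.
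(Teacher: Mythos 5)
Your proposal is correct, but it is organized quite differently from the paper's proof. The paper does not prove the equivalences $(1)$--$(4)$ at all: it cites \cite[Theorem 3.5]{mk} for them, derives $(4)\Leftrightarrow(5)$ from Theorem~\ref{thm8} (exactly as you do), and then attaches $(6)$ to the chain through $(1)$ rather than through $(4)$: from $\alpha_2(\Gb,\bfGb)$ it takes a single witness $\wc\in\bfGb$ meeting every $\uc_n$ in the infinite set $\wc_n=\uc_n\cap\wc$ and checks that $\{\bigcap\wc_n:n\in\omega\}\in\bfGb$ (a $B_0$ escaping infinitely many of the intersections would escape infinitely many members of $\wc$), while for the converse it shows that the union $\wc=\bigcup_n\wc_n$ of the witnessing subfamilies from $\bigcap_\infty$ is itself in $\bfGb$. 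Your anchoring of $(6)$ at $(4)$ instead --- one selection from each $\vc_n$ for $(6)\Rightarrow(4)$, decreasing diagonal covers with tail selections for $(4)\Rightarrow(6)$ --- is equally valid and of comparable length. The substantive difference is that you reconstruct the $\alpha_i$ equivalences from scratch: your $(3)\Rightarrow(4)$, with the diagonal covers $W^n_m=\bigcap_{i\le n}U^i_m$, the thinning that forces the $\alpha_4$-hits to be pairwise distinct, and the gap-filling via the termwise nesting, is precisely the content the paper outsources to the citation. What the paper's route buys is brevity; what yours buys is a self-contained proof relative to this paper.

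On the one step you flag as ``the real work'': it is indeed achievable, and by the diagonal argument you name, so there is no gap in principle. Only countably many open sets occur in $\bigcup_n\mathcal{W}_n$; each $\mathcal{W}_n$ is a $\gb$-cover (Lemma~\ref{lemma202}), hence has infinitely many distinct members, so an injective enumeration exists, and one may choose thresholds $t_n$ exceeding, for each of the first $n$ sets in a fixed enumeration of $\bigcup_n\mathcal{W}_n$, the position of that set in the injective enumeration of $\mathcal{W}_n$; then the $k$-th set can belong to $\mathcal{W}_n'$ only for $n<k$, which is exactly the finiteness you need to make the hits distinct, and distinct hits form an infinite subfamily of $\vc$, hence lie in $\bfGb$ by the analogue of Lemma~\ref{lemma201}. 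Since thinning only discards members while remembering an original index $m$ for each surviving set, the nesting $W^j_m\supseteq W^n_m$ for $j\le n$ survives and your lift goes through. Two small points of hygiene: in $(4)\Rightarrow(6)$ the requirement $m_n\ge n$ should be secured by applying $\S1$ to the tails $\{W^n_m:m\ge n\}$ (still $\gb$-covers by Lemma~\ref{lemma201}), the same device as in your $(4)\Rightarrow(1)$; and your second ``soft fact'' (a family of proper open sets absorbing every $B\in\bk$ is infinite) is doing real work in several places --- it deserves to be stated as a lemma, since it is what guarantees that the selected families are genuinely members of $\bfGb$.
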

\begin{proof}
The proofs of the equivalences of $(1)$ to $(4)$ are similar to the proof of \cite[Theorem 3.5]{mk} and $(4)\Leftrightarrow (5)$ follows from  Theorem \ref{thm8}. We now prove $(1)\Leftrightarrow (6)$.

$(1)\Rightarrow (6)$. Let $\{\uc_n:n\in \omega\}$ be a sequence of $\gb$-covers of $X$.
Applying $\alpha_2(\Gb,\bfGb)$, choose a $\wc\in \bfGb$, $\wc=\{W_n:n\in \omega\}$ for which $\uc_n\cap \wc$ is infinite for each $n$. Choose $\wc_n=\uc_n\cap \wc$. We prove that $\{\cap \wc_n:n\in \omega\}\in \bfGb$. We assume the contrary. Then there exists $B_0\in \bk$ for which $B_0\nsubseteq \cap \wc_n$ for infinitely many $n$. Consequently, $B_0\nsubseteq W_n$ for infinitely many $n\in \omega$. This is a contradiction as $\wc\in \bfGb$.
Therefore $\{\cap \wc_n:n\in \omega\}\in \bfGb$. Hence $(6)$ holds.

$(6)\Rightarrow (1)$. Let $\{\uc_n:n\in \omega\}$ be a sequence of $\gb$-covers of $X$. By $(2)$, choose an infinite subset $\wc_n$ of $\uc_n$ for each $n$ for which $\{\cap \wc_n:n\in \omega\}\in \bfGb$. Choose $\wc=\cup_{n\in \omega} \wc_n$. Then for each $n$ $\uc_n\cap \wc$ is infinite.
Enumerate $\wc_n$ as $\{W^n_k:k\in \omega\}$ and so $\wc=\{W^n_k:n,k\in \omega\}$. We prove that $\wc\in \bfGb$. Let $B\in \bk$. Choose $n_0$ for which $B\subseteq \cap \wc_n$ for all $n\geq n_0$. So $B\subseteq W^n_k$ for all $k\in \omega$ and for all $n\geq n_0$. Fix a $n$ with $n\geq n_0$. Then $B\subseteq W^n_k$ for all $k\in \omega$. Note that each $\wc_n$ being an infinite subset of $\uc_n$ is a $\gb$-cover. There exists a finite subset $A_i$ for each $i$, $1\leq i< n_0$ such that $B\subseteq W^i_k$ for all $k\in \omega\setminus A_i$. Choose $A=\cup_{1\leq i<n_0}A_i$. We now have $B\subseteq W^n_k$ for all $k\in \omega\setminus A$ and for all $n\in \omega$. Therefore $\wc\in \bfGb$. Hence $(1)$ holds.
\end{proof}

\begin{Th}
\label{Ta2}
Let $\bk$ be a bornology on a topological space $X$ with closed base. The  following assertions are equivalent.\\
\noindent$(1)$ $X$ satisfies $\alpha_2(\ob,\bfGb)$.\\
\noindent$(2)$ $X$ satisfies $\alpha_3(\ob,\bfGb)$.\\
\noindent$(3)$ $X$ satisfies $\alpha_4(\ob,\bfGb)$.\\
\noindent$(4)$ $X$ satisfies $\S1(\ob,\bfGb)$.\\
\noindent$(5)$ ONE has no winning strategy in $\G1(\ob,\bfGb)$.\\
\noindent$(6)$ $X$ satisfies $\bigcap_\infty(\ob,\bfGb)$.
\end{Th}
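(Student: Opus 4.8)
The plan is to mirror the scheme of Theorem~\ref{Ta1}: dispatch the equivalences $(1)$--$(5)$ by machinery that is already available and treat $(1)\Leftrightarrow(6)$ as the genuinely new content. The implications $(1)\Rightarrow(2)\Rightarrow(3)$ are immediate, since one and the same $\wc\in\bfGb$ produced by $\alpha_2(\ob,\bfGb)$ also witnesses $\alpha_3(\ob,\bfGb)$ and $\alpha_4(\ob,\bfGb)$. For the rest of $(1)$--$(4)$ I would follow \cite[Theorem 3.5]{mk} essentially verbatim; the single structural input needed is Lemma~\ref{lemma1}, which guarantees that deleting finitely many members from an open $\bk$-cover leaves an open $\bk$-cover, so that the copies of a cover can be thinned during the selection. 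The game equivalence $(4)\Leftrightarrow(5)$ I would obtain by the strategy-pruning argument of Theorem~\ref{thm8} (cf. Theorem~\ref{thm9}), with the moves of ONE (now open $\bk$-covers) thinned via Lemma~\ref{lemma1} and the target class used through the elementary observation that every infinite subfamily of a $\bk$-sequence is again a $\bk$-sequence.

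For $(1)\Rightarrow(6)$ I would deliberately avoid the intersection $\wc_n=\uc_n\cap\wc$ used in Theorem~\ref{Ta1}: in the present setting $\uc_n\cap\wc$ may repeatedly contain a member of $\wc$ that omits a fixed $B$, so $\cap\wc_n$ need not eventually contain $B$. Instead I would pass to a tail. Apply $\alpha_2(\ob,\bfGb)$ to the given sequence $\{\uc_n:n\in\omega\}$ in $\ob$ to obtain $\wc=\{W_m:m\in\omega\}\in\bfGb$, enumerated so that for each $B\in\bk$ one has $B\subseteq W_m$ for all large $m$, with $\uc_n\cap\wc$ infinite for every $n$. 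Set $\vc_n=\{W_m\in\uc_n:m\geq n\}$, an infinite subset of $\uc_n$. If $B\in\bk$ has threshold $m_B$, then for $n\geq m_B$ every member of $\vc_n$ has index $\geq n\geq m_B$ and hence contains $B$; thus $B\subseteq\cap\vc_n$ for all $n\geq m_B$. Because every singleton lies in $\bk$, each point of $X$ is eventually in $\cap\vc_n$, so any set attained by $\cap\vc_n$ for infinitely many $n$ equals $X$; as each $\cap\vc_n$ is a proper subset (members of $\uc_n$ are proper), the family $\{\cap\vc_n:n\in\omega\}$ must be infinite, and by the containment established above it lies in $\bfGb$. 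This yields $\bigcap_\infty(\ob,\bfGb)$.

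For the converse I expect the main obstacle: unlike $\gamma_\bk$-covers (Lemma~\ref{lemma201}), an infinite subfamily of an open $\bk$-cover need not be an open $\bk$-cover, so the direct argument of Theorem~\ref{Ta1} breaks down --- forming $\cup_n\vc_n$ can introduce infinitely many sets omitting some $B$ through the finitely many ``early'' indices. I would therefore route $(6)\Rightarrow(4)$ instead. Applying $\bigcap_\infty(\ob,\bfGb)$ to $\{\uc_n\}$ gives infinite $\vc_n\subseteq\uc_n$ with $\{\cap\vc_n:n\in\omega\}\in\bfGb$; choosing $V_n\in\vc_n$ pairwise distinct (possible as $\vc_n$ is infinite) and using $\cap\vc_n\subseteq V_n$, one checks that for each $B\in\bk$ one has $B\subseteq V_n$ for all large $n$, so $\{V_n:n\in\omega\}\in\bfGb$. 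Thus $V_n\in\uc_n$ witnesses $\S1(\ob,\bfGb)$, that is, $(4)$.

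Once $(1)$--$(5)$ are known equivalent and $(1)\Rightarrow(6)\Rightarrow(4)$ is established, all six statements fall into a single equivalence class, completing the proof. The crux throughout is the absence of an $\ob$-analogue of Lemma~\ref{lemma201}; the tail refinement in $(1)\Rightarrow(6)$ and the detour through $\S1(\ob,\bfGb)$ in $(6)\Rightarrow(1)$ are precisely the devices that compensate for it.
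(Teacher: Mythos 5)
Your proposal is correct, and while the skeleton matches the paper's (equivalences (1)--(4) deferred to the corresponding theorem of \cite{mk}, and (4)$\Leftrightarrow$(5) obtained from the game-theoretic result, Theorem~\ref{thm9}), the treatment of assertion (6) is genuinely different. The paper proves (4)$\Rightarrow$(6): from $\S1(\ob,\bfGb)$ it first extracts, via \cite[Proposition 5.4]{am}, a subset $\vc_n\subseteq\uc_n$ that is a $\gb$-cover, notes that $\S1(\ob,\bfGb)$ implies $\S1(\Gb,\bfGb)$ (since $\Gb\subseteq\ob$), and then invokes Theorem~\ref{Ta1} to get $\bigcap_\infty(\Gb,\bfGb)$, applied to the covers $\vc_n$; the reverse direction (6)$\Rightarrow$(4) is exactly your argument of picking one $V_n\in\vc_n$ and using $\cap\vc_n\subseteq V_n$. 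Your route instead proves (1)$\Rightarrow$(6) directly by the tail selection $\vc_n=\{W_m\in\uc_n: m\geq n\}$ from the $\alpha_2$-witness $\wc$, which is self-contained: it needs neither \cite[Proposition 5.4]{am} nor Theorem~\ref{Ta1}. This buys two things. First, your diagnosis of why the naive intersection $\wc_n=\uc_n\cap\wc$ fails for $\ob$ is correct (a single member of $\wc$ omitting $B$ may recur in every $\uc_n$), and in fact this same subtlety is present in the paper's own proof of Theorem~\ref{Ta1}, where the step ``$B_0\nsubseteq\cap\wc_n$ for infinitely many $n$ implies $B_0\nsubseteq W_n$ for infinitely many $n$'' is not justified when the offending member repeats; your index-threshold device repairs precisely this, so your argument is arguably tighter than the chain the paper relies on. Second, your verification that $\{\cap\vc_n:n\in\omega\}$ is an infinite family (via singletons belonging to $\bk$ and properness of members of $\uc_n$), and your choice of pairwise distinct $V_n$ in (6)$\Rightarrow$(4), supply details the paper passes over with ``it is easy to see.'' The only cost of your route is that it duplicates, rather than reuses, work already done in Theorem~\ref{Ta1}.
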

\begin{proof}
The proofs of the equivalences of $(1)$ to $(4)$ are similar to the proof of \cite[Theorem 3.3]{mk} and $(4)\Leftrightarrow (5)$ follows from  Theorem \ref{thm9}. We now prove $(4)\Leftrightarrow (6)$.

$(4)\Rightarrow (6)$. Let $\{\uc_n:n\in \omega\}$ be a sequence of open $\bk$-covers of $X$. Since $X$ satisfies $\S1(\ob,\bfGb)$, we know that there is a subset $\vc_n$ of $\uc_n$ for each $n$ which is a $\gb$-cover of $X$ (see \cite[Proposition 5.4]{am}). Note that satisfies $\S1(\Gb,\bfGb)$ and so $X$ also satisfies $\bigcap_\infty(\Gb,\bfGb)$.  Choose a $\wc_n\subseteq \vc_n$ for each $n$ for which $\{\cap \wc_n:n\in \omega\}\in \bfGb$ and also $\wc_n\subseteq \uc_n$. Hence $(6)$ holds.

$(6)\Rightarrow (4)$. Let $\{\uc_n:n\in \omega\}$ be a sequence of open $\bk$-covers of $X$. Choose an infinite subset $\vc_n$ of $\uc_n$ for each $n$ for which $\{\cap \vc_n:n\in \omega\}\in \bfGb$. Again choose a $V_n$ from $\vc_n$ for each $n$. It is easy to see that $\{V_n:n\in \omega\}\in \bfGb$. Hence $(4)$ holds.
\end{proof}

\section{Further observations on selection principles}
\subsection{On product bornologies}
We have the following lemma related an open $\bk^n$-cover of $X^n$, $n\in \omega$.
\begin{Lemma}(\cite[Claim]{am})
\label{L}
Let $\bk$ be a bornology on a topological space $X$ with a compact base $\bk_0$ and $n\in \omega$. Let $\uc$ be an open $\bk^n$-cover of $X^n$. There exists an open $\bk$-cover $\vc$ of $X$ such that $\{V^n:V\in \vc\}$ is a $\bk^n$-cover of $X^n$ which refines $\uc$.
\end{Lemma}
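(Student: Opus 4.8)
The plan is to reduce everything to a single topological fact---a generalized tube lemma for finite powers of a compact set---and then to exploit that a compact base is cofinal in $\bk^n$. First I would record that $\{K^n : K\in\bk_0\}$ is cofinal in $\bk^n$: each basic set $B^n$ with $B\in\bk$ satisfies $B\subseteq K$ for some $K\in\bk_0$, hence $B^n\subseteq K^n$, and $K^n$ is compact as a finite power of a compact set. Consequently, since $\uc$ is an open $\bk^n$-cover, for every $K\in\bk_0$ there is some $U_K\in\uc$ with $K^n\subseteq U_K$.

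The heart of the argument is the following claim, which I expect to be the main obstacle: if $K\subseteq X$ is compact and $K^n\subseteq U$ with $U$ open in $X^n$, then there is an open $V\supseteq K$ in $X$ with $V^n\subseteq U$. I would first prove the two-factor product tube lemma: given compact $A,B$ and $A\times B\subseteq W$ open, cover $A\times B$ by basic boxes $P_{a,b}\times Q_{a,b}\subseteq W$; for each fixed $a\in A$ extract a finite subcover of $\{a\}\times B$ and form the open intersection $U_a=\bigcap_j P_{a,b_j}\ni a$ of first coordinates and the open union $V_a=\bigcup_j Q_{a,b_j}\supseteq B$ of second coordinates, so that $U_a\times V_a\subseteq W$; then take a finite subcover of $A$ by the $U_a$'s and, dually, union the first coordinates and intersect the second to obtain open $U\supseteq A$, $V\supseteq B$ with $U\times V\subseteq W$. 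Grouping $X^n$ as $X\times X^{n-1}$ and applying this with $A=K$, $B=K^{n-1}$ gives open $V_1\supseteq K$ and $V'\supseteq K^{n-1}$ with $V_1\times V'\subseteq U$; inducting on $n$ applied to $K^{n-1}\subseteq V'$ yields open $V_2,\dots,V_n\supseteq K$ with $V_2\times\cdots\times V_n\subseteq V'$. Setting $V=\bigcap_{i\le n}V_i$ then gives an open $V\supseteq K$ with $V^n\subseteq V_1\times\cdots\times V_n\subseteq U$. Compactness of $K$ is used precisely in each finite-subcover step, which is why the lemma requires a compact (not merely closed) base.

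Applying the claim to each $U_K$ produces an open $V_K\supseteq K$ with $V_K^n\subseteq U_K$, and I then set $\vc=\{V_K:K\in\bk_0\}$. It remains to verify the three requirements, which is routine bookkeeping. The family $\vc$ is an open $\bk$-cover: for $B\in\bk$ choose $K\in\bk_0$ with $B\subseteq K$, whence $B\subseteq K\subseteq V_K\in\vc$; moreover $X\notin\vc$, since $V_K=X$ would force $V_K^n=X^n\subseteq U_K$, hence $U_K=X^n$, contradicting that $\uc$ is a $\bk^n$-cover (so $X^n\notin\uc$). The family $\{V^n:V\in\vc\}$ is a $\bk^n$-cover of $X^n$: any $E\in\bk^n$ lies in some $B^n\subseteq K^n\subseteq V_K^n$, and no $V_K^n$ equals $X^n$ by the same argument. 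Finally $\{V^n:V\in\vc\}$ refines $\uc$, because each $V_K^n\subseteq U_K\in\uc$. This completes the plan.
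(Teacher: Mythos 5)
Your proposal is correct and follows essentially the same route as the paper: for each $K\in\bk_0$ pick $U_K\in\uc$ with $K^n\subseteq U_K$, use compactness to shrink to an open $V_K$ with $K^n\subseteq V_K^n\subseteq U_K$, and let $\vc=\{V_K:K\in\bk_0\}$. The only difference is that your two-factor tube lemma plus induction is a self-contained proof of the Wallace theorem, which the paper simply cites from Engelking; your verification that $X\notin\vc$ is a detail the paper leaves implicit.
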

\begin{proof}
Let $\uc$ be an open $\bk^n$-cover of $X^n$. Let $B\in \bk_0$.  We have $B^n\subseteq U$ for some $U\in \uc$. As $B$ is compact, there is an open set $V_B\subseteq X$ for which $B^n\subseteq V^n_B\subseteq U$ (Wallace theorem \cite{Engelking}). Take the family $\vc=\{V_B:B\in \bk_0\}$. It is easy to see that $\vc$ is the open $\bk$-cover satisfying the given condition
\end{proof}

\begin{Prop}
\label{PP-1}
Let $\bk$ be a bornology on a topological space $X$ with a compact base $\bk_0$. The following assertions are equivalent.\\
\noindent$(1)$ $X$ is $\bk$-Lindel\"{o}f.\\
\noindent$(2)$ $X^n$ is $\bk^n$-Lindel\"{o}f for any $n\in \omega$.
\end{Prop}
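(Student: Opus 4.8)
We first observe that the implication $(2)\Rightarrow(1)$ is essentially free: instantiating $(2)$ at $n=1$ and noting that $X^1=X$ and that $\bk^1$, the bornology generated by $\{B^1:B\in\bk\}=\bk$, is just $\bk$ itself, we see that $\bk^1$-Lindel\"ofness of $X^1$ is precisely $\bk$-Lindel\"ofness of $X$. Hence the whole substance of the proposition lies in $(1)\Rightarrow(2)$, and the plan concentrates there.

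For $(1)\Rightarrow(2)$, fix $n\in\omega$ and let $\uc$ be an arbitrary open $\bk^n$-cover of $X^n$. The first step is to replace $\uc$ by a cover built from ``diagonal'' sets by invoking Lemma~\ref{L}: it produces an open $\bk$-cover $\vc$ of $X$ such that $\{V^n:V\in\vc\}$ is a $\bk^n$-cover of $X^n$ refining $\uc$. Since $X$ is $\bk$-Lindel\"of by hypothesis, the open $\bk$-cover $\vc$ contains a countable $\bk$-subcover $\{V_k:k\in\omega\}$.

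The key point is then to transfer this selection back to $X^n$. I would first record the elementary fact that $\{B^n:B\in\bk_0\}$ is a base for $\bk^n$: each member of $\bk^n$ is contained in some finite union $B_1^n\cup\cdots\cup B_m^n\subseteq(B_1\cup\cdots\cup B_m)^n$, and $B_1\cup\cdots\cup B_m\in\bk$ is dominated by a member of $\bk_0$. Using this, for any $C\in\bk^n$ choose $B\in\bk_0$ with $C\subseteq B^n$; as $\{V_k:k\in\omega\}$ is a $\bk$-cover there is $k$ with $B\subseteq V_k$, whence $C\subseteq B^n\subseteq V_k^n$. Thus $\{V_k^n:k\in\omega\}$ is already a countable $\bk^n$-cover of $X^n$.

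Finally, I would pull this back into $\uc$ through the refinement: for each $k$ pick $U_k\in\uc$ with $V_k^n\subseteq U_k$. The very same computation shows that every $C\in\bk^n$ satisfies $C\subseteq V_k^n\subseteq U_k$ for a suitable $k$, so $\{U_k:k\in\omega\}$ is a $\bk^n$-cover; it is a countable subfamily of $\uc$, and $X^n\notin\uc$ guarantees it is a genuine $\bk^n$-cover. Hence $\uc$ admits a countable $\bk^n$-subcover, so $X^n$ is $\bk^n$-Lindel\"of. The main obstacle is exactly this last transfer: $\bk$-Lindel\"ofness only delivers a subcover of the \emph{refining} diagonal family, not of $\uc$ itself, so one must check that choosing a single member of $\uc$ above each $V_k^n$ still produces a $\bk^n$-cover, which is precisely what the base description $\{B^n:B\in\bk_0\}$ of $\bk^n$ secures.
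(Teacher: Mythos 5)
Your proof is correct, and it takes what is evidently the intended route: the paper states Proposition~\ref{PP-1} without proof, but places it immediately after Lemma~\ref{L}, which is exactly the tool you invoke. Your argument---pass to the diagonal refinement $\{V^n:V\in\vc\}$ via Lemma~\ref{L}, extract a countable $\bk$-subcover $\{V_k:k\in\omega\}$ using $\bk$-Lindel\"ofness, and transfer back to a countable subfamily of $\uc$ via the observation that $\{B^n:B\in\bk_0\}$ is a base for $\bk^n$---fills in precisely the details the paper leaves to the reader, including the genuinely necessary step of checking that selecting one $U_k\in\uc$ above each $V_k^n$ still yields a $\bk^n$-cover.
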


\begin{Th}
\label{TP-1}
Let $\bk$ be a bornology on a topological space $X$ with a compact base $\bk_0$. Let $\Pi\in \{\S1, \Sf\}$. The following assertions are equivalent.\\
\noindent$(1)$ $X$ satisfies $\Pi(\ob,\ob)$.\\
\noindent$(2)$ $X^n$ satisfies $\Pi(\mathcal{O}_{\bk^n},\mathcal{O}_{\bk^n})$ for any $n\in \omega$.
\end{Th}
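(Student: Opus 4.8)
The plan is to establish the equivalence by reducing selections over the product bornology $\bk^n$ on $X^n$ to selections over $\bk$ on $X$, with Lemma~\ref{L} furnishing the bridge between the two.

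The direction $(2)\Rightarrow(1)$ will be immediate. Taking $n=1$, the bornology $\bk^1$ on $X^1=X$ is generated by $\{B^1:B\in\bk\}=\bk$, so $\bk^1=\bk$ and $\mathcal{O}_{\bk^1}=\ob$. Hence the instance $n=1$ of $(2)$ is literally $\Pi(\ob,\ob)$, which is $(1)$.

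For $(1)\Rightarrow(2)$ I would fix $n\in\omega$ and start from a sequence $\{\uc_m:m\in\omega\}$ of open $\bk^n$-covers of $X^n$. First I would apply Lemma~\ref{L} to each $\uc_m$ to obtain an open $\bk$-cover $\vc_m$ of $X$ such that $\{V^n:V\in\vc_m\}$ is a $\bk^n$-cover of $X^n$ refining $\uc_m$; in particular, for each $V\in\vc_m$ I may fix $U_V\in\uc_m$ with $V^n\subseteq U_V$. Then I would feed the sequence $\{\vc_m:m\in\omega\}$ of open $\bk$-covers of $X$ into the hypothesis $\Pi(\ob,\ob)$ and transport the resulting selection back to $X^n$ through the assignment $V\mapsto U_V$. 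Concretely, for $\Pi=\S1$ the hypothesis yields $V_m\in\vc_m$ with $\{V_m:m\in\omega\}\in\ob$; setting $U_m=U_{V_m}$, I would check that $\{U_m:m\in\omega\}$ is an open $\bk^n$-cover of $X^n$. For $\Pi=\Sf$ the hypothesis yields finite $\mathcal{F}_m\subseteq\vc_m$ with $\bigcup_{m\in\omega}\mathcal{F}_m\in\ob$; setting $\mathcal{G}_m=\{U_V:V\in\mathcal{F}_m\}$, a finite subset of $\uc_m$, I would check that $\bigcup_{m\in\omega}\mathcal{G}_m$ is an open $\bk^n$-cover of $X^n$. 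In both verifications the decisive observation is that, since $\bk$ is closed under finite unions, every member of $\bk^n$ is contained in some $B^n$ with $B\in\bk$; given such a $B$, the selected family being a $\bk$-cover of $X$ produces an index with $B\subseteq V$, whence $B^n\subseteq V^n\subseteq U_V$.

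I do not expect a serious obstacle here: the entire content is the reduction through Lemma~\ref{L}, which replaces an arbitrary open $\bk^n$-cover of $X^n$ by one refined by a family of the special product shape $\{V^n:V\in\vc\}$, and this is precisely what allows the covering condition to pass back from $X$ to $X^n$. The only point demanding mild care is confirming that the product base $\{B^n:B\in\bk\}$ suffices to test the $\bk^n$-cover property, together with the routine checks that the produced families omit $X^n$ (each selected set lies in some $\uc_m$, which excludes $X^n$) and lie in the respective $\uc_m$.
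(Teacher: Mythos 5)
Your proposal is correct and matches the paper's own argument: both use Lemma~\ref{L} to replace each open $\bk^n$-cover of $X^n$ by an open $\bk$-cover of $X$ whose $n$-th powers refine it, apply $\Pi(\ob,\ob)$ on $X$, and transport the selection back via the refinement, checking the cover property against sets of the form $B^n$ with $B\in\bk$. Your treatment is if anything slightly more complete, since you write out the $\Sf$ case and the $(2)\Rightarrow(1)$ direction, which the paper dismisses as similar and easy, respectively.
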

\begin{proof}
$(1)\Rightarrow (2)$. We prove only for $\Pi=\S1$. Let $\{\uc_k:k\in \omega\}$ be a sequence of open $\bk^n$-covers of $X^n$. For each $\uc_k$ choose an open $\bk$-cover $\vc_k$ of $X$ for which $\{V^n:V\in \vc_k\}$ is an open $\bk^n$-cover of $X$ that refines $\uc_k$. Applying $\S1(\ob,\ob)$, choose $V_k\in \vc_k$ for each $k$ for which $\{V_k:k\in \omega\}$ is an open $\bk$-cover of $X$. For each $V_k$ choose $U_k\in \uc_k$ satisfying $V_k^n\subseteq U_k$. We prove that $\{U_k:k\in \omega\}$ is an open $\bk^n$-cover of $X^n$. Let $B^n\in \bk^n$, $B\in \bk$. Now $B\subseteq V_k$ for some $k$. Consequently, $B^n\subseteq V^n_k\subseteq U_k$. Hence $\{U_k:k\in \omega\}$ is an open $\bk^n$-cover of $X^n$ witnessing $\S1(\mathcal{O}_{\bk^n},\mathcal{O}_{\bk^n})$. The proof of $(2)\Rightarrow (1)$ is easy.
\end{proof}
Similarly, we obatin the following.
\begin{Th}
\label{TP-2}
Let $\bk$ be a bornology on a topological space $X$ with a compact base $\bk_0$. Let $\Pi\in \{\S1, \Sf\}$. The following statements are equivalent.\\
\noindent$(1)$ $X$ satisfies $\Pi(\ob,\Gb)$.\\
\noindent$(2)$ $X^n$ satisfies $\Pi(\mathcal{O}_{\bk^n},\Gamma_{\bk^n})$ for any $n\in \omega$.
\end{Th}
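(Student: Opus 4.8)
The plan is to mirror the proof of Theorem~\ref{TP-1}, keeping the product‑refinement Lemma~\ref{L} as the main engine but replacing the target class of $\bk$-covers by that of $\gb$-covers; I would treat the two selectors $\Pi=\S1$ and $\Pi=\Sf$ in parallel, spelling out $\S1$ in detail and indicating the finite‑selection variant. The implication $(2)\Rightarrow(1)$ is immediate: specialising to $n=1$ we have $X^1=X$, $\bk^1=\bk$, $\mathcal{O}_{\bk^1}=\ob$ and $\Gamma_{\bk^1}=\Gb$, so the instance of $(2)$ for $n=1$ is literally $(1)$.

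For $(1)\Rightarrow(2)$, fix $n$ and a sequence $\{\uc_k:k\in\omega\}$ of open $\bk^n$-covers of $X^n$. By Lemma~\ref{L}, for each $k$ choose an open $\bk$-cover $\vc_k$ of $X$ such that $\{V^n:V\in\vc_k\}$ is a $\bk^n$-cover of $X^n$ refining $\uc_k$, and fix for each $V\in\vc_k$ a set $U_V\in\uc_k$ with $V^n\subseteq U_V$. In the $\S1$ case I would apply $\S1(\ob,\Gb)$ to $\{\vc_k\}$ to obtain $V_k\in\vc_k$ with $\{V_k:k\in\omega\}\in\Gb$, and set $U_k=U_{V_k}$. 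In the $\Sf$ case I would apply $\Sf(\ob,\Gb)$ to obtain finite $\wc_k\subseteq\vc_k$ with $\bigcup_{k}\wc_k\in\Gb$, and take the family $\bigcup_{k}\{U_V:V\in\wc_k\}$. The claim is that the resulting family is a $\gamma_{\bk^n}$-cover of $X^n$, which witnesses $\Pi(\mathcal{O}_{\bk^n},\Gamma_{\bk^n})$.

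To verify the $\gamma_{\bk^n}$ condition I would use that $\{B^n:B\in\bk\}$ is a base for $\bk^n$ (as $\bk$ is closed under finite unions), so it suffices to check the defining condition on these basic sets. The point is that monotonicity $V^n\subseteq U_V$ transports the selection on $X$ to one on $X^n$: whenever a selected $V$ satisfies $B\subseteq V$ we get $B^n\subseteq V^n\subseteq U_V$, so the set $\{U:B^n\nsubseteq U\}$ is contained in the image of $\{V:B\nsubseteq V\}$ under $V\mapsto U_V$. Since $\{V_k:k\in\omega\}$ (resp.\ $\bigcup_k\wc_k$) is a $\gb$-cover, the latter set is finite, hence so is the former.

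The one genuinely nonautomatic point — and the step I expect to be the main obstacle — is showing that the selected family is \emph{infinite}, as demanded of a $\gamma_{\bk^n}$-cover; the selected family of $V$'s is infinite, but the assignment $V\mapsto U_V$ could a priori collapse it to a finite image. I would rule this out by a pigeonhole argument: if the image were finite, some $U_\ast$ would receive infinitely many preimages, and since for each $B\in\bk$ cofinitely many selected $V$ contain $B$, some preimage $V$ of $U_\ast$ would satisfy $B\subseteq V$, forcing $B^n\subseteq U_\ast$ for every $B\in\bk$. As every finite subset of $X$ lies in $\bk$ (the bornology is a cover closed under finite unions), we get $\bigcup_{B\in\bk}B^n=X^n$, whence $U_\ast=X^n$, contradicting that members of a $\bk^n$-cover are proper. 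Thus the image is infinite, completing the verification.
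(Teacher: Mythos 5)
Your proof is correct and takes essentially the same route as the paper, which obtains this theorem by repeating the argument of Theorem~\ref{TP-1}: refine each $\bk^n$-cover via Lemma~\ref{L}, select on $X$ using $\Pi(\ob,\Gb)$, and transport the selection through the assignment $V\mapsto U_V$, checking the $\gamma_{\bk^n}$ condition on the base $\{B^n:B\in\bk\}$. Your pigeonhole argument showing the selected family cannot collapse to a finite image is a detail the paper's ``similarly, we obtain'' leaves implicit, and it is a genuine (and correctly handled) point since $\gamma_{\bk^n}$-covers are required to be infinite.
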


We state the following lemma which we will use in the next result. The proof is similar to that of \cite[Lemma 11]{cocVIII}.
\begin{Lemma}
\label{Lg}
Let $\bk$ be a bornology on topological space $X$ with a closed base. Let $X$ be $\bk$-Lindel\"{o}f and satisfy $\CDR(\ob,\ob)$. If $X$ satisfies $\Sf(\ob,\mathcal{O}^{gp}_{\bk})$, then for any sequence $\{\uc_n:n\in \omega\}$ of open $\bk$-covers of $X$ there exists a sequence $\{\vc_n:n\in \omega\}$ of pairwise disjoint finite sets with $\vc_n\subseteq \uc_n$ for each $n$ such that for $B\in \bk$ there exists $n_0$ satisfying $B\subseteq V$ for some $V\in \vc_n$ for all $n\geq n_0$.
\end{Lemma}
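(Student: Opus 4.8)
The plan is to follow the template of \cite[Lemma 11]{cocVIII}, arranging the three hypotheses so that each performs exactly one task: $\bk$-Lindel\"{o}fness reduces everything to countable covers, $\CDR(\ob,\ob)$ supplies disjointness, and $\Sf(\ob,\mathcal{O}^{gp}_{\bk})$ supplies the grouping. First I would fix a sequence $\{\uc_n:n\in\omega\}$ of open $\bk$-covers and, invoking $\bk$-Lindel\"{o}fness, replace each $\uc_n$ by a countable open $\bk$-subcover, so that $\uc_n=\{U^n_m:m\in\omega\}$. The target to keep in mind is that the eventual sets $\vc_n$ must be pairwise disjoint, must satisfy $\vc_n\subseteq\uc_n$, and must catch every $B\in\bk$ by a single member for \emph{all} large $n$; that is, $\{\vc_n\}$ must itself be a witnessing grouping of a $\bk$-groupable cover.

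Next I would apply $\CDR(\ob,\ob)$ to $\{\uc_n:n\in\omega\}$ to extract pairwise disjoint open $\bk$-covers $\uc'_n\subseteq\uc_n$. The purpose of this step is that any finite choices $\vc_n\subseteq\uc'_n$ made later are then automatically pairwise disjoint, securing the disjointness in the conclusion. Applying $\Sf(\ob,\mathcal{O}^{gp}_{\bk})$ to $\{\uc'_n:n\in\omega\}$ yields finite sets $\dc_n\subseteq\uc'_n$ whose union $\wc=\bigcup_{n\in\omega}\dc_n$ is an open $\bk$-groupable cover; let $\{\gc_k:k\in\omega\}$ be a partition of $\wc$ into pairwise disjoint finite pieces witnessing $\bk$-groupability, so that for every $B\in\bk$ there is $k_0$ with the property that for all $k\ge k_0$ some $U\in\gc_k$ satisfies $B\subseteq U$. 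Since each $\dc_n$ is finite, each $\gc_k$ meets only finitely many of the $\dc_n$, and conversely each $\dc_n$ is split among finitely many groups; recording this finite interaction between the two partitions of $\wc$ (the level partition $\{\dc_n\}$ and the groupability partition $\{\gc_k\}$) is the bookkeeping that the final step will rely on.

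The remaining and decisive step is the \emph{realignment}: passing from the ``single member catches $B$ in each late group $\gc_k$'' statement to the required ``single member of $\vc_n$ catches $B$ for every sufficiently large level $n$.'' Here I would redistribute the grouped selection across the individual covers $\uc'_n$, exploiting the disjoint splittings that $\CDR(\ob,\ob)$ makes available together with the fact that each $\uc_n$ is itself a $\bk$-cover (so every $B\in\bk$ is caught by a \emph{single} member of $\uc_n$), to define finite sets $\vc_n\subseteq\uc'_n$ into which the block-indexed catching is transferred level-by-level; the disjointness of the $\uc'_n$ keeps the $\vc_n$ pairwise disjoint throughout. I expect this realignment to be the main obstacle, precisely because a groupable cover only guarantees a catcher \emph{somewhere inside} each group, whereas the conclusion demands a catcher in $\vc_n$ for \emph{all} large $n$ while respecting $\vc_n\subseteq\uc_n$. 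Reconciling these two requirements is exactly the combinatorial core carried out in \cite[Lemma 11]{cocVIII}, and my intention is to adapt that argument essentially verbatim, with the classical open covers replaced by the bornological covers in $\ob$ and the role of points replaced by members $B$ of $\bk$.
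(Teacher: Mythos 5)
Your division of labour among the three hypotheses is the right one, and you correctly isolate the realignment step as the crux; but your setup makes that step impossible, so there is a genuine gap. The problem is that you apply $\Sf(\ob,\mathcal{O}^{gp}_{\bk})$ to the disjointified covers $\uc'_n$ themselves. The data this produces --- finite $\dc_n\subseteq\uc'_n$ whose union is groupable via groups $\gc_k$ --- carries no information about \emph{which levels} contain catchers: the selection is allowed to be empty for infinitely many $n$ (say all odd $n$), and a group $\gc_k$ that eventually catches $B$ may then consist entirely of sets from even levels. The conclusion demands a catcher inside $\vc_n\subseteq\uc_n$ for \emph{every} large $n$, and a set from $\uc'_m$ does not lie in $\uc'_n$ (by the disjointness you arranged) and need not lie in $\uc_n$, so no bookkeeping between the two partitions of $\bigcup_n\dc_n$ can move catchers to the missing levels. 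Your fallback --- supplementing $\vc_n$ with members of $\uc_n$ using only the fact that $\uc_n$ is a $\bk$-cover --- also fails: producing a \emph{finite} $\vc_n\subseteq\uc_n$ that eventually catches every $B\in\bk$ is a statement of selection-principle strength (essentially the lemma itself), not something a single cover provides, and reapplying $\Sf$ to the bad levels just reproduces the same misalignment.

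The missing idea (and the actual mechanism behind \cite[Lemma 11]{cocVIII}, which the paper cites in lieu of a proof) is to feed the selection principle the \emph{intersection covers} rather than the covers themselves: after the Lindel\"{o}f and $\CDR(\ob,\ob)$ reductions, form $\wc_n=\{U_1\cap\dots\cap U_n: U_i\in\uc'_i\}$, which are open $\bk$-covers by Lemma~\ref{lemma3}, and apply $\Sf(\ob,\mathcal{O}^{gp}_{\bk})$ to the sequence $(\wc_n)$. Each selected set $W$ then remembers a component $U^W_i\in\uc'_i$ for every $i$ up to its level, and $B\subseteq W$ forces $B\subseteq U^W_i$ for all such $i$: a catcher in a late group automatically yields catchers at \emph{all} earlier levels, which is exactly the downward transfer your version lacks. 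Finally, since the groups are pairwise disjoint, nonempty and finite while each selection is finite, only finitely many groups can meet the first $N$ selections, so the minimal level occurring in the $k$-th group tends to infinity; passing to a subsequence of groups whose level-blocks increase and letting $\vc_n$ be the set of $n$-th components of the sets in the appropriate group gives finite $\vc_n\subseteq\uc'_n$ --- hence pairwise disjoint, which is where $\CDR(\ob,\ob)$ is genuinely used --- containing a catcher of $B$ for all sufficiently large $n$.
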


\begin{Th}
Let $\bk$ be a bornology on a topological space $X$ with a compact base $\bk_0$. Let $X$ be $\bk$-Lindel\"{o}f and satisfy $\CDR(\ob,\ob)$. If $X$ satisfies $\Sf(\ob,\mathcal{O}^{gp}_{\bk})$, then $X^n$ satisfies $\Uf(\mathcal{O}_{\bk^n},\Gamma_{\bk^n})$ for $n\in \omega$.
\end{Th}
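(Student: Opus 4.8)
The plan is to transport the problem on $X^n$ back to $X$ through the diagonal covers $\{V^n:V\in\vc\}$ furnished by Lemma~\ref{L}, to solve the resulting selection problem on $X$ by invoking Lemma~\ref{Lg} (which already bundles the three standing hypotheses, namely $\bk$-Lindel\"{o}fness, $\CDR(\ob,\ob)$ and $\Sf(\ob,\mathcal{O}^{gp}_{\bk})$), and then to lift the finite selections back up to $X^n$.

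First I would fix $n\in\omega$ and let $\{\uc_k:k\in\omega\}$ be an arbitrary sequence of open $\bk^n$-covers of $X^n$. Since $\bk$ has a compact base, Lemma~\ref{L} yields, for each $k$, an open $\bk$-cover $\vc_k$ of $X$ such that $\{V^n:V\in\vc_k\}$ is a $\bk^n$-cover of $X^n$ refining $\uc_k$. Applying Lemma~\ref{Lg} to the sequence $\{\vc_k:k\in\omega\}$ of open $\bk$-covers, I obtain pairwise disjoint finite sets $\wc_k\subseteq\vc_k$ so that for every $B\in\bk$ there is a $k_0$ with the property that $B\subseteq W$ for some $W\in\wc_k$ whenever $k\geq k_0$. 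Now I would lift back: using the refinement, for each $W\in\wc_k$ choose $U_W\in\uc_k$ with $W^n\subseteq U_W$, and put $\dc_k=\{U_W:W\in\wc_k\}$, a finite subset of $\uc_k$.

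It then remains to check that $\{\bigcup\dc_k:k\in\omega\}$ witnesses $\Uf(\mathcal{O}_{\bk^n},\Gamma_{\bk^n})$. The key reduction is that, as $\bk$ is closed under finite unions, every member of $\bk^n$ is contained in some $B^n$ with $B\in\bk$, so it is enough to treat the generators $B^n$. Fixing $B\in\bk$ and the associated $k_0$, for each $k\geq k_0$ there is $W\in\wc_k$ with $B\subseteq W$, whence $B^n\subseteq W^n\subseteq U_W\subseteq\bigcup\dc_k$; thus every generator is eventually covered. If the family $\{\bigcup\dc_k:k\in\omega\}$ is infinite with proper members it is a $\gamma_{\bk^n}$-cover; otherwise a single set $\bigcup\dc_k$ contains $B^n$ for all $B\in\bk$ and hence equals $X^n$ (since $\bigcup_{B\in\bk}B^n=X^n$), which is precisely the alternative permitted by the definition of $\Uf$. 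In either case $X^n$ satisfies $\Uf(\mathcal{O}_{\bk^n},\Gamma_{\bk^n})$.

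The hard part will be organizing the passage between the bornologies $\bk$ and $\bk^n$ cleanly: one must verify that the finite, pairwise disjoint selections produced on $X$ by Lemma~\ref{Lg} really do lift — via the map $V\mapsto V^n$ together with the refinement — to finite selections from the original covers $\uc_k$ whose unions form a $\bk^n$-sequence, and that the $\gamma$-type eventual-containment condition need only be verified on the generating sets $B^n$. The infinitude and properness bookkeeping of the resulting family is then absorbed by the disjunction built into $\Uf$, so no separate argument for it is required.
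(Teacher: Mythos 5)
Your proposal is correct and follows essentially the same route as the paper's proof: transport the covers of $X^n$ down to $X$ via Lemma~\ref{L}, apply Lemma~\ref{Lg} to obtain the pairwise disjoint finite selections with the eventual-containment property, lift them back through the refinement, and verify the $\gamma_{\bk^n}$-condition on the generators $B^n$. The only (harmless) additions are your explicit remarks that members of $\bk^n$ sit inside generators $B^n$ and that the degenerate case $\bigcup\dc_k=X^n$ is absorbed by the disjunction in $\Uf$ — points the paper passes over silently.
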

\begin{proof}
Fix a $n\in \omega$. Let $\{\uc_k:k\in \omega\}$ be a sequence of open $\bk^n$-covers of $X^n$. By Lemma \ref{L}, choose an open $\bk$-cover $\wc_k$ of $X$ such that $\{W^n:W\in \wc_k\}$ is an open $\bk^n$-cover of $X^n$ which refines $\uc_k$. Applying $\Sf(\ob,\mathcal{O}^{gp}_{\bk})$ and Lemma \ref{Lg}, choose a sequence $\{\vc_k:k\in \omega\}$ of pairwise disjoint finite sets with $\vc_k\subseteq \wc_k$ for each $k$ such that for $B\in \bk$ there is $k_0$ with $B\subseteq V$ for some $V\in \vc_k$ for all $k\geq k_0$. For each $V\in \vc_k$ choose $U_V\in \uc_k$ with $V^n\subseteq U_V$. Set $\zc_k=\{U_V:V^n\subseteq U_V \text{ for } V\in \vc_k\}$. $\zc_k$ is a finite subset of $\uc_k$. Let $B^n\in \bk^n$, $B\in \bk$. Choose $k_0$ with $B\subseteq V$ for some $V\in \vc_k$ for all $k\geq k_0$. Consequently, $B^n\subseteq V^n\subseteq U_V$ for some $U_V\in \zc_k$ for all $k\geq k_0$ and so $B^n\subseteq \cup\zc_k$ for all $k\geq k_0$. Hence $\{\cup\zc_k:k\in \omega\}$ is a $\gamma_{\bk^n}$-cover of $X^n$ witnessing $\Uf(\mathcal{O}_{\bk^n},\Gamma_{\bk^n})$.
\end{proof}

\subsection{Behaviour of selection principles under a continuous function}
The following lemma is easily obtained.
\begin{Lemma}
\label{Lgc1}
Let $\bk$ be a bornology on a topological space $X$ with a closed base and $Y$ be another topological space. Let $g:X\rightarrow Y$ be a continuous function. If $\uc$ is an open $g(\bk)$-cover ($\gamma_{g(\bk)}$-cover) of $g(X)$, then $\{g^{-1}(U):U\in \uc\}$ is an open $\bk$-cover ($\gamma_{\bk}$-cover) of $X$.
\end{Lemma}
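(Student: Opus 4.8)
The plan is to verify the defining conditions of a $\bk$-cover (respectively a $\gamma_{\bk}$-cover) of $X$ for the family $\{g^{-1}(U):U\in \uc\}$ directly, exploiting that $g$ is continuous and that $g\colon X\to g(X)$ is surjective. Two elementary facts will carry the bookkeeping: for every $U\in \uc$ the set $g^{-1}(U)$ is open, since $g$ is continuous and $U$ is open in $g(X)$; and for every $B\in \bk$ one has $B\subseteq g^{-1}(g(B))$, while $g(B)\in g(\bk)$ by the very definition of the pushed-forward bornology $g(\bk)$.

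For the open $g(\bk)$-cover case, first I would fix $B\in \bk$ and apply the covering property of $\uc$ to the member $g(B)\in g(\bk)$, obtaining $U\in \uc$ with $g(B)\subseteq U$; then $B\subseteq g^{-1}(g(B))\subseteq g^{-1}(U)$, which gives the required containment. To see that $X$ does not belong to the family, I would use that $g(X)\notin \uc$, so each $U\in \uc$ is a proper subset of $g(X)$; choosing $y\in g(X)\setminus U$ together with a preimage $x$ of $y$ shows $x\notin g^{-1}(U)$, whence $g^{-1}(U)\neq X$. Together these establish that $\{g^{-1}(U):U\in \uc\}$ is an open $\bk$-cover of $X$.

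For the $\gamma_{g(\bk)}$-cover case, write $\uc=\{U_n:n\in \omega\}$; as its members are proper open subsets of $g(X)$, the preceding paragraph already yields that each $g^{-1}(U_n)$ is a proper open subset of $X$. I would then check the eventual containment: given $B\in \bk$, applying the $\gamma_{g(\bk)}$ property to $g(B)$ produces $n_0$ with $g(B)\subseteq U_n$ for all $n\geq n_0$, whence $B\subseteq g^{-1}(U_n)$ for all $n\geq n_0$. The only point demanding a little care is that the image family must remain infinite: since $g$ is surjective onto $g(X)$, one has $g(g^{-1}(A))=A$ for every $A\subseteq g(X)$, so the assignment $U\mapsto g^{-1}(U)$ is injective on subsets of $g(X)$ and distinctness of the $U_n$ is preserved. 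This is the only mild obstacle, and it is settled purely by surjectivity of $g$ onto its image; the remainder of the argument is a direct unwinding of the definitions, which matches the remark in the text that the lemma is easily obtained.
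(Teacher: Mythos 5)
Your proof is correct; the paper gives no argument at all for this lemma (it is stated as "easily obtained"), and your direct unwinding of the definitions — using $B\subseteq g^{-1}(g(B))$ for the covering condition, continuity for openness, properness of each $U$ in $g(X)$ for $X\notin\{g^{-1}(U):U\in\uc\}$, and injectivity of $U\mapsto g^{-1}(U)$ on subsets of $g(X)$ to preserve infiniteness — is exactly the intended routine verification. If anything, you are more careful than the paper, since the properness and infiniteness checks are the only points where anything could conceivably go wrong.
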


\begin{Prop}
\label{Pf1}
Let $\bk$ be a bornology on a topological space $X$ with a closed base $\bk_0$ and $(Y,\rho)$ be a topological space. Let $g:X\rightarrow Y$ be a continuous function and $\Pi\in\{\S1,\Sf,\Uf\}$. The following assertions hold.\\
\noindent$(1)$ If $X$ satisfies $\Pi(\ob, \ob) $, then $f(X)$ satisfies  $\Pi(\ogb,\ogb)$.\\
\noindent$(2)$ If $X$ satisfies $\Pi(\ob, \Gb) $, then $f(X)$ satisfies  $\Pi(\ogb,\Ggb)$.\\
\noindent$(3)$ If $X$ satisfies $\Pi(\Gb, \Gb) $, then $f(X)$ satisfies  $\Pi(\Ggb,\Ggb)$.\\
\noindent$(4)$ If $X$ satisfies $\Pi(\Gb, \ob) $, then $f(X)$ satisfies  $\Pi(\Ggb,\ogb)$.
\end{Prop}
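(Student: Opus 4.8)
The plan is to transfer each selection principle from $X$ to $g(X)$ by a pull-back / push-forward argument, with Lemma~\ref{Lgc1} serving as the bridge. Since the four items and the three choices of $\Pi$ are all handled by the same mechanism, I would prove one representative case in full detail---say $\Pi=\S1$ in item $(1)$---and then indicate the routine modifications needed for the others. The two elementary facts that do all the real work are the set-theoretic identity $g(g^{-1}(U))=U$ valid for $U\subseteq g(X)$, and the definitional fact that every member of $g(\bk)$ has the form $g(B)$ with $B\in \bk$.

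For the representative case, let $\{\uc_n:n\in \omega\}$ be a sequence of open $g(\bk)$-covers of $g(X)$. By Lemma~\ref{Lgc1}, each $\vc_n=\{g^{-1}(U):U\in \uc_n\}$ is an open $\bk$-cover of $X$. Applying $\S1(\ob,\ob)$ to $\{\vc_n:n\in \omega\}$ produces, for each $n$, an element $g^{-1}(U_n)\in \vc_n$ (with $U_n\in \uc_n$) such that $\{g^{-1}(U_n):n\in \omega\}\in \ob$. I would then verify that $\{U_n:n\in \omega\}$ is an open $g(\bk)$-cover of $g(X)$: each $U_n$ is open in $g(X)$ and proper, since $g(X)\notin \uc_n$; and for $B'\in g(\bk)$, writing $B'=g(B)$ with $B\in \bk$, the covering property of $\{g^{-1}(U_n):n\in\omega\}$ gives some $n$ with $B\subseteq g^{-1}(U_n)$, whence $B'=g(B)\subseteq g(g^{-1}(U_n))=U_n$, where the last equality uses $U_n\subseteq g(X)$.

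The remaining cases run on the same template. When the source class is $\Ggb$ (items $(3)$ and $(4)$) one pulls back using the $\gamma_{g(\bk)}$-cover clause of Lemma~\ref{Lgc1}, obtaining $\gamma_\bk$-covers of $X$. When the target class is $\Ggb$ (items $(2)$ and $(3)$) one must, after the push-forward, additionally check that $\{U_n:n\in \omega\}$ is a genuine $\gamma_{g(\bk)}$-cover: its members are proper open subsets of $g(X)$ as above, it is infinite since $\{g^{-1}(U_n):n\in \omega\}$ is infinite (were $\{U_n:n\in\omega\}$ finite, so would be its image under $U\mapsto g^{-1}(U)$), and the ``eventually contained in $U_n$'' condition transfers verbatim. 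For $\Pi\in\{\Sf,\Uf\}$ one selects finite subfamilies $\fc_n\subseteq \uc_n$, sets $\wc_n=\{g^{-1}(U):U\in \fc_n\}$, and exploits $\cup \wc_n=g^{-1}(\cup \fc_n)$; in the $\Uf$ case the degenerate alternative $g^{-1}(\cup \fc_n)=X$ forces $\cup \fc_n=g(X)$ (since $\cup\fc_n\subseteq g(X)$), so it matches exactly the clause ``$\cup\wc_n=X$ for some $n$'' in the definition of $\Uf$.

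I expect the only real care to be bookkeeping rather than a genuine obstacle. The points to handle cleanly are: consistently invoking $g(g^{-1}(U))=U$ (legitimate because members of a cover of $g(X)$ are subsets of $g(X)$) when pushing selections forward; preserving infiniteness of $\gamma$-type selections across the possibly non-injective correspondence $U\mapsto g^{-1}(U)$; and correctly aligning the degenerate ``$=X$'' alternative in the $\Uf$ cases. Note that the closed-base hypothesis on $\bk$ is used only indirectly, through Lemma~\ref{Lgc1}; beyond that the argument is purely formal.
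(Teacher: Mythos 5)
Your proposal is correct and follows essentially the same route as the paper: pull the covers back along $g$ via Lemma~\ref{Lgc1}, apply the selection principle on $X$, and push the selections forward using $g(B)\subseteq g(g^{-1}(U))\subseteq U$. The only differences are cosmetic---the paper's representative case is item $(2)$ with $\Pi=\S1$ rather than item $(1)$, and your write-up spells out the infiniteness of the pushed-forward $\gamma$-type covers and the degenerate ``$=X$'' alternative for $\Uf$, details the paper leaves implicit under ``the remaining proofs are similar.''
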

\begin{proof}
We prove only $(2)$ for $\Pi=\S1$ because the remaining proofs are similar. Let  $X$ satisfiy $\S1(\ob,\Gb)$ and let $\{\uc_n:n\in \omega\}$ be a sequence of open $g(\bk)$-covers of $g(X)$. Using Lemma~\ref{Lgc1}, we obtain that for each $n$ $\vc_n=\{g^{-1}(U):U\in \uc_n\}$ is an open $\bk$-cover of $X$. Applying $\S1(\ob,\Gb)$, choose  $g^{-1}(U_n)\in \vc_n$ for each $n$ for which $\{g^{-1}(U_n):n\in \omega\}$ is a $\gb$-cover of $X$. We prove that $\{U_n:n\in \omega\}$ is a $\gamma_{g(\bk)}$-cover of $g(X)$. Take a $B\in g(\bk_0)$ and let $B= g(B^\prime)$ for some $B^\prime\in \bk_0$. Choose $k_0$ for which $B^\prime\subseteq g^{-1}(U_n)$ for all $n\geq  k_0$. Clearly, $B\subseteq U_n$ for all $n\geq  k_0$. Hence $\{U_n:n\in \omega\}$ is a $\gamma_{g(\bk)}$-cover of $g(X)$ witnessing $\S1(\ogb,\Ggb)$.
\end{proof}

\begin{Prop}
\label{Pf3}
Let $\bk$ be a bornology on a topological space $X$ with a compact base $\bk_0$. The following assertions are equivalent.\\
\noindent$(1)$ If $X$ satisfies $\S1(\Gb,\Gb)$, then every continuous image of $X$ into $\omega^\omega$ is bounded.\\
\noindent$(2)$ If $X$ satisfies $\Uf(\Gb,\Gb)$, then every continuous image of $X$ into $\omega^\omega$ is bounded.
\end{Prop}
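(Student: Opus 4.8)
The plan is to treat the two implications separately, leaning on the elementary fact that $\S1(\Gb,\Gb)$ always implies $\Uf(\Gb,\Gb)$ together with a direct argument showing that $\Uf(\Gb,\Gb)$ by itself already forces every continuous image of $X$ in $\omega^\omega$ to be bounded. For $(2)\Rightarrow(1)$, I would assume $(2)$ and suppose $X$ satisfies $\S1(\Gb,\Gb)$. Given a sequence $\{\uc_n:n\in\omega\}$ of $\gamma_\mathfrak{B}$-covers, an $\S1$-selection yields $V_n\in\uc_n$ with $\{V_n:n\in\omega\}\in\Gb$; putting $\vc_n=\{V_n\}$ exhibits the very same choice as a witness for $\Uf(\Gb,\Gb)$. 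Thus $X$ satisfies $\Uf(\Gb,\Gb)$, and $(2)$ delivers the boundedness of continuous images, which is exactly $(1)$.

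The substance of the proposition is the direction $(1)\Rightarrow(2)$, which I would establish by proving $(2)$ outright, so that the implication holds because its conclusion is true. Let $g:X\to\omega^\omega$ be continuous and write $g_n=\pi_n\circ g$ for its $n$-th coordinate. For $n,k\in\omega$ set $V^n_k=g_n^{-1}(\{0,1,\dots,k\})=\{x\in X:g_n(x)\leq k\}$, an open subset of $X$, and let $S=\{n\in\omega:g_n\text{ is unbounded on }X\}$. The crucial step is that for each $n\in S$ the increasing family $\uc_n=\{V^n_k:k\in\omega\}$ is a $\gamma_\mathfrak{B}$-cover: each $V^n_k$ is a proper open set (as $g_n$ is unbounded), and for any $B\in\bk$ one picks $B_0\in\bk_0$ with $B\subseteq B_0$; since $\bk_0$ is a \emph{compact} base, $g(B_0)$ is compact in $\omega^\omega$, hence $g_n(B_0)$ is bounded in $\omega$ and $B\subseteq V^n_k$ for all sufficiently large $k$.

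Assuming $S$ infinite, I would re-enumerate it and apply $\Uf(\Gb,\Gb)$ to $\{\uc_n:n\in S\}$, obtaining finite $\vc_n\subseteq\uc_n$; since each $\uc_n$ is increasing, $\cup\vc_n=V^n_{k_n}$ for some $k_n$, and as no $V^n_{k_n}$ equals $X$ the alternative clause of $\Uf$ cannot occur, so $\{V^n_{k_n}:n\in S\}$ is a $\gamma_\mathfrak{B}$-cover. Now define $h\in\omega^\omega$ by $h(n)=k_n$ for $n\in S$ and $h(n)=\sup_{x\in X}g_n(x)$ (finite, since $n\notin S$) otherwise. Applying the $\gamma_\mathfrak{B}$-property to the singleton $\{x\}\in\bk$ shows that $g_n(x)\leq h(n)$ for all but finitely many $n\in S$, while $g_n(x)\leq h(n)$ holds for every $n\notin S$; hence $g(x)\leq^* h$ for every $x\in X$, so $g(X)$ is bounded. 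The remaining cases ($S$ finite or empty) are immediate, as then cofinitely many coordinates of $g$ are bounded on $X$ and one reads off $h$ directly.

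The hard part is exactly the middle step: verifying that the preimage families $\uc_n$ really land in $\Gb$. This is where the hypothesis of a compact base is indispensable, since it is compactness of $B_0$ that turns membership $B\in\bk$ into boundedness of the projection $g_n$ on $B$, and thereby into the $\bk$-sequence condition defining a $\gamma_\mathfrak{B}$-cover. Once this is secured, the diagonalization producing the single dominating function $h$ and the bookkeeping for the finitely many exceptional coordinates are routine.
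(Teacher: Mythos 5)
Your proof is correct, and it takes a recognizably different route from the paper's. The paper proves assertion (1) directly and declares (2) ``similar'': given a continuous $\psi:X\to\omega^\omega$, it invokes Proposition~\ref{Pf1} to transfer $\S1(\Gb,\Gb)$ to the image $\psi(X)$ equipped with the image bornology $\psi(\bk)$ (with compact base $\psi(\bk_0)$), then applies the selection principle inside $\psi(X)$ to the canonical covers $U^n_k=\{f\in\omega^\omega:f(n)\leq k\}$ to manufacture the dominating function. You instead prove the stronger assertion (2) outright by pulling those same covers back to $X$ (your $V^n_k=g_n^{-1}(\{0,\dots,k\})$), verifying the $\gamma_{\bk}$-property via compactness of the base exactly where the paper uses compactness of $\psi(\bk_0)$, and then you get (1) essentially for free from the trivial implication $\S1(\Gb,\Gb)\Rightarrow\Uf(\Gb,\Gb)$; so you need only one substantive argument where the paper runs two ``similar'' ones, and you never need the image-bornology transfer of Proposition~\ref{Pf1}. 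Your route also buys a point of rigor the paper glosses over: by isolating the set $S$ of coordinates on which $g$ is unbounded, you guarantee that each family $\{V^n_k:k\in\omega\}$ genuinely is a $\gb$-cover in the paper's sense (infinite, with proper open members); in the paper's argument, if some coordinate of $\psi$ is bounded on $\psi(X)$, the corresponding family fails these requirements and needs the same patch you built in. The price is the bookkeeping for the finitely many $n$ with $\vc_n=\emptyset$ and for the coordinates outside $S$, which you correctly identify as routine.
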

\begin{proof}
We prove only $(1)$ as the proof of $(2)$ is similar. Let $X$ satisfy $\S1(\Gb,\Gb)$ and $\psi:X\to\omega^\omega$ be a continuous function. By Proposition~\ref{Pf1}, $\psi(X)$ satisfies $\S1(\Gamma_{\psi(\bk)},\Gamma_{\psi(\bk)})$. For $n,k\in \omega$, let $U^n_k=\{f\in \omega^\omega:f(n)\leq k\}$ and $\uc_n=\{U^n_k:k\in \omega\}$ for each $n$. Take a $B\in \psi(\bk_0)$. Since $B$ is compact, there is $k_0\in \omega$ for which $B\subseteq U^n_k$ for all $k\geq k_0$. Therefore $\uc_n$ is a $\gamma_{\psi(\bk)}$-cover of $\psi(X)$. Applying $\S1(\Gamma_{\psi(\bk)},\Gamma_{\psi(\bk)})$, choose  $U^n_{k_n}\in \uc_n$ for each $n$ for which $\{U^n_{k_n}:n\in \omega\}$ is a $\gamma_{\psi(\bk)}$-cover of $\psi(X)$. Define a function $\varphi:\omega\rightarrow \omega$ by $\varphi(n)=k_n$ for each $n\in \omega$. Take a $g\in \varphi(X)$. There  exists $k_0\in \omega$ for which $g\in U^n_{k_n}$ for all $n\geq k_0$. Consequently, $g(n)\leq \varphi(n)$ for all $n\geq k_0$ and so $f\leq^* \varphi$. Hence $\psi(X)$ is bounded in $\omega^\omega$.
\end{proof}

\begin{Prop}
\label{Pf2}
Let $\bk$ be a bornology on a topological space $X$ with a compact base $\bk_0$. The following assertions hold.\\
\noindent$(1)$ If $X$ satisfies $\Sf(\ob,\ob)$, then every continuous image of $X$ into $\omega^\omega$ is not dominating.\\
\noindent$(2)$ If $X$ satisfies $\Uf(\Gb,\ob)$, then every continuous image of $X$ into $\omega^\omega$ is not dominating.
\end{Prop}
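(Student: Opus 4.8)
The plan is to adapt the classical Hurewicz--Menger characterization (the Menger property holds iff every continuous image in $\omega^\omega$ is non-dominating) to the bornological setting, handling (1) and (2) by a single mechanism since $\Uf(\Gb,\ob)$ feeds into exactly the same construction. Let $\psi:X\to\omega^\omega$ be continuous; the goal is to produce one $g\in\omega^\omega$ with $g\not\leq^*\psi(x)$ for every $x\in X$, i.e.\ the set $\{n:\psi(x)(n)<g(n)\}$ is infinite for each $x$.

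The covers I would use are, for $n,m\in\omega$, the sets $U^n_m=\psi^{-1}(\{y\in\omega^\omega:y(n)\leq m\})$, which are open in $X$ because $\{y:y(n)\leq m\}$ is clopen in $\omega^\omega$. For fixed $n$ the family $\uc_n=\{U^n_m:m\in\omega\}$ is increasing in $m$. First I would verify that $\uc_n\in\ob$ (indeed $\uc_n\in\Gb$, which is what (2) requires): given $B$ in the compact base $\bk_0$, its image $\psi(B)$ is compact, so the $n$-th projection of $\psi(B)$ is bounded, whence $B\subseteq U^n_m$ for all large $m$; passing from $\bk_0$ to an arbitrary $B\in\bk$ is immediate by cofinality of the base, and since the $U^n_m$ increase, cofinitely many members of $\uc_n$ contain $B$, giving a $\gamma_\bk$-cover.

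The genuinely new point is securing infinitely many ``winning'' indices per point, which I would obtain by applying the selection principle to tails. For each $j\in\omega$, feed the sequence $(\uc_n)_{n\geq j}$ into $\Sf(\ob,\ob)$ (resp.\ $\Uf(\Gb,\ob)$) to obtain finite selections whose unions form a $\bk$-cover of $X$; since each $\uc_n$ is increasing, this cover is realised as $\{U^n_{k_n}:n\geq j\}$ for suitable $k_n$, and being a $\bk$-cover it is in particular an open cover of $X$ (each singleton lies in $\bk$). Setting $g_j(n)=k_n+1$, the covering property yields, for every $x$, some $n\geq j$ with $\psi(x)(n)\leq k_n<g_j(n)$. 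I would then amalgamate by $g(n)=\max\{g_j(n):j\leq n\}$; for each fixed $j$ there is $n\geq j$ with $\psi(x)(n)<g_j(n)\leq g(n)$, so such $n$ are unbounded, hence infinite, proving $g\not\leq^*\psi(x)$.

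The main obstacle is the technical requirement that a $\bk$-cover satisfy $X\notin\uc_n$: if $\psi(X)$ is bounded in coordinate $n$, some $U^n_m$ equals $X$ and then $\uc_n\notin\ob$. I would dispose of this by a dichotomy on $J=\{n:\psi(X)\text{ is unbounded in coordinate }n\}$. If $\omega\setminus J$ is infinite, then a $g$ strictly exceeding the finite coordinate-bounds on $\omega\setminus J$ already witnesses non-domination, with no appeal to the hypothesis. Otherwise $J$ is cofinite, and after discarding the finitely many exceptional coordinates (harmless for $\leq^*$, since we merely set $g(n)=0$ there) every $U^n_m$ is a proper subset of $X$, so each $\uc_n$ is a legitimate element of $\ob$ (resp.\ $\Gb$) and the tail argument applies verbatim. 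For (2) I would additionally note that the $\Uf(\Gb,\ob)$ alternative ``$\bigcup\vc_n=X$ for some $n$'' again forces a bounded coordinate and is absorbed into the same dichotomy.
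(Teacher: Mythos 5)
Your argument is correct, but it takes a genuinely different route from the paper's. The paper first transfers the hypothesis to the image: by Proposition~\ref{Pf1}, $\psi(X)$ satisfies $\Sf(\oc_{\psi(\bk)},\oc_{\psi(\bk)})$ (resp.\ the $\Uf$ variant), and then applies it \emph{once}, on the image side, to the covers $\uc_n=\{U^n_k:k\in\omega\}$ with $U^n_k=\{g\in\omega^\omega:g(n)\leq k\}$, setting $\varphi(n)=\max\{k:U^n_k\in\vc_n\}$. The crucial point that you manufacture with the tail-and-amalgamation trick --- infinitely many coordinates $n$ at which $\psi(x)(n)$ falls below the selected bound --- the paper extracts directly from the bornological structure: by Lemma~\ref{lemma1}, every $B\in\psi(\bk)$ is contained in \emph{infinitely many} members of the selected $\psi(\bk)$-cover $\bigcup_{n\in\omega}\vc_n$, and since each $\vc_n$ is finite these members must occur in infinitely many distinct $\vc_n$'s; applying this to a bounded set $B_0$ containing $f$ yields $f(n)\leq\varphi(n)$ for infinitely many $n$ in one stroke. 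Thus your countably many applications of the selection principle to the tails $(\uc_n)_{n\geq j}$, followed by the amalgamation $g(n)=\max\{g_j(n):j\leq n\}$, rebuild by hand what Lemma~\ref{lemma1} gives for free; this is the classical Hurewicz--Menger mechanism rather than the bornological one. The cost is length and repetition; the gain is self-containedness and care: you use neither Proposition~\ref{Pf1} nor Lemma~\ref{lemma1}, your pulled-back covers live on $X$ itself, and your dichotomy on $J=\{n\in\omega:\psi(X)\text{ is unbounded in coordinate }n\}$ explicitly disposes of the requirement $X\notin\uc$ in the definition of a $\bk$-cover (and of the alternative $\bigcup\vc_n=X$ in the $\Uf$ principle) --- degenerate cases that the paper's own proof passes over in silence.
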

\begin{proof}
We prove only $(1)$ because the proof of $(2)$ is similar. Let $X$ satisfy $\Sf(\ob,\ob)$ and $\psi:X\to \omega^\omega$ be a continuous function. By Proposition~\ref{Pf1}, $\psi(X)$ satisfies $\Sf(\oc_{\psi(\bk)},\oc_{\psi(\bk)})$. Consider $\uc_n=\{U^n_k:k\in \omega\}$, where $U^n_k=\{g\in\omega^\omega:g(n)\leq k\}$ for $n,k\in \omega$. Clearly, $\uc_n$ is an open $\bk$-cover of $\psi(X)$. Applying $\Sf(\oc_{\psi(\bk)},\oc_{\psi(\bk)})$, choose a finite subset $\vc_n$ of $\uc_n$ for each $n$ for which $\cup_{n\in \omega}\vc_n$ is an open $\psi(\bk)$-cover of $\psi(X)$. Define a function $\varphi:\omega\rightarrow \omega$ by $\varphi(n)=\max\{k\in \omega:U^n_k\in \vc_n\}$ for each $n\in\omega$. We now prove that for any $f\in \varphi(X)$, $f(n)\leq \varphi(n)$ for infinitely many $n\in \omega$. Let $f\in \psi(X)$. Choose a $B_0\in \varphi(\bk)$ such that $f\in B_0$. Since $\cup_{n\in \omega}\vc_n$ is an open $\psi(\bk)$-cover of $\psi(X)$,  $B_0\subseteq U^n_k$ for some $U^n_k\in \vc_n$ for infinitely many $n\in \omega$. Therefore $f\in U^n_k$ for some $U^n_k\in \vc_n$ for infinitely many $n\in \omega$. Consequently, $f(n)\leq \varphi(n)$ for infinitely many $n\in \omega$. Hence $\psi(X)$ is not dominating.
\end{proof}

We now present some observations on splittability in relation to bornological covers. The relations among $\Split(\mathcal A,\mathcal B)$, where $\mathcal{A},\mathcal{B}\in\{\ob,\Gb\}$ are described in Figure~\ref{splitdiag}. Note that every space satisfies $\Split(\Gb,\Gb)$. Using \cite[Proposition 5.4]{am}, we obtain the following.
\begin{Prop}
\label{Ps1}
Let $\bk$ be a bornology on a topological space $X$ with a closed base. The  following assertions are equivalent.\\
\noindent$(1)$ $X$ is a $\S1(\ob,\Gb)$.\\
\noindent$(2)$ $X$ satisfies $\Split(\ob,\Gb)$.
\end{Prop}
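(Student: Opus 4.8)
The plan is to route both implications through the characterization of $\S1(\ob,\Gb)$ recorded in \cite[Proposition 5.4]{am}, namely that $X$ satisfies $\S1(\ob,\Gb)$ if and only if every open $\bk$-cover of $X$ contains a $\gb$-subcover. I will refer to this latter condition as property $(\star)$. With $(\star)$ in hand, the proposition reduces to a short splitting argument and a triviality, so neither direction requires building selections from scratch.

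For $(1)\Rightarrow(2)$, I would fix an arbitrary open $\bk$-cover $\uc$ of $X$ and aim to produce two disjoint $\gb$-covers inside it. Since $X$ satisfies $\S1(\ob,\Gb)$, property $(\star)$ supplies an infinite subfamily $\wc\subseteq \uc$ that is a $\gb$-cover; as a $\gb$-cover it is indexed by $\omega$, so I may enumerate it as $\wc=\{W_n:n\in\omega\}$ with the $W_n$ pairwise distinct. I then split by parity, setting $\vc_1=\{W_{2n}:n\in\omega\}$ and $\vc_2=\{W_{2n+1}:n\in\omega\}$. These are disjoint infinite subfamilies of $\uc$, and by Lemma~\ref{lemma201} each of them is again a $\gb$-cover of $X$, since every infinite subset of a $\gb$-cover is a $\gb$-cover. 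This exhibits the splitting required by $\Split(\ob,\Gb)$.

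For $(2)\Rightarrow(1)$, I would observe that $\Split(\ob,\Gb)$ immediately yields $(\star)$: given any open $\bk$-cover $\uc$, the component $\vc_1$ produced by the splitting is already a $\gb$-subcover of $\uc$. Hence every open $\bk$-cover of $X$ contains a $\gb$-subcover, and \cite[Proposition 5.4]{am} then returns $\S1(\ob,\Gb)$, closing the equivalence.

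The substantive content is essentially absorbed by \cite[Proposition 5.4]{am}; the only genuinely new step is the disjoint halving in $(1)\Rightarrow(2)$, and the one point to be careful about there is that passing from one $\gb$-cover to two disjoint ones must preserve the $\gb$ property. This is exactly what Lemma~\ref{lemma201} guarantees, together with the fact that a $\gb$-cover is by definition countably infinite, so the parity split indeed produces two disjoint infinite halves. I therefore do not anticipate any serious obstacle beyond correctly invoking these two facts.
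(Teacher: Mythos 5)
Your proof is correct and follows essentially the same route as the paper, which proves this proposition precisely by invoking \cite[Proposition 5.4]{am} (the equivalence of $\S1(\ob,\Gb)$ with the $\gb$-space property that every open $\bk$-cover contains a $\gb$-subcover). Your parity-splitting step, justified by Lemma~\ref{lemma201} and the fact that $\gb$-covers are by definition countably infinite, is exactly the detail the paper leaves implicit.
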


\begin{Prop}
\label{Ps2}
Let $\bk$ be a bornology  on a topological space $X$ with a closed base $\bk_0$ and $Y$ be a topological space. Let $g:X\rightarrow Y$ be a continuous function. The following assertions hold.\\
\noindent$(1)$ If $X$ satisfies $\Split(\ob,\ob)$, then $g(X)$ satisfies $\Split(\ogb,\ogb)$.\\
\noindent$(2)$ If $X$ satisfies $\Split(\ob,\Gb)$, then $g(X)$ satisfies $\Split(\ogb,\Ggb)$.
\end{Prop}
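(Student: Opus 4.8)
The plan is to transfer the splitting from $X$ up to $g(X)$ by the pullback device of Lemma~\ref{Lgc1}, exactly in the spirit of Proposition~\ref{Pf1}. Both parts run in parallel, so I would write the argument for $(2)$ in full (it carries the extra $\gamma$-bookkeeping) and then indicate how $(1)$ is the same argument with that bookkeeping deleted.

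For $(2)$, I would start with an arbitrary open $g(\bk)$-cover $\uc$ of $g(X)$. By Lemma~\ref{Lgc1} the family $\vc=\{g^{-1}(U):U\in\uc\}$ is an open $\bk$-cover of $X$. Applying $\Split(\ob,\Gb)$ to $\vc$ yields disjoint families $\vc_1,\vc_2\subseteq\vc$, each a $\gamma_{\bk}$-cover of $X$. The step that needs care is lifting $\vc_1,\vc_2$ back to subfamilies of $\uc$: for each $W\in\vc_i$ I fix a set $\phi_i(W)\in\uc$ with $g^{-1}(\phi_i(W))=W$ (such a set exists since $W\in\vc$), and set $\uc_i=\{\phi_i(W):W\in\vc_i\}$ for $i=1,2$. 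Because $g^{-1}(\phi_i(W))=W$, the assignment $W\mapsto\phi_i(W)$ is injective, so $\uc_i$ is in bijection with $\vc_i$.

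It then remains to verify three things. First, $\uc_1\cap\uc_2=\emptyset$: if $U\in\uc_1\cap\uc_2$ then $g^{-1}(U)$ would lie in both $\vc_1$ and $\vc_2$, contradicting $\vc_1\cap\vc_2=\emptyset$. Second, each $\uc_i$ is infinite (via the bijection with the infinite family $\vc_i$) and consists of proper open subsets of $g(X)$ (openness is inherited from $\uc$, and properness from $g(X)\notin\uc$, which holds since $\uc\in\ogb$). Third, for the $\gamma$-condition I would fix $B\in g(\bk)$ and write $B=g(B')$ with $B'\in\bk$; the key observation is that $B'\subseteq g^{-1}(\phi_i(W))=W$ forces $g(B')\subseteq g(g^{-1}(\phi_i(W)))\subseteq\phi_i(W)$, i.e. $B\subseteq\phi_i(W)$, so $\{U\in\uc_i:B\nsubseteq U\}$ is contained in the image under $\phi_i$ of the finite set $\{W\in\vc_i:B'\nsubseteq W\}$ and hence is finite. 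Thus $\uc_1,\uc_2$ are disjoint $\gamma_{g(\bk)}$-covers of $g(X)$, witnessing $\Split(\ogb,\Ggb)$. For $(1)$ the same $\vc,\vc_1,\vc_2,\uc_1,\uc_2$ work, using $\Split(\ob,\ob)$; one only checks that each $\uc_i$ is an open $g(\bk)$-cover, i.e. for $B=g(B')$ one picks $W\in\vc_i$ with $B'\subseteq W$ and notes $B\subseteq\phi_i(W)\in\uc_i$, dropping the infiniteness and cofiniteness steps.

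The main obstacle, and the reason the explicit selection $\phi_i$ is required rather than the naive $\{U\in\uc:g^{-1}(U)\in\vc_i\}$, is the possible non-injectivity of $U\mapsto g^{-1}(U)$ on $\uc$: distinct members of $\uc$ may share a preimage, which would break the disjointness of $\uc_1,\uc_2$ and, more seriously in $(2)$, the count controlling the $\gamma$-property. Choosing one preimage per $W$ makes $\uc_i$ isomorphic to $\vc_i$ and lets the $\gamma_{\bk}$-structure pass upward verbatim. Everything else is the routine containment bookkeeping $B'\subseteq g^{-1}(U)\Rightarrow g(B')\subseteq U$ already exploited in Proposition~\ref{Pf1}.
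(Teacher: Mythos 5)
Your proposal is correct and follows essentially the same route as the paper: pull the cover back to $X$ via Lemma~\ref{Lgc1}, apply the splitting hypothesis there, and lift the two disjoint subfamilies back into $\uc$. The one place you diverge is the lifting step, and the ``main obstacle'' you identify there is not actually an obstacle: the members of an open $g(\bk)$-cover of $g(X)$ are open subsets \emph{of the space} $g(X)$, and since $g$ maps $X$ onto $g(X)$ we have $g(g^{-1}(U))=U$ for every $U\subseteq g(X)$, so the map $U\mapsto g^{-1}(U)$ is already injective on $\uc$. Consequently the ``naive'' lift $\uc_i=\{U\in\uc: g^{-1}(U)\in\vc_i\}$, which is exactly what the paper uses, is automatically in bijection with $\vc_i$, and disjointness of $\uc_1,\uc_2$ follows from $\vc_1\cap\vc_2=\emptyset$ in any case; your choice function $\phi_i$ simply re-selects this unique preimage and is harmless but superfluous. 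Your verification of the $\gamma_{g(\bk)}$-condition via the equivalence $B\subseteq U \Leftrightarrow B'\subseteq g^{-1}(U)$ is the right bookkeeping and matches what the paper leaves implicit in its ``the proof of $(2)$ is similar.''
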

\begin{proof}
We prove only $(1)$ as the proof of $(2)$ is similar. Let $\uc$ be an open $g(\bk)$-cover of $g(X)$. By Lemma~\ref{Lgc1}, $\vc=\{g^{-1}(U):U\in \uc\}$ is an open $\bk$-cover of $X$. By the given condition, choose open $\bk$-subcovers $\vc_1$ and $\vc_2$ of $\vc$ with $\vc_1\cap \vc_2=\emptyset$. Choose $\uc_i=\{U\in \uc:g^{-1}(U)\in \vc_i\}$, $i=1,2$. Clearly, $\uc_1$, $\uc_2$ are open $g(\bk)$-subcovers of $\uc$ with $\uc_1\cap \uc_2=\emptyset$. Hence $g(X)$ satisfies $\Split(\ogb,\ogb)$.
\end{proof}

\begin{figure}
\begin{adjustbox}{keepaspectratio,center}
\begin{tikzcd}[column sep=5ex,row sep=5ex,arrows={crossing over}]
\Split(\ob,\ob)\arrow[r]&\Split(\Gb,\ob)&\\
\Split(\ob,\Gb)\arrow[u]\arrow[r]&\Split(\Gb,\Gb)\arrow[u,equals]&
\end{tikzcd}
\end{adjustbox}
\caption{}
\label{splitdiag}
\end{figure}

\subsection{Certain observations on selection principles related to cardinalities}
\begin{Th}
\label{Tcard1}
Let $\bk$ be a bornology on a topological space $X$ with a closed base $\bk_0$. Let $X$ be a $\bk$-Lindel\"{o}f space. The following statements hold.\\
\noindent$(1)$ If $|\bk_0|<\cov(\mc)$, then $X$ satisfies $\S1(\ob,\ob)$.\\
\noindent$(2)$ If $|\bk_0|<\df$, then $X$ satisfies $\Sf(\ob,\ob)$. \\
\noindent$(3)$ If $|\bk_0|<\pf$, then $X$ satisfies $\S1(\ob,\Gb)$.
\end{Th}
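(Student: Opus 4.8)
The plan is to treat all three parts through a common template: use $\bk$-Lindel\"ofness to replace each given cover by a countable one, encode each base element $B\in\bk_0$ as a combinatorial object (a function in $\omega^\omega$ or an infinite subset of $\omega$), and then feed the resulting family, whose size is $|\bk_0|$, into the combinatorial principle governing the relevant cardinal. Throughout, since $X$ is $\bk$-Lindel\"of, given a sequence $\{\uc_n:n\in\omega\}$ of open $\bk$-covers I may assume each $\uc_n=\{U^n_m:m\in\omega\}$ is countable; moreover, by Lemma~\ref{lemma1}, for every $B\in\bk$ and every $n$ the set $\{m:B\subseteq U^n_m\}$ is infinite, so that $f_B(n)=\min\{m:B\subseteq U^n_m\}$ is well defined.

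For (1) I would take the family $\{f_B:B\in\bk_0\}\subseteq\omega^\omega$, of cardinality $|\bk_0|<\cov(\mc)$. The property of $\cov(\mc)$ stated in the preliminaries yields an $h\in\omega^\omega$ with $\{n:h(n)=f_B(n)\}$ infinite for every $B\in\bk_0$. Selecting $V_n=U^n_{h(n)}\in\uc_n$, for each $B\in\bk_0$ one gets $B\subseteq U^n_{f_B(n)}=V_n$ for infinitely many $n$; passing to the base, the same holds for every $B\in\bk$, so $\{V_n:n\in\omega\}\in\ob$ and $\S1(\ob,\ob)$ holds. For (2), with the same $f_B$, the family $\{f_B:B\in\bk_0\}$ has size $<\df$ and is therefore not dominating, so there is a $g\in\omega^\omega$ with $g(n)>f_B(n)$ for infinitely many $n$, for each $B$. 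Taking the finite sets $\vc_n=\{U^n_m:m\le g(n)\}\subseteq\uc_n$, for each $B$ we have $U^n_{f_B(n)}\in\vc_n$ with $B\subseteq U^n_{f_B(n)}$ for infinitely many $n$; hence $\bigcup_{n}\vc_n\in\ob$ and $\Sf(\ob,\ob)$ holds.

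For (3) the governing cardinal is $\pf$, and the clean core is the single-cover statement: given one countable open $\bk$-cover $\uc=\{U_m:m\in\omega\}$, set $A_B=\{m:B\subseteq U_m\}$ for $B\in\bk_0$. Each $A_B$ is infinite and the family $\{A_B:B\in\bk_0\}$ has the strong finite intersection property, since $A_{B_1}\cap\cdots\cap A_{B_k}=A_{B_1\cup\cdots\cup B_k}\supseteq A_{B'}$ for any base element $B'\supseteq B_1\cup\cdots\cup B_k$. As $|\bk_0|<\pf$ there is a pseudointersection $P$, and $\{U_m:m\in P\}$ is a $\gamma_\bk$-cover contained in $\uc$. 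To obtain the genuine sequential principle $\S1(\ob,\Gb)$, I would combine the countably many covers into a single open $\bk$-cover $\mathcal W$ whose members are the initial-segment diagonal intersections $\bigcap_{i\le k}U^i_{m_i}$ (each level being a $\bk$-cover by Lemma~\ref{lemma3}), run the pseudointersection argument on $\mathcal W$ together with the auxiliary sets ``level $\ge k$'' (adding only countably many sets, which keeps the family of size $<\pf$ since $\pf$ is uncountable), so that the extracted $\gamma_\bk$-subcover uses blocks of lengths tending to infinity, and finally decode this subcover into a round-by-round selection by reading off the $n$-th coordinate of a block of length exceeding $n$.

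The routine parts are the SFIP computation, the use of Lemma~\ref{lemma1}, and the verification that each decoded selection is a cover of the required type. The main obstacle is precisely the decoding in (3): the pseudointersection only produces a $\gamma_\bk$-\emph{subfamily} of the combined cover, whereas $\S1(\ob,\Gb)$ demands exactly one chosen set from \emph{every} round $\uc_n$. Forcing the block-lengths to grow (via the ``level $\ge k$'' sets) is what permits a single set to be read off in each round while still yielding the eventual-containment property defining $\gamma_\bk$-covers; this adaptation of the Gerlits--Nagy decoding to the bornological setting is the step demanding the most care.
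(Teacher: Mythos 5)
Your proposal is correct, and parts (1) and (2) are essentially the paper's own proofs: the same functions $h_B(n)=\min\{m:B\subseteq U^n_m\}$ built from a countable enumeration, the same appeal to the $\cov(\mc)$ infinite-equality property and to non-domination below $\df$, and the same passage through a base element $B_0\supseteq B$ to cover all of $\bk$. Part (3) shares its combinatorial core with the paper --- the sets $Q_B=\{m:B\subseteq U_m\}$, the strong finite intersection property via $Q_{B_1}\cap\dots\cap Q_{B_k}\supseteq Q_{B_1\cup\dots\cup B_k}$, and a pseudointersection below $\pf$ --- but diverges at the final step. The paper stops once a single open $\bk$-cover is shown to contain a $\gb$-subcover, i.e.\ once $X$ is shown to be a $\gb$-space, and then simply cites \cite[Proposition 5.4]{am} for the implication from the $\gb$-space property to $\S1(\ob,\Gb)$. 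You instead inline a Gerlits--Nagy-style decoding: form the diagonal-intersection cover whose levels are $\bk$-covers by Lemma~\ref{lemma3}, adjoin the countably many level sets $L_k$ (harmless for the cardinality bound since $\pf\geq\omega_1$), verify the strong finite intersection property of $\{Q_B\}\cup\{L_k\}$ --- which does hold, because for each $B$ and $k$ infinitely many members of level at least $k$ contain $B$ by Lemma~\ref{lemma1} --- and then read off $n$-th coordinates of blocks of growing length along an increasing choice of indices from the pseudointersection. This decoding is sound: each chosen $V_n\in\uc_n$ contains a block that eventually contains any fixed $B\in\bk$, so the selection is a $\gb$-cover. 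The trade-off is clear: the paper's route is shorter but leans on the function-space machinery behind \cite[Proposition 5.4]{am}, while yours is longer but self-contained, in effect re-proving the bornological Gerlits--Nagy implication rather than quoting it.
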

\begin{proof}
$(1)$. Let $\{\uc_n:n\in \omega\}$ be a sequence of open $\bk$-covers of $X$ and let $\uc_n=\{U^n_m:m\in \omega\}$ for each $n\in \omega$. For each $n\in \omega$ and $B\in \bk_0$ $B\subseteq U^n_m$ for some $m\in \omega$. Define a function $h_B:\omega\rightarrow \omega$ by $h_B(n)=\min\{m\in \omega:B\subseteq U^n_m\}$ for each $n\in \omega$. Take the collection $\{h_B:B\in \bk_0\}$. Since $|\bk_0|<\cov(\mc)$, there is an $h:\omega\rightarrow\omega$ such that $\{n\in \omega:h_B(n)=h(n)\}$ is infinite for each $B\in \bk_0$. We prove that $\{U^n_{h(n)}:n\in \omega\}$ is an open $\bk$-cover. Let $B\in \bk$ and choose $B_0\in \bk_0$ with $B \subseteq B_0$. Then $B_0\subseteq U^n_{h_{B_0}(n)}$. Since the set $\{n:h_{B_0}(n)=h(n)\}$ is infinite, choose $k\in \omega$ with $h_{B_0}(k)=h(k)$ and $B \subseteq U^k_{h(k)}$. This shows that $\{U^n_{h(n)}:n\in \omega\}$ is an open $\bk$-cover of $X$ witnessing $\S1(\ob,\ob)$.

$(2)$. Let $\{\uc_n:n\in \omega\}$ be a sequence of open $\bk$-covers of $X$ and let $\uc_n=\{U^n_m:m\in \omega\}$ for each $n$. For each $n\in \omega$ and $B\in\bk_0$ $B\subseteq U^n_m$ for some $m\in \omega$. Define a function $h_B:\omega\rightarrow \omega$ by $h_B(n)=\min\{m\in \omega:B\subseteq U^n_m\}$. Take the collection $\{h_B:B\in \bk_0\}$. As $|\bk_0|<\df$, $\{h_B:B\in \bk_0\}$ is not dominating. Therefore we choose a function $h:\omega\rightarrow \omega$ satisfying $h_B(n)<h(n)$ for infinitely many $n$ and for any $B\in \bk_0$. Set $\vc_n=\{U^n_m:m\le  h(n)\}$ for each $n$. It is easy to see that $\vc_n\subseteq \uc_n$ is a finite for each $n$ and $\cup_{n\in \omega}\vc_n$ is an open $\bk$-cover of $X$ witnessing $\Sf(\ob,\ob)$.

$(3)$. We prove that $X$ is a $\gb$-space. Let $\uc$ be an open $\bk$-cover of $X$ and let $\uc=\{U_n:n\in \omega\}$. For $B\in \bk$ $B\subseteq U_n$ for infinitely many $n$. Set $Q_B=\{n\in \omega:B\subseteq U_n\}$ for each $B\in \bk$.  $Q_B\subseteq \omega$ is infinite. Take the collection $\qc=\{Q_B:B\in \bk_0\}$.  Let $\{Q_{B_1},\dots,Q_{B_k}\}$ be a finite collection from $\qc$. As $B_1\cup\dots \cup B_k\in \bk$, we have $B_1\cup\dots \cup B_k\subseteq U_n$ for infinitely many $n$. Consequently, $Q_{B_1}\cap\dotsc \cap Q_{B_k}$ is infinite. Thus any finite subfamily of $\qc$ has infinite intersection. As $|\bk_0|<\pf$, choose an infinite subset $P$ of $\omega$ for which $P\setminus Q_B$ is finite for each $B\in \bk_0$. Let $P=\{n_k:k\in \omega\}$ and take $\wc=\{U_{n_k}\in \uc:k\in \omega\}$. We now prove that $\wc$ is a $\gb$-cover. For $B\in \bk_0$ we have that $P\setminus Q_B$ is finite and so we find $k_0\in \omega$ satisfying $B\subseteq U_{n_k}$ for all $k\geq k_0$. Therefore  $\wc\subseteq \uc$ is a $\gb$-cover of $X$. Hence $X$ is a $\gb$-space and from \cite[Proposition 5.4]{am} we obtain that $X$ satisfies $\S1(\ob,\Gb)$.
\end{proof}

\begin{Th}
\label{Tcard2}
Let $\bk$ be a bornology on a topological space $X$ with a closed base $\bk_0$. The following assertions hold.\\
\noindent$(1)$ If $|\bk_0|<\df$, then $X$ satisfies $\S1(\Gb,\ob)$.\\
\noindent$(2)$ If $|\bk_0|<\df$, then $X$ satisfies $\Uf(\Gb,\ob)$.
\end{Th}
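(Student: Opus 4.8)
The plan is to adapt the dominating-number argument from the proof of Theorem~\ref{Tcard1}(2), but to exploit the stronger ``eventually contained'' behaviour of $\gamma_\bk$-covers so that a single well-chosen index at each level suffices, yielding an $\S1$-style selection rather than finite subsets. I note first that no $\bk$-Lindel\"{o}f hypothesis is needed here (in contrast to Theorem~\ref{Tcard1}): a $\gamma_\bk$-cover is countable by definition, so for a sequence $\{\uc_n:n\in\omega\}$ of $\gamma_\bk$-covers I may immediately write $\uc_n=\{U^n_m:m\in\omega\}$ without any reduction.

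For part (1), for each $B\in\bk_0$ I would define a threshold function $h_B:\omega\to\omega$ by letting $h_B(n)$ be the least $m_0$ with $B\subseteq U^n_m$ for all $m\geq m_0$; this is well defined precisely because each $\uc_n$ is a $\gamma_\bk$-cover. Since $|\{h_B:B\in\bk_0\}|\leq|\bk_0|<\df$, the family $\{h_B:B\in\bk_0\}$ is not dominating, so there is $g\in\omega^\omega$ with $g\not\leq^* h_B$ for every $B\in\bk_0$; equivalently $g(n)>h_B(n)$ for infinitely many $n$. I would then set $V_n=U^n_{g(n)}\in\uc_n$ for each $n$ and claim $\{V_n:n\in\omega\}\in\ob$. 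Indeed, given $B\in\bk$, pick $B_0\in\bk_0$ with $B\subseteq B_0$; for each of the infinitely many $n$ with $g(n)>h_{B_0}(n)$ one has $B\subseteq B_0\subseteq U^n_{g(n)}=V_n$, so $B$ is contained in infinitely many $V_n$, and Lemma~\ref{lemma1} confirms $\{V_n:n\in\omega\}\in\ob$. Since the $V_n$ are proper open sets, $X\notin\{V_n:n\in\omega\}$. This establishes $\S1(\Gb,\ob)$, hence (1).

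For part (2) I would simply observe that $\S1(\Gb,\ob)$ implies $\Uf(\Gb,\ob)$: taking the singleton finite sets $\vc_n=\{V_n\}$ from part (1) gives $\cup\vc_n=V_n$ and $\{\cup\vc_n:n\in\omega\}=\{V_n:n\in\omega\}\in\ob$. Thus (2) follows at once from (1) (and could equally be proved directly by the same threshold construction).

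The only delicate point is the bookkeeping with the inequalities: the non-dominating family must be escaped with respect to $\leq^*$ rather than $\leq$, so that $g$ exceeds each $h_B$ \emph{infinitely often}. This is exactly what upgrades a single selected index per level into a genuine $\bk$-cover, and it is also where the passage from an arbitrary $B\in\bk$ to a base element $B_0\in\bk_0$ is needed.
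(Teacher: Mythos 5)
Your proof is correct and follows essentially the same route as the paper's: the same threshold functions $h_B(n)=\min\{k\in\omega:\ B\subseteq U^n_m \text{ for all } m\geq k\}$, the same appeal to $|\bk_0|<\df$ to obtain a function exceeding every $h_B$ infinitely often (with respect to $\leq^*$), and the same selection $V_n=U^n_{g(n)}$ checked against base elements $B_0\in\bk_0$. The only cosmetic difference is in part (2), where the paper runs the same construction directly ("the proof of (2) is similar") while you deduce $\Uf(\Gb,\ob)$ from $\S1(\Gb,\ob)$ via singleton selections; both are valid, and your reduction is if anything tidier.
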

\begin{proof}
$(1)$. Let $\{\uc_n:n\in \omega\}$ be a sequence of $\gb$-covers of $X$, where $\uc_n=\{U^n_m:m\in \omega\}$ for each $n$. For each $B\in\bk_0$ there exists $m_0\in \omega$ satisfying $B\subseteq U^n_m$ for all $m\geq m_0$. Define a function $h_B:\omega\rightarrow \omega$ by $h_B(n)=\min\{k\in\omega: \text{ for all } m\ge k,B\subseteq U^n_m\}$. Take the collection $\{h_B:B\in \bk_0\}$. As $|\bk_0|<\df$, $\{h_B:B\in \bk_0\}$ is not dominating. Therefore we choose a function $h:\omega\rightarrow \omega$ satisfying $h_B(n)<h(n)$ for infinitely many $n$ and for any $B\in \bk_0$. It is easy to see that $\{U^n_{h(n)}:n\in \omega\}$ is an open $\bk$-cover of $X$ witnessing $\S1(\Gb,\ob)$. The proof of $(2)$ is similar.
\end{proof}

\begin{Th}
\label{Tcard2}
Let $\bk$ be a bornology on a topological space $X$ with a closed base $\bk_0$. The following assertions hold.\\
\noindent$(1)$ If $|\bk_0|<\bb$, then $X$ satisfies $\S1(\Gb,\Gb)$.\\
\noindent$(2)$ If $|\bk_0|<\bb$, then $X$ satisfies $\Uf(\Gb,\Gb)$.\\
\noindent$(3)$ If $|\bk_0|<\bb$, then for any sequence $\{\uc_n:n\in \omega\}$ of open $\bk$-covers of $X$ there is a finite subset $\vc_n$ of $\uc_n$ such that for a $B\in \bk$ there exists $n_0\in \omega$ for which $B\subseteq V$ for some $V\in \vc_n$ for all $n\geq n_0$.
\end{Th}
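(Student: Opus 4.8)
The plan is to deduce all three parts from the defining property of the bounding number: every subfamily of $\omega^\omega$ of cardinality strictly less than $\bb$ is $\leq^*$-bounded. In each case I encode the given covers by a family of functions indexed by the base $\bk_0$, and the cardinality hypothesis then produces a single function that simultaneously dominates all of them and hence selects the required subcover.

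For $(1)$, fix a sequence $\{\uc_n:n\in\omega\}$ of $\gb$-covers and enumerate each as $\uc_n=\{U^n_m:m\in\omega\}$. Using the $\bk$-sequence property, for each $B\in\bk_0$ define $h_B:\omega\to\omega$ by $h_B(n)=\min\{k\in\omega: B\subseteq U^n_m \text{ for all } m\geq k\}$; this is well defined exactly because each $\uc_n$ is a $\gb$-cover. Since $|\{h_B:B\in\bk_0\}|\leq|\bk_0|<\bb$, this family is bounded, so there is $g\in\omega^\omega$ with $h_B\leq^* g$ for every $B\in\bk_0$. Putting $V_n=U^n_{g(n)}$, I would check that $\{V_n:n\in\omega\}\in\Gb$: given $B\in\bk$ choose $B_0\in\bk_0$ with $B\subseteq B_0$; for all sufficiently large $n$ we have $g(n)\geq h_{B_0}(n)$, whence $B\subseteq B_0\subseteq U^n_{g(n)}=V_n$. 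This witnesses $\S1(\Gb,\Gb)$.

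Part $(2)$ is immediate from $(1)$, since $\S1(\Gb,\Gb)$ trivially implies $\Uf(\Gb,\Gb)$: given $\gb$-covers $\{\uc_n\}$, apply $(1)$ to obtain $V_n\in\uc_n$ with $\{V_n:n\in\omega\}\in\Gb$, and take the singletons $\vc_n=\{V_n\}$, so that $\{\cup\vc_n:n\in\omega\}=\{V_n:n\in\omega\}$ is a $\gb$-cover. For $(3)$, I would run the same boundedness scheme but adapted to $\bk$-covers. Enumerating each open $\bk$-cover as $\uc_n=\{U^n_m:m\in\omega\}$, set $h_B(n)=\min\{m\in\omega:B\subseteq U^n_m\}$ for $B\in\bk_0$, which is well defined because $\uc_n$ is a $\bk$-cover. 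Boundedness again yields $g\in\omega^\omega$ with $h_B\leq^* g$ for all $B\in\bk_0$, and the finite sets $\vc_n=\{U^n_m:m\leq g(n)\}$ then do the job: for $B\in\bk$ with $B\subseteq B_0\in\bk_0$ and all sufficiently large $n$, the member $U^n_{h_{B_0}(n)}$ belongs to $\vc_n$ and contains $B$.

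Each argument is routine once the encoding is fixed; the only point needing care is the verification in $(1)$ that $\{V_n:n\in\omega\}$ is a genuine $\gb$-cover. Properness of the members is inherited from the $\uc_n$, and I would argue that the range is infinite by noting that a $\bk$-sequence with finite range would force one of its members to contain every $B\in\bk$, hence to equal $X$, contradicting that its members are proper open subsets. I expect this to be the main (albeit mild) obstacle; the remainder reduces to bookkeeping with the $\leq^*$ relation and the cofinality of $\bk_0$ in $\bk$.
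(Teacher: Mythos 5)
Your proposal is correct and follows essentially the same route as the paper: index functions $h_B$ by the members of the base $\bk_0$, use $|\bk_0|<\bb$ to obtain a single $\leq^*$-bound, and read off the selection, with part (3) handled by the initial segments $\{U^n_m : m\leq g(n)\}$ exactly as in the paper's proof. Your two small additions—deriving (2) from (1) via singleton selections and explicitly verifying that the selected family is infinite (hence a genuine $\gb$-cover)—are sound and in fact fill in details the paper leaves as ``similar'' or ``easy to see.''
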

\begin{proof}
$(1)$. Let $\{\uc_n:n\in \omega\}$ be a sequence of $\gb$-covers of $X$, where $\uc_n=\{U^n_m:m\in \omega\}$ for each $n$. For each $n\in \omega$ and $B\in\bk_0$ there exists $m_0\in \omega$ satisfying $B\subseteq U^n_m$ for all $m\geq m_0$. Define a function $h_B:\omega\rightarrow \omega$ by $h_B(n)=\min\{k\in \omega: \text{ for all } m\ge k,B\subseteq U^n_m\}$. Take the collection $\{h_B:B\in \bk_0\}$. As $|\bk_0|<\bb$, choose a function $h:\omega\rightarrow \omega$ satisfying $h_B\leq ^*h$ for all $B\in \bk_0$. It is easy to see that $\{U^n_{h(n)}:n\in \omega\}$ is a $\gb$-cover of $X$ witnessing $\S1(\Gb,\Gb)$.

We show $(3)$ as the proof of $(2)$ is similar. Let $\{\uc_n:n\in \omega\}$ be a sequence of open $\bk$-covers of $X$ and let $\uc_n=\{U^n_m:m\in \omega\}$. For each $n\in \omega$ and $B\in\bk_0$ $B\subseteq U^n_m$ for some $m\in \omega$. Define a function $h_B:\omega\to \omega$ by $h_B(n)=\min\{m\in \omega:B\subseteq U^n_m\}$ for each $B\in \bk_0$. Taking the collection $\{h_B:B\in \bk_0\}$ and applying $|\bk_0|<\bb$, choose $h:\omega\to \omega$ with $h_B\leq^* h$ for all $B\in \bk_0$. Set $\vc_n=\{U^n_k:k\leq f(n)\}$ for each $n$. Then for a $B\in \bk_0$ there is $n_0\in \omega$ with $h_B(n)\leq h(n)$ for all $n\geq n_0$. Consequently, $B\subseteq U^n_k$ for some $U^n_k\in \vc_n$ for all $n\geq n_0$.
\end{proof}

\begin{Prop}
\label{Ts3}
Let $\bk$ be a bornology on a topological space $X$ with a closed base $\bk_0$ and let $X$ be a $\bk$-Lindel\"{o}f space. If $|\bk_0|<\rf$, then $X$ satisfies $\Split(\ob,\ob)$.
\end{Prop}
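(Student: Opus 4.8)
The plan is to reduce the given open $\bk$-cover to a countable one by $\bk$-Lindel\"{o}fness and then split it according to a single infinite set $R\subseteq\omega$ extracted from the reaping number. First I would take an arbitrary open $\bk$-cover $\uc$ of $X$. Since $X$ is $\bk$-Lindel\"{o}f, $\uc$ contains a countable $\bk$-subcover, and replacing $\uc$ by this subcover is harmless: any disjoint pair of $\bk$-subcovers of the subcover is also a disjoint pair of $\bk$-subcovers of $\uc$, and the required inclusion $\vc_1,\vc_2\subseteq\uc$ is preserved. A finite family cannot be a $\bk$-cover (if $\uc=\{U_1,\dots,U_k\}$ with each $U_i\neq X$, pick $x_i\notin U_i$; then the finite set $\{x_1,\dots,x_k\}\in\bk$ lies in no $U_i$), so the subcover is infinite and may be enumerated faithfully as $\uc=\{U_n:n\in\omega\}$ with pairwise distinct $U_n$.

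Next, for each $B\in\bk_0$ I would set $Q_B=\{n\in\omega:B\subseteq U_n\}$. By Lemma~\ref{lemma1} each $Q_B$ is an infinite subset of $\omega$, so the family $\qc=\{Q_B:B\in\bk_0\}\subseteq[\omega]^\omega$ satisfies $|\qc|\leq|\bk_0|<\rf$. By the definition of the reaping number, a family of infinite sets of cardinality strictly below $\rf$ cannot be one of the families witnessing $\rf$, so there is an infinite $R\subseteq\omega$ that reaps $\qc$: for every $B\in\bk_0$, both $Q_B\cap R$ and $Q_B\setminus R$ are infinite.

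I would then put $\vc_1=\{U_n:n\in R\}$ and $\vc_2=\{U_n:n\in\omega\setminus R\}$. Both are subfamilies of $\uc$, and since the enumeration is faithful and $R\cap(\omega\setminus R)=\emptyset$, we get $\vc_1\cap\vc_2=\emptyset$. To check $\vc_1\in\ob$, fix $B\in\bk$ and choose $B_0\in\bk_0$ with $B\subseteq B_0$; as $Q_{B_0}\cap R$ is infinite it is nonempty, so some $n\in R$ gives $B\subseteq B_0\subseteq U_n\in\vc_1$, while $X\notin\vc_1$ because $\vc_1\subseteq\uc$. The symmetric argument using $Q_{B_0}\setminus R$ yields $\vc_2\in\ob$. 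Hence $\vc_1,\vc_2$ witness $\Split(\ob,\ob)$.

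The step I expect to matter most is not analytic but structural: one must (i) invoke the reaping number in the correct direction, namely that every family of infinite subsets of $\omega$ of size below $\rf$ is reaped by a single infinite $R$, and (ii) secure genuine disjointness of $\vc_1$ and $\vc_2$, which is exactly why a faithful enumeration of the countable subcover is required; were the listing to repeat members, an index in $R$ and one in $\omega\setminus R$ could name the same open set and spoil $\vc_1\cap\vc_2=\emptyset$. The reduction to a countable cover via $\bk$-Lindel\"{o}fness is the enabling move, since reaping is a statement about subsets of $\omega$ and only becomes applicable once the cover is indexed by $\omega$.
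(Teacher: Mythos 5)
Your proof is correct and follows essentially the same route as the paper's: reduce to a countable cover via $\bk$-Lindel\"{o}fness, form the sets $Q_B=\{n:B\subseteq U_n\}$ for $B\in\bk_0$, apply $|\bk_0|<\rf$ to obtain an infinite reaping set, and split the cover along it. Your additional care about a faithful (injective) enumeration and the infiniteness of the countable subcover tightens details the paper leaves implicit, but it is the same argument.
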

\begin{proof}
Let $\uc$ be an open $\bk$-cover of $X$ and let $\uc=\{U_n:n\in \omega\}$. For $B\in \bk_0$ $B\subseteq U_n$ for infinitely many $n\in \omega$. Let $Q_B=\{n\in \omega:B\subseteq U_n\}$. Clearly, $Q_B$ is infinite. Take the collection $\{Q_B:B\in \bk_0\}$. As $|\bk_0|<\rf$, there exists an infinite subset $Q$ of $\omega$ satisfying $Q_B\nsubseteq^* Q$ and $Q_B\nsubseteq^* \omega\setminus Q$ for every $B\in \bk_0$. $Q_B\setminus Q$ and $Q_B\cap Q$ are infinite subsets. Now $\{U_n:n\in Q\}$ and $\{U_n:n\not\in Q\}$ are disjoint subsets of $\uc$ and are open $\bk$-covers of $X$. This completes the proof.
\end{proof}

From Theorems \ref{Ta1}, \ref{Ta2} and Theorems \ref{Tcard1}(3), \ref{Tcard2}(1) we obtain the following corollary.
\begin{Cor}
Let $\bk$ be a bornology on a topological space $X$ with a closed base $\bk_0$. The following assertions hold.\\
\noindent$(1)$ If $|\bk_0|<\pf$, then $X$ satisfies $\bigcap_\infty(\ob,\bfGb)$, provided $X$ is $\bk$-Lindel\"{o}f.\\
\noindent$(2)$ If $|\bk_0|<\bb$, then $X$ satisfies $\bigcap_\infty(\Gb,\bfGb)$.
\end{Cor}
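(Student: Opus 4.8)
The plan is to obtain both parts by chaining the cardinal-invariant bounds of Section~4 with the characterisations of $\bigcap_\infty$ provided by Theorems~\ref{Ta1} and~\ref{Ta2}, using only the elementary inclusion $\Gb\subseteq\bfGb$ together with the monotonicity of $\S1$ in its second argument.

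For (1), I would first apply Theorem~\ref{Tcard1}(3): since $X$ is $\bk$-Lindel\"{o}f and $|\bk_0|<\pf$, it satisfies $\S1(\ob,\Gb)$. As every $\gb$-cover is in particular a $\bk$-sequence, we have $\Gb\subseteq\bfGb$, so any selection witnessing $\S1(\ob,\Gb)$ also witnesses $\S1(\ob,\bfGb)$; hence $X$ satisfies $\S1(\ob,\bfGb)$. The implication $(4)\Rightarrow(6)$ of Theorem~\ref{Ta2} then converts this into $\bigcap_\infty(\ob,\bfGb)$, which is the desired assertion.

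For (2), the reasoning is entirely parallel. I would invoke Theorem~\ref{Tcard2}(1) to obtain $\S1(\Gb,\Gb)$ from $|\bk_0|<\bb$, pass to $\S1(\Gb,\bfGb)$ again via $\Gb\subseteq\bfGb$, and finally use the implication $(4)\Rightarrow(6)$ of Theorem~\ref{Ta1} to conclude that $X$ satisfies $\bigcap_\infty(\Gb,\bfGb)$.

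I do not anticipate any substantive obstacle: the corollary is a straightforward composition of already-established implications, and the $\bk$-Lindel\"{o}f hypothesis is needed only in part~(1), precisely where Theorem~\ref{Tcard1}(3) is applied. The single point worth stating explicitly is the monotonicity $\S1(\ac,\bc)\Rightarrow\S1(\ac,\bc')$ whenever $\bc\subseteq\bc'$, which, combined with $\Gb\subseteq\bfGb$, bridges the gap between the $\gb$-covers produced by the cardinal bounds and the $\bk$-sequences appearing in the $\bigcap_\infty$ statements.
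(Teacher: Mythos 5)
Your proposal is correct and follows essentially the same route as the paper: the paper derives the corollary precisely by combining Theorem~\ref{Tcard1}(3) and Theorem~\ref{Tcard2}(1) with the equivalences $(4)\Leftrightarrow(6)$ of Theorems~\ref{Ta2} and~\ref{Ta1}, respectively. Your explicit mention of the bridging step $\S1(\ac,\Gb)\Rightarrow\S1(\ac,\bfGb)$ via $\Gb\subseteq\bfGb$ is a point the paper leaves implicit, but it is exactly the intended argument.
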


\section{Certain observations on $C_{\bk}(X)$ space}
In this section we focus on the function space $C_{\bk}(X)$ endowed with the topology $\tau_{\bk}$ of uniform convergence on $\bk$. We investigate certain topological properties of the space $C_{\bk}(X)$ and their relations with selective bornological covering properties of $X$. In this connection we follow the study of these topological properties in the $C_p(X)$ space and the $C_k(X)$ space endowed with the topology of pointwise convergence and the compact-open topology, respectively. Note that $C_{\bk}(X)$ is homogeneous so while studying local properties of $C_{\bk}(X)$ it is enough to focus on the zero function $\underline{0}$. We first present the following lemma from \cite{am} which we will use frequently.

\begin{Lemma}(\cite[Lemma 3.6]{am})
\label{L1}
Let $\bk$ be a bornology on a Tychonoff space $X$ with a compact base. The following assertions hold.\\
\noindent$(1)$ Let $\uc$ be a family of open subsets of $X$ with $X\not\in \uc$ and let $A=\{f\in C_{\bk}(X): f(X\setminus U)=\{1\} \text{ for some } U\in \uc\}$. $\uc$ is an open $\bk$-cover of $X$ if and only if $A\in \Omega_{\underline{0}}$.\\
\noindent$(2)$ Let $A$ be a subset of $C_{\bk}(X)$ and let $\uc_n(A)=\{f^{-1}(-\frac{1}{n},\frac{1}{n}):f\in A\}$ for $n\in \omega$. If $\underline{0}\in \bar{A}$, then $\uc_n(A)$ is an open $\bk$-cover of $X$.
\end{Lemma}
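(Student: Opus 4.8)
The plan is to handle the two parts separately: part~(2) and the reverse implication of part~(1) are short neighbourhood computations, whereas the forward implication of part~(1) is where the hypotheses (complete regularity of $X$ and compactness of the base) are genuinely used.

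For part~(1), I would first note that $\underline{0}\notin A$: since $X\notin\uc$, every $U\in\uc$ is a proper subset, so $X\setminus U\neq\emptyset$ and $\underline{0}$ takes the value $0\neq 1$ there, witnessing no membership in $A$. Hence the equivalence reduces to showing that $\uc$ is a $\bk$-cover if and only if $\underline{0}\in\overline{A}$. For the forward direction, fix a basic neighbourhood $\langle B,\epsilon\rangle[\underline{0}]$ with $B\in\bk$ and $\epsilon>0$. By cofinality of the compact base I would pick a compact base element $B_0\supseteq B$; since $\langle B_0,\epsilon\rangle[\underline{0}]\subseteq\langle B,\epsilon\rangle[\underline{0}]$ it suffices to meet the smaller set. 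As $\uc$ is a $\bk$-cover there is $U\in\uc$ with $B_0\subseteq U$, so $B_0$ is compact and $X\setminus U$ is a disjoint closed set. The key step is to invoke complete regularity of $X$ to completely separate them, producing $f\in C(X)$ with $f(B_0)=\{0\}$ and $f(X\setminus U)=\{1\}$; then $f\in A$ and $f\in\langle B_0,\epsilon\rangle[\underline{0}]$, so the neighbourhood meets $A$. This yields $\underline{0}\in\overline{A}$.

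For the reverse implication, assume $\underline{0}\in\overline{A}$ and fix $B\in\bk$. Some $f\in A$ lies in $\langle B,\frac12\rangle[\underline{0}]$, so $|f(x)|<\frac12$ on $B$; choosing $U\in\uc$ with $f(X\setminus U)=\{1\}$, each $x\in B$ satisfies $f(x)\neq1$ and hence $x\in U$, so $B\subseteq U$. As $\uc$ consists of open sets and omits $X$, it is an open $\bk$-cover. Part~(2) is then analogous and needs no compactness: fixing $n$ and $B\in\bk$, the set $f^{-1}(-\frac1n,\frac1n)$ is open for every $f$, and since $\underline{0}\in\overline{A}$ some $f\in A$ lies in $\langle B,\frac1n\rangle[\underline{0}]$, whence $|f(x)|<\frac1n$ on $B$ and $B\subseteq f^{-1}(-\frac1n,\frac1n)\in\uc_n(A)$; thus $\uc_n(A)$ has the $\bk$-covering property for every $n$.

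The main obstacle is the forward direction of part~(1): one cannot in general separate an arbitrary $B\in\bk$ from $X\setminus U$ by a continuous function in a Tychonoff space, so the reduction to a compact base element via cofinality, followed by the Urysohn-type separation of a compact set from a disjoint closed set, is the crucial device and the only place the compact base is indispensable. A minor point to flag in part~(2) is the normalisation $X\notin\uc_n(A)$, which could fail if $A$ contained functions with $\sup_X|f|<\frac1n$; this is the usual harmless caveat and does not affect the $\bk$-covering property just established.
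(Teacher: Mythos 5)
Your proof is correct. Note that the paper does not prove this lemma at all --- it is quoted verbatim from Aurichi--Mezabarba \cite[Lemma 3.6]{am} --- so there is no internal proof to compare against; your argument is the standard one (and essentially the one in the cited source): the reverse implication of (1) and all of (2) are direct neighbourhood computations, while the forward implication of (1) correctly reduces, via cofinality of the base, to separating a \emph{compact} base element $B_0\subseteq U$ from the closed set $X\setminus U$ by a continuous function, which is exactly where the Tychonoff hypothesis and the compactness of the base are used. Your closing caveat about the normalisation $X\notin\uc_n(A)$ in part (2) is a genuine (and harmless) imprecision in the statement as quoted, and flagging it is appropriate.
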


\begin{Lemma}
\label{L3}
Let $\bk$ be a bornology on a topological space $X$ with a closed base. Let $\{f_n:n\in \omega\}$ be a sequence in $C_{\bk}(X)$ converge to the zero function $\underline{0}$, then for any $k\in \omega$ the sequence $\{f^{-1}_n(-\frac{1}{k},\frac{1}{k}):n\in \omega\}$, where $X\neq f^{-1}_n(-\frac{1}{k},\frac{1}{k})$ for any $n$, is a $\gb$-cover of $X$.
\end{Lemma}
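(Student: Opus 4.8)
The plan is to verify directly that the collection $\uc=\{U_n:n\in\omega\}$, where $U_n=f_n^{-1}(-\frac{1}{k},\frac{1}{k})$, meets every requirement in the definition of a $\gb$-cover: its members are proper open subsets of $X$, it covers $X$, it is infinite, and for each $B\in\bk$ there is an $n_0$ with $B\subseteq U_n$ for all $n\geq n_0$. First, each $U_n$ is open because $f_n$ is continuous and $(-\frac{1}{k},\frac{1}{k})$ is open in $\mathbb{R}$, while properness $U_n\neq X$ is part of the hypothesis.

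The crucial step is the $\gb$-cover containment, which I expect to obtain simply by unpacking the convergence $f_n\to\underline{0}$ in $\tau_{\bk}$. Fixing $B\in\bk$, the set $\langle B,\frac{1}{k}\rangle[\underline{0}]=\{g\in C(X):|g(x)|<\frac{1}{k}\text{ for all }x\in B\}$ is a basic $\tau_{\bk}$-neighbourhood of $\underline{0}$, so convergence supplies an $n_0$ with $f_n\in\langle B,\frac{1}{k}\rangle[\underline{0}]$ for all $n\geq n_0$; equivalently $|f_n(x)|<\frac{1}{k}$ for every $x\in B$, that is $B\subseteq U_n$, for all $n\geq n_0$. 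I would then recall that every bornology contains all singletons (by the cover axiom each $x$ lies in some member of $\bk$, and heredity gives $\{x\}\in\bk$). Applying the containment just established to $B=\{x\}$ shows that each $x\in X$ eventually belongs to $U_n$, which yields both $\bigcup_{n}U_n=X$, so that $\uc$ is indeed a cover, and the tool needed for infiniteness.

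The one point deserving a genuine argument is infiniteness of $\uc$; this is where I expect the only real (if brief) obstacle. Suppose the collection $\{U_n:n\in\omega\}$ were finite. Partitioning $\omega$ according to the value of $U_n$, some value $W$ must occur for arbitrarily large indices $n$. For an arbitrary $x\in X$, the containment applied to $\{x\}\in\bk$ gives an $n_0$ with $x\in U_n$ for all $n\geq n_0$; choosing $n\geq n_0$ with $U_n=W$ forces $x\in W$. As $x$ was arbitrary this gives $W=X$, contradicting the properness of the members of $\uc$. Hence $\uc$ is infinite, all four requirements hold, and $\uc\in\Gb$. The remainder of the argument is a routine translation of the definition of $\tau_{\bk}$-convergence into the language of bornological covers.
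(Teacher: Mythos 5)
Your proof is correct and takes essentially the same route as the paper: the crucial step — testing $\tau_{\bk}$-convergence against the basic neighbourhood $[B,\frac{1}{k}](\underline{0})$ to obtain the tail containment $B\subseteq f_n^{-1}(-\frac{1}{k},\frac{1}{k})$ for all $n\geq n_0$ — is exactly the paper's argument. The only difference is thoroughness: the paper stops there, leaving openness, the covering property, and infiniteness implicit, whereas you verify them explicitly (your pigeonhole argument that a finite collection would force some proper member to equal $X$, using singletons in $\bk$, is a nice closing of a gap the paper glosses over, but it is not a different method).
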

\begin{proof}
Fix a $k\in \omega$. Let $B\in \bk$ and take the neighbourhood $[B,\frac{1}{k}](\underline{0})$. Choose $n_0$ for which $f_n\in [B,\frac{1}{k}](\underline{0})$ for all $n\geq n_0$. Consequently, $B\subseteq f^{-1}_n(-\frac{1}{k},\frac{1}{k})$ for all $n\geq n_0$. Hence $\{f^{-1}_n(-\frac{1}{k},\frac{1}{k}):n\in \omega\}$ is a $\gb$-cover of $X$.
\end{proof}

\begin{Lemma}
\label{L2}
Let $\bk$ be a bornology on a Tychonoff space $X$ with a compact base. Let $\uc$ be a family of open subsets of $X$. $\uc$ is an open $\bk$-cover of $X$ if and only if for each $U$ there exists a closed subset $C(U)$ of $U$ for which $\{C(U):U\in \uc\}$ is a $\bk$-cover of $X$.
\end{Lemma}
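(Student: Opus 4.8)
The plan is to treat the two directions separately: the reverse implication is essentially immediate, while the forward one—producing, inside the members of $\uc$, closed sets whose totality is again a $\bk$-cover—is where the compactness of the base and the Tychonoff property of $X$ do the work. For $(\Leftarrow)$, assume that for each $U\in\uc$ a closed set $C(U)\subseteq U$ has been chosen so that $\{C(U):U\in\uc\}$ is a $\bk$-cover. Given $B\in\bk$ there is $U\in\uc$ with $B\subseteq C(U)\subseteq U$, so $\uc$ captures every member of $\bk$; since the $C(U)$ omit $X$ and the members of $\uc$ are proper open subsets, $X\notin\uc$, and hence $\uc\in\ob$.

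For $(\Rightarrow)$, fix a compact base $\bk_0$ of $\bk$. The basic mechanism is the standard separation, in a Tychonoff space, of a compact set from a disjoint closed set (a Wallace/Urysohn-type argument of the kind already used in Lemma~\ref{L}): for $B\in\bk_0$ and any open $U\supseteq B$, the set $X\setminus U$ is closed and disjoint from the compact set $B$, so there is a continuous $\phi\colon X\to[0,1]$ with $\phi|_{B}\equiv 0$ and $\phi|_{X\setminus U}\equiv 1$; then $D:=\phi^{-1}\big([0,\tfrac12]\big)$ is closed with $B\subseteq D\subseteq U$. First I would record this as the device that manufactures a closed set squeezed between any bounded base element and any open superset.

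Next I would assemble the family. By Lemma~\ref{lemma1}, for every $B\in\bk_0$ the set $\{U\in\uc:B\subseteq U\}$ is infinite; in particular $\uc$ is infinite, and since $\bk_0$ is upward directed by inclusion it suffices to arrange that a cofinal subfamily of $\bk_0$ is captured by the chosen closed sets. For each relevant $B\in\bk_0$ I would pick $U_B\in\uc$ with $B\subseteq U_B$ together with a closed $D_B$ satisfying $B\subseteq D_B\subseteq U_B$ from the device above, set $C(U_B)=D_B$, and put $C(U)=\emptyset$ on the remaining members; exploiting the infinitude supplied by Lemma~\ref{lemma1}, these selections can be spread out so that distinct base elements are routed to distinct members of $\uc$. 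Every $B\in\bk$ then lies in some $B_0\in\bk_0$, hence in $C(U_{B_0})$, so $\{C(U):U\in\uc\}$ is a $\bk$-cover.

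The main obstacle is exactly this routing. A single proper open $U$ can never contain a cofinal family of base elements, because the union of such a family is all of $X$ (the base covers $X$) while $U\subsetneq X$; consequently one cannot collapse the base elements onto a few members of $\uc$ and simply take unions, an infinite union of the closed sets $D_B$ inside one $U$ being in general not closed. The role of Lemma~\ref{lemma1} is precisely to create enough room: having infinitely many admissible $U$ above each $B$ lets the assignment $B\mapsto U_B$ be made injective on a cofinal part of $\bk_0$, which, by directedness of the base, is all that is needed for the resulting closed family to be a $\bk$-cover.
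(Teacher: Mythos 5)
Your ``$(\Leftarrow)$'' direction is fine. The ``$(\Rightarrow)$'' direction, however, breaks exactly at the step you yourself flag as the main obstacle, and the proposed repair does not work. From Lemma~\ref{lemma1} (each $B\in\bk_0$ lies in infinitely many members of $\uc$) you infer that the assignment $B\mapsto U_B$ can be made injective on a cofinal part of $\bk_0$. That inference is false: infinitely many admissible targets per element guarantees an injective choice only when the family being matched is countable (a greedy recursion); here the matched family must be \emph{cofinal} in $\bk_0$, and no cofinal subfamily need be that small. Concretely, let $X=\omega^\omega$, let $\bk=\ks$ be the bornology of sets with compact closure, and let $\uc=\{U_k:k\in\omega\}$ where $U_k=\{f\in\omega^\omega:f(0)\leq k\}$. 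This is a countable open $\bk$-cover, but any subfamily of $\bk_0$ cofinal among the compact sets yields a dominating family in $(\omega^\omega,\leq)$ and hence has cardinality at least $\mathfrak{d}>\omega$; so there is no injection of any cofinal part of $\bk_0$ into $\uc$, and your routing cannot be carried out at all. Note that the conclusion of the lemma holds trivially in this example (each $U_k$ is clopen, so $C(U_k)=U_k$ works): what your scheme misses is that a single $C(U)$ must in general absorb an uncountable, unbounded collection of base elements, which is precisely what injectivity forbids.

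This is also where your route differs from the paper's. The paper never routes base elements to cover elements: it uses Lemma~\ref{L1}(1) to see that $A=\{f\in C_{\bk}(X):f(X\setminus U)=\{1\}\ \text{for some}\ U\in\uc\}$ clusters at $\underline{0}$, fixes for each $U$ a single $f_U\in A$ with $f_U(X\setminus U)=\{1\}$, puts $C(U)=f_U^{-1}([-\tfrac12,\tfrac12])$, and extracts the covering property from the density of $A$ at $\underline{0}$, so that one $C(U)$ may swallow arbitrarily many $B$'s at once. You should know, moreover, that the obstacle you isolated is essential rather than technical: on a Dowker space $X$ (normal, not countably paracompact; such spaces exist in ZFC) with $\bk=\fc$, an increasing open cover $\{U_n:n\in\omega\}$ witnessing the failure of countable paracompactness is an open $\fc$-cover by proper open sets admitting \emph{no} closed sets $C(U_n)\subseteq U_n$ with $\bigcup_{n\in\omega}C(U_n)=X$; so the forward implication cannot be recovered by any routing argument whatsoever, and shrinking lemmas of this kind genuinely require extra structure (for instance cozero members, as in the $C_p$-theory antecedents). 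Indeed even the paper's own argument is delicate at exactly this point, since the density witness found in $[B,\tfrac12](\underline{0})\cap A$ need not be the designated $f_U$ of its cover element. In short: your reverse direction stands, but the forward direction has a genuine gap, and it is not one that can be closed along the lines you propose.
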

\begin{proof}
Suppose that $\uc$ is an open $\bk$-cover of $X$ and let $A=\{f\in C_{\bk}(X): f(X\setminus U)=\{1\}  \text{ for some } U\in \uc\}$. Then $A\in \Omega_{\underline{0}}$ by Lemma \ref{L1}. For $U\in \uc$ let $C(U)=f^{-1}_U([-\frac{1}{2},\frac{1}{2}])$, where $f_U(X\setminus U)=\{1\}$ for $f_U\in A$. Clearly, $C(U)\subseteq U$. Now for a $B\in \bk$ and the neighbourhood $[B,\frac{1}{2}](\underline{0})$ choose $f_U\in [B,\frac{1}{2}](\underline{0})\cap A$ for some $U\in \uc$. Consequently, $B\subseteq C(U)$. This proves that $\{C(U):U\in \uc\}$ is a $\bk$-cover of $X$.  The converse is straightforward.
\end{proof}

\begin{Prop}
\label{Pfun1}
Let $\bk$ be a bornology with closed base on a metric space $X$. Let $\Pi\in \{\S1,\Sf\}$. The following assertions hold.\\
\noindent$(1)$ If $X$ satisfies $\Pi(\Gb, \ob)$, then  $C_{\bk}(X)$ satisfies $\Pi(\Sigma_{\underline{0}},\Omega_{\underline{0}})$.\\
\noindent$(2)$ If $X$ satisfies $\Pi(\Gb, \Gb)$, then  $C_{\bk}(X)$ satisfies $\Pi(\Sigma_{\underline{0}},\Sigma_{\underline{0}})$.
\end{Prop}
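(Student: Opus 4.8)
The plan is to push the problem down to $X$ through Lemma~\ref{L3}, apply the covering principle $\Pi(\Gb,\cdot)$ there, and lift the resulting cover back to $C_{\bk}(X)$. I will carry out (1) for $\Pi=\S1$ in full; the remaining three cases use the same mechanism and differ only in the target class that is produced. First I would fix a sequence $\{A_n:n\in\omega\}$ in $\Sigma_{\underline{0}}$ and write $A_n=\{f^n_m:m\in\omega\}$, a sequence converging to $\underline{0}$. For each $n$, keeping only those $m$ with $(f^n_m)^{-1}(-\frac1n,\frac1n)\neq X$, Lemma~\ref{L3} shows that $\uc_n=\{(f^n_m)^{-1}(-\frac1n,\frac1n):m\in\omega\}$ is a $\gb$-cover of $X$, so $\uc_n\in\Gb$. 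The decisive bookkeeping choice is to tie the tolerance $\frac1n$ to the stage $n$.

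Next I would apply $\S1(\Gb,\ob)$ to $\{\uc_n:n\in\omega\}$, obtaining for each $n$ a function $g_n=f^n_{m_n}\in A_n$ whose preimage $V_n:=g_n^{-1}(-\frac1n,\frac1n)$ was selected, with $\{V_n:n\in\omega\}\in\ob$. Since each $V_n\neq X$, no $g_n$ equals $\underline{0}$, so $\underline{0}\notin\{g_n:n\in\omega\}$. To see $\underline{0}\in\overline{\{g_n:n\in\omega\}}$, fix $B\in\bk$ and $k\in\omega$. An open $\bk$-cover is necessarily infinite (otherwise a finite set meeting the complement of each of its members would be uncovered, and finite sets lie in $\bk$), so Lemma~\ref{lemma1} furnishes infinitely many $n$ with $B\subseteq V_n$; picking such an $n\geq k$ gives $B\subseteq g_n^{-1}(-\frac1n,\frac1n)\subseteq g_n^{-1}(-\frac1k,\frac1k)$, i.e.\ $g_n\in\langle B,\frac1k\rangle[\underline{0}]$. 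Thus every basic neighbourhood of $\underline{0}$ meets $\{g_n:n\in\omega\}$, so $\{g_n:n\in\omega\}\in\Omega_{\underline{0}}$ and $C_{\bk}(X)$ satisfies $\S1(\Sigma_{\underline{0}},\Omega_{\underline{0}})$.

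For $\Pi=\Sf$ in (1) I would apply $\Sf(\Gb,\ob)$ to the same $\uc_n$, obtaining finite subfamilies $\vc_n\subseteq\uc_n$ with $\bigcup_n\vc_n\in\ob$; letting $\dc_n\subseteq A_n$ collect the functions whose preimages constitute $\vc_n$, I would repeat the neighbourhood check verbatim on the infinite open $\bk$-cover $\bigcup_n\vc_n$ to conclude $\bigcup_n\dc_n\in\Omega_{\underline{0}}$. For part (2) the target on $X$ is $\Gb$: now the selected family is a $\gb$-cover, so for a fixed $B$ all but finitely many of its members contain $B$; discarding in addition the finitely many stages $n<k$ leaves cofinitely many $n$ with $B\subseteq g_n^{-1}(-\frac1k,\frac1k)$, which is precisely convergence of the chosen family to $\underline{0}$, i.e.\ membership in $\Sigma_{\underline{0}}$.

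The main obstacle is exactly this scale-matching: a single open $\bk$-cover or $\gb$-cover governs only one tolerance, so one must bind $\frac1n$ to the stage $n$ and then exploit that an open $\bk$-cover meets each $B$ in infinitely many members (for the $\Omega_{\underline{0}}$ targets), while a $\gb$-cover meets each $B$ in cofinitely many members (for the $\Sigma_{\underline{0}}$ targets), in order to regain control at every tolerance $\frac1k$ simultaneously. The only other point requiring care is ruling out $\underline{0}$ from the selected family, which is automatic here because $\gb$-covers and open $\bk$-covers exclude $X$ and hence the selected preimages force $g_n\neq\underline{0}$.
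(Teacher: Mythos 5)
Your overall strategy coincides with the paper's: bind the tolerance $\frac{1}{n}$ to the stage $n$, convert each $A_n$ into a $\gb$-cover via Lemma~\ref{L3}, apply the selection hypothesis on $X$, and translate the selected cover back into clustering (for $\Omega_{\underline{0}}$) or convergence (for $\Sigma_{\underline{0}}$) at $\underline{0}$. The back-translation is sound, including your justification that open $\bk$-covers are infinite, the appeal to Lemma~\ref{lemma1} to get infinitely many indices $n$ with $B\subseteq V_n$, and the inclusion $g_n^{-1}(-\frac{1}{n},\frac{1}{n})\subseteq g_n^{-1}(-\frac{1}{k},\frac{1}{k})$ for $n\geq k$.

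The gap is in the first step. After ``keeping only those $m$ with $(f^n_m)^{-1}(-\frac{1}{n},\frac{1}{n})\neq X$'' you assert that Lemma~\ref{L3} makes $\uc_n$ a $\gb$-cover, but this is true only if infinitely many $m$ survive the filtering. Nothing prevents a stage $n$ at which all but finitely many (possibly all) members of $A_n$ have preimage equal to $X$: for instance, $A_n$ could consist of the constant functions with value $\frac{1}{(n+2)(m+2)}$, $m\in\omega$, which converge to $\underline{0}$ while every preimage at tolerance $\frac{1}{n}$ is all of $X$. At such a stage your $\uc_n$ is finite or empty, hence not a member of $\Gb$ (a $\gb$-cover is by definition an infinite cover by proper open subsets), so the hypothesis $\Pi(\Gb,\cdot)$ cannot be applied to the sequence $\{\uc_n\}$ and your $g_n$ is undefined. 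This is precisely the degenerate case the paper isolates: it collects the stages at which some member of $\uc_n$ equals $X$ into a set $M$, on $M$ selects functions whose preimage is $X$ (these are uniformly within $\frac{1}{n}$ of $\underline{0}$, hence converge to $\underline{0}$ along $M$), applies the selection principle only to the stages outside $M$, and interleaves the two selections. Your argument needs the same, easily supplied, patch: at each stage where the filtering leaves only finitely many sets, select one of the discarded functions instead; such a function lies in $\langle B,\varepsilon\rangle[\underline{0}]$ for every $B\in\bk$ as soon as $\frac{1}{n}<\varepsilon$, so these choices are harmless for both the $\Omega_{\underline{0}}$ and the $\Sigma_{\underline{0}}$ targets, and the rest of your proof goes through unchanged.
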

\begin{proof}
We prove only $(2)$ for $\Pi=\S1$ as the proof of $(1)$ and other cases are similar. Suppose that $\{A_n:n\in \omega\}$ is a sequence from $\Sigma_{\underline{0}}$, where $A_n=\{f_{n,k}:k\in \omega\}$ for $n\in \omega$. Let $\uc_n=\{f^{-1}_{n,k}(-\frac{1}{n},\frac{1}{n}):k\in \omega\}$. If $X\in \uc_n$ for infinitely many $n$, then choose an  infinite set $M\subseteq \omega$ with $X=f^{-1}_{n,k_n}(-\frac{1}{n},\frac{1}{n})$ for all $n\in M$. Clearly, $\{f_{n,k_n}:n\in M\}$ converges to $\underline{0}$. Now by Lemma \ref{L3}, $\uc_n$, $n\not\in M$, is a $\gb$-cover of $X$. Applying $\S1(\Gb, \Gb)$ to the sequence $\{\uc_n:n\in \omega, n\not\in M\}$, choose $f_{n,k_n}$ for which $\{f^{-1}_{n,k_n}(-\frac{1}{n},\frac{1}{n}):n\in \omega, n\not\in M\}$ is a $\gb$-cover of $X$.
We show that $\{f_{n,k_n}:n\in \omega, n\not\in M\}$ converges to $\underline{0}$. Let $[B,\varepsilon](\underline{0})$ be a neighbourhood of $\underline{0}$, where $B\in \bk$ and $\varepsilon>0$. Choose $n_0\in \omega$ with $\frac{1}{n_0}<\varepsilon$ for which $B\subseteq f^{-1}_{n,k_n}(-\frac{1}{n},\frac{1}{n}))$ for all $n\geq n_0$, $n\not\in M$. Consequently, $f_{n,k_n}\in [B,\varepsilon](\underline{0})$ for all $n\geq n_0$, $n\not\in M$. Hence the sequence $\{f_{n,k_n}:n\in \omega\}$ converges to $\underline{0}$ witnessing  $\S1(\Sigma_{\underline{0}},\Sigma_{\underline{0}})$.
\end{proof}

We now present a characterization of the Reznichenko property of $C_{\bk}(X)$ in terms of a $\bk$-covering property of $X$. The proof is obtained by using Lemma \ref{L2} and by following the line of arguments of the proof of \cite[Theorem 12]{sakai} which is for the $C_p(X)$ space.
\begin{Th}
\label{Tf1}
Let $\bk$ be a bornology on a Tychonoff space $X$ with a compact base. The following assertions are equivalent.\\
\noindent$(1)$ $C_{\bk}(X)$ has the Reznichenko property.\\
\noindent$(2)$ Every open $\bk$-cover of $X$ is $\bk$-groupable.
\end{Th}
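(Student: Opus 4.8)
The plan is to prove both implications by translating closure properties of $\underline{0}$ in $C_{\bk}(X)$ into $\bk$-covering properties of $X$ through Lemmas~\ref{L1} and~\ref{L2}, closely following the argument of \cite[Theorem 12]{sakai} for $C_p(X)$. Since $C_{\bk}(X)$ is homogeneous, I would check the Reznichenko property only at $\underline{0}$: for every $A\subseteq C_{\bk}(X)$ with $\underline{0}\in\overline{A}\setminus A$ one must produce pairwise disjoint finite sets $A_n\subseteq A$ such that every neighbourhood of $\underline{0}$ meets all but finitely many $A_n$. The guiding dictionary is that a basic neighbourhood $[B,\varepsilon](\underline{0})$, $B\in\bk$, corresponds to the requirement ``$B\subseteq U$ for some member $U$ of a group''.

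For $(1)\Rightarrow(2)$, I would start from an open $\bk$-cover $\uc$ and form $A=\{f\in C_{\bk}(X):f(X\setminus U)=\{1\}\text{ for some }U\in\uc\}$. By Lemma~\ref{L1} we have $A\in\Omega_{\underline{0}}$, i.e.\ $\underline{0}\in\overline{A}\setminus A$, so the Reznichenko property yields pairwise disjoint finite $A_n\subseteq A$ meeting every neighbourhood of $\underline{0}$ eventually. Choosing for each $f\in A_n$ a witnessing $U_f\in\uc$ with $f(X\setminus U_f)=\{1\}$ and setting $\uc_n=\{U_f:f\in A_n\}$ gives finite subfamilies of $\uc$. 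The groupability condition then follows by testing against $[B,\tfrac12](\underline{0})$ for $B\in\bk$: eventually this neighbourhood meets $A_n$, so some $f\in A_n$ satisfies $|f|<\tfrac12$ on $B$; since $f\equiv1$ off $U_f$, this forces $B\subseteq U_f\in\uc_n$. To make the $\uc_n$ a genuine partition of $\uc$, I would pass to a countable open $\bk$-cover, select the $U_f$ injectively, and distribute any unused members of $\uc$ one at a time among the groups, since enlarging finite groups preserves the eventual-covering condition.

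For $(2)\Rightarrow(1)$, I would run the construction in reverse. Given $A$ with $\underline{0}\in\overline{A}\setminus A$, Lemma~\ref{L1} shows that each $\uc_n(A)=\{f^{-1}(-\tfrac1n,\tfrac1n):f\in A\}$ is an open $\bk$-cover, and Lemma~\ref{L2} provides closed shrinkings $C\subseteq U$ preserving the $\bk$-cover property; these closed sets are what allow one to recover genuine functions of $A$ witnessing membership in a given neighbourhood. Assembling the sets $f^{-1}(-\tfrac1n,\tfrac1n)$ over all $f$ and all levels $n$ into a single open $\bk$-cover and applying the $\bk$-groupability hypothesis produces a partition into finite disjoint groups in which, for each $B\in\bk$, almost every group contains a member $\supseteq B$. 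Reading off, for each group, the finitely many functions of $A$ giving rise to its members yields the desired finite sets $A_n\subseteq A$, and the eventual-covering property of the groups translates into ``$[B,\varepsilon](\underline{0})$ meets all but finitely many $A_n$''.

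The main obstacle is this reverse direction, specifically coordinating the countably many accuracy levels $\tfrac1n$ so that a single groupable partition simultaneously serves every $\varepsilon>0$, while still guaranteeing that the resulting families $A_n$ are pairwise disjoint --- a priori one function contributes members at infinitely many levels and so could be claimed by several groups. I expect to resolve this by first reducing to a countable $A$ (so the cover and its groups can be enumerated), then fixing for each chosen cover member a single representative function and a single group, and finally using the shrinking from Lemma~\ref{L2} to check that enough of each level survives to verify the neighbourhood-meeting condition. The forward direction is comparatively routine, the only delicate point being the exhaustion of $\uc$ noted above.
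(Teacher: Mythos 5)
Your outline follows the same translation scheme as the proof the paper points to (the paper itself prints no argument; it defers to \cite[Theorem 12]{sakai} together with Lemma~\ref{L2}), and the parts you spell out --- Lemma~\ref{L1} giving $A\in\Omega_{\underline{0}}$, and the test against $[B,\tfrac12](\underline{0})$ showing that any witness $U_f$ of an $f\in A_n$ with $|f|<\tfrac12$ on $B$ must contain $B$ --- are correct. The gap is that the two coordination problems you flag are the actual mathematical content of the theorem, and the repairs you propose for them fail. In $(1)\Rightarrow(2)$, ``selecting the $U_f$ injectively'' is not in general possible: distinct functions lying in different sets $A_n$ may admit one and the same member of $\uc$ as their \emph{only} witness (if $U\in\uc$ is contained in no other member of $\uc$, every $f$ whose set $\{x:f(x)\neq 1\}$ lies in $U$ but in no other member has witness set exactly $\{U\}$), and nothing in the Reznichenko property prevents the sets $A_n$ from being such that, for some fixed $B\subseteq U$, the only functions of $A_n$ small on $B$ are of this kind for infinitely many $n$; then $U$ is forced into infinitely many groups, a Hall-type failure that no post-processing of the $A_n$'s can undo. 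The proof the paper invokes avoids this by applying the Reznichenko property not to your coarse set $A$ but to a set tailored to a fixed enumeration of a \emph{countable} cover, so that each function determines its witness and membership in a small neighbourhood of $\underline{0}$ forces a witness of large (hence fresh) index; some device of this kind is indispensable and is absent from your plan.

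There is also a statement-level obstruction that your ``pass to a countable open $\bk$-cover'' step cannot circumvent: under the paper's definition, a $\bk$-groupable cover is a countable union of finite families, hence countable, so an uncountable open $\bk$-cover is never $\bk$-groupable, and grouping a countable subcover says nothing about $\uc$ itself. Concretely, for $X=\mathbb{R}$ and $\bk=\fc$ one has $C_{\bk}(X)=C_p(\mathbb{R})$, which has the Reznichenko property (all finite powers of $\mathbb{R}$ are $\sigma$-compact, hence Hurewicz), while the uncountable $\omega$-cover $\{(-r,r):r>0\}$ admits no partition into countably many finite pieces; so the implication you are trying to prove is only available once assertion $(2)$ is restricted to countable covers, which is how Sakai's theorem is formulated. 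Finally, in $(2)\Rightarrow(1)$ your resolution is a hope rather than an argument: once you assemble all sets $f^{-1}(-\tfrac1n,\tfrac1n)$ into a single cover, a member does not remember its level, and the groupability hypothesis may cover each $B$ in almost every group only by members all of whose representations use levels below the required $1/\varepsilon$; fixing ``a single representative function and a single group'' per member neither excludes this nor makes the resulting sets $A_n$ pairwise disjoint (one function can still be the unique available representative of members landing in different groups). This is precisely the point at which the cited proof makes essential, concrete use of the closed shrinkings of Lemma~\ref{L2}, whereas in your plan that lemma is only gestured at.
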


The proof of the following game theoretic observations for the space $C_{\bk}(X)$ is adaptation of the proof of \cite[Theorem 4.1]{kocinac03} which is for the space $C_k(X)$.

\begin{Th}
\label{Tf2}
Let $\bk$ be a bornology on a Tychonoff space $X$ with a compact base. The following assertions hold.\\
\noindent$(1)$ If ONE has no wining strategy in the game $\G1 (\ob,\mathcal{O}^{gp}_{\bk})$ on $X$, then $C_{\bk}(X)$ satisfies $\S1(\Omega_{\underline{0}},\Omega_{\underline{0}}^{gp})$.\\
\noindent$(2)$ If ONE has no wining strategy in the game $\Gf(\ob,\mathcal{O}^{gp}_{\bk})$ on $X$, then $C_{\bk}(X)$ satisfies $\Sf(\Omega_{\underline{0}},\Omega_{\underline{0}}^{gp})$.
\end{Th}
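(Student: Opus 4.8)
The plan is to transfer a play of the game on $X$ into a selection in $C_{\bk}(X)$ through the correspondence of Lemma~\ref{L1}, exploiting that defeating a strategy of ONE yields a $\bk$-groupable cover whose "resolution" is coupled to the round number. I treat (1) in detail; (2) is the finite-selection analogue.

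First I would fix a sequence $\{A_n:n\in\omega\}$ from $\Omega_{\underline{0}}$ and define a strategy $\sigma$ for ONE in $\G1(\ob,\mathcal{O}^{gp}_{\bk})$ as follows. In round $n$, having read TWO's previous selections $U_0,\dots,U_{n-1}$ and recovered corresponding functions $f_0,\dots,f_{n-1}$ (each $f_i\in A_i$ with $U_i=f_i^{-1}(-\frac{1}{i+1},\frac{1}{i+1})$), ONE plays the family $\uc_{n+1}(A_n\setminus\{f_0,\dots,f_{n-1}\})=\{f^{-1}(-\frac{1}{n+1},\frac{1}{n+1}):f\in A_n\setminus\{f_0,\dots,f_{n-1}\}\}$. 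This is legitimate: removing finitely many points (none equal to $\underline{0}$) from a set in $\Omega_{\underline{0}}$ leaves it in $\Omega_{\underline{0}}$ since $C_{\bk}(X)$ is Tychonoff (hence $T_1$), so Lemma~\ref{L1}(2) guarantees each such family is an open $\bk$-cover (up to the minor bookkeeping of discarding degenerate families equal to $X$, as in the proof of Proposition~\ref{Pfun1}). The point of deleting the previously chosen functions is to force the selected $f_n$ to be pairwise distinct, so that $\{f_n:n\in\omega\}$ is infinite; this is exactly where the adaptivity of the game, rather than the bare principle $\S1(\ob,\mathcal{O}^{gp}_{\bk})$, is used.

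Since ONE has no winning strategy, $\sigma$ is defeated: there is a $\sigma$-play in which TWO selects $U_n=f_n^{-1}(-\frac{1}{n+1},\frac{1}{n+1})$ in round $n$ and $\{U_n:n\in\omega\}$ is $\bk$-groupable, say $\{U_n:n\in\omega\}=\bigcup_{j\in\omega}\mathcal{G}_j$ with the $\mathcal{G}_j$ finite, pairwise disjoint, and such that every $B\in\bk$ lies in a member of $\mathcal{G}_j$ for all large $j$. I would set $\mathcal{F}_j=\{f_n:U_n\in\mathcal{G}_j\}$ and claim $\{f_n:n\in\omega\}=\bigcup_{j\in\omega}\mathcal{F}_j$ witnesses $\Omega_{\underline{0}}^{gp}$. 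The main step is the verification: given a basic neighbourhood $[B,\epsilon](\underline{0})$, choose $N$ with $\frac{1}{N+1}<\epsilon$; since the $\mathcal{G}_j$ are finite and pairwise disjoint, all but finitely many of them avoid $U_0,\dots,U_N$, while for all large $j$ some $U_n\in\mathcal{G}_j$ satisfies $B\subseteq U_n$. For such $j$ the witnessing index has $n>N$, whence $\frac{1}{n+1}<\epsilon$, and $B\subseteq f_n^{-1}(-\frac{1}{n+1},\frac{1}{n+1})$ gives $f_n\in[B,\epsilon](\underline{0})\cap\mathcal{F}_j$. Thus $\mathcal{F}_j$ meets every basic neighbourhood of $\underline{0}$ for all but finitely many $j$, which also yields $\underline{0}\in\overline{\{f_n\}}\setminus\{f_n\}$, so $\{f_n:n\in\omega\}\in\Omega_{\underline{0}}^{gp}$.

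The hard part will be precisely this transfer of the grouping from the cover $\{U_n\}$ on $X$ to the function family $\{f_n\}$ in $C_{\bk}(X)$: one must ensure that the coupling of the round index $n$ with the resolution $\frac{1}{n+1}$ turns the "eventually covered by a group member" property on $X$ into the "eventually meets a group member at $\underline{0}$" property, while simultaneously guaranteeing distinctness of the selected functions (handled above by deletion) so that the output genuinely lies in $\Omega_{\underline{0}}$. For (2), I would run the same deletion strategy in $\Gf(\ob,\mathcal{O}^{gp}_{\bk})$, letting TWO respond with finite families $\mathcal{V}_n\subseteq\uc_{n+1}(A_n\setminus\{\text{previously chosen}\})$; collecting $\mathcal{F}_n=\{f\in A_n:f^{-1}(-\frac{1}{n+1},\frac{1}{n+1})\in\mathcal{V}_n\}$ and repeating the grouping argument verbatim shows $\bigcup_{n\in\omega}\mathcal{F}_n\in\Omega_{\underline{0}}^{gp}$, giving $\Sf(\Omega_{\underline{0}},\Omega_{\underline{0}}^{gp})$.
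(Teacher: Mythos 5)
Your route is exactly the one the paper intends: the paper offers no written proof of Theorem~\ref{Tf2}, saying only that it is an adaptation of \cite[Theorem 4.1]{kocinac03}, and your proposal is precisely that adaptation --- encode the sequence $\{A_n:n\in\omega\}$ from $\Omega_{\underline{0}}$ as a strategy for ONE which plays the covers $\{f^{-1}(-\frac{1}{n+1},\frac{1}{n+1}):f\in A_n\setminus\{f_0,\dots,f_{n-1}\}\}$, take a play defeating this strategy, and pull the grouping of the resulting $\bk$-groupable cover back to the chosen functions, the round-indexed resolution $\frac{1}{n+1}$ converting ``$B$ eventually lies in a member of each group'' into ``$[B,\epsilon](\underline{0})$ eventually meets each group''. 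Your deletion device is also correct and genuinely needed: in a $T_1$ space no finite set belongs to $\Omega_{\underline{0}}$, so distinctness of the $f_n$ must be forced, and it is the game (rather than the bare selection principle) that permits this adaptive choice.

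However, one step fails as written. You set $\mathcal{F}_j=\{f_n:U_n\in\mathcal{G}_j\}$ and claim these witness membership in $\Omega_{\underline{0}}^{gp}$, but the assignment $n\mapsto U_n$ need not be injective: distinct functions can have identical preimages (if $g$ is continuous with $\{x:g(x)<1\}=U$, then every $f_n=\frac{1}{n+1}g$ satisfies $f_n^{-1}(-\frac{1}{n+1},\frac{1}{n+1})=U$), and TWO may choose the same open set in infinitely many rounds and still win, since winning is a condition on the \emph{set} $\{U_n:n\in\omega\}$. In that case the unique group $\mathcal{G}_j$ containing the repeated set pulls back to an \emph{infinite} $\mathcal{F}_j$, so $\bigcup_j\mathcal{F}_j$ is not a grouping at all. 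The repair is routine: place in $\mathcal{F}_j$ only one selected function per member of $\mathcal{G}_j$ (say the one of least index; your own index computation shows this representative still lies in $[B,\epsilon](\underline{0})$, because a set avoided by the indices $0,\dots,N$ has all of its witnessing indices above $N$), and then absorb the leftover selected functions into the groups one at a time --- enlarging pairwise disjoint finite groups by finitely many points each preserves finiteness, disjointness, and the property that every neighbourhood of $\underline{0}$ meets all but finitely many groups, while $\underline{0}$ stays outside the union. Note that this same absorption remark is what your ``discard the degenerate families equal to $X$'' aside actually requires: if $X=f^{-1}(-\frac{1}{n+1},\frac{1}{n+1})$ for some $f\in A_n$ for infinitely many $n$, those $f$ converge uniformly to $\underline{0}$ and form a groupable set, but the functions selected in the remaining rounds must still be merged into that grouping; discarding rounds is not by itself an option, because $\S1$ and $\Sf$ demand a choice from \emph{every} $A_n$. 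With these two repairs your argument, including the $\Sf$ version in (2), goes through and coincides with the proof the paper points to.
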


\begin{Th}
\label{Tf3}
Let $\bk$ be  bornology on a Tychonoff space $X$ with a compact base. The following assertions are equivalent.\\
\noindent$(1)$ $X$ satisfies $\S1(\ob,\mathcal{O}^{gp}_{\bk})$.\\
\noindent$(2)$ $C_{\bk}(X)$ has countable strong fan tightness and Reznichenko's property.
\end{Th}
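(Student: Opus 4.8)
The plan is to factor the equivalence through the intermediate statement that $C_{\bk}(X)$ satisfies $\S1(\Omega_{\underline{0}},\Omega_{\underline{0}}^{gp})$, so that the theorem follows from the chain
$(1)\Leftrightarrow C_{\bk}(X)\models\S1(\Omega_{\underline{0}},\Omega_{\underline{0}}^{gp})\Leftrightarrow(2)$. The whole argument rests on the duality supplied by Lemma~\ref{L1}: a family $\uc$ of proper open subsets of $X$ is an open $\bk$-cover exactly when the associated set $A=\{f\in C_{\bk}(X):f(X\setminus U)=\{1\}\text{ for some }U\in\uc\}$ lies in $\Omega_{\underline{0}}$, and conversely every $A\in\Omega_{\underline{0}}$ produces open $\bk$-covers $\uc_k(A)=\{f^{-1}(-\tfrac1k,\tfrac1k):f\in A\}$. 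The bridge between the two sides is elementary: if $f(X\setminus U)=\{1\}$ and $f\in[B,\varepsilon](\underline{0})$ with $\varepsilon<1$, then $f$ cannot take the value $1$ on $B$, whence $B\subseteq U$; dually, $B\subseteq f^{-1}(-\tfrac1k,\tfrac1k)$ holds precisely when $f\in[B,\tfrac1k](\underline{0})$. Translating the neighbourhood filter of $\underline{0}$ into the containment relation $B\subseteq U$ is exactly what allows the two groupability notions to be matched.

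For the equivalence of (2) with $\S1(\Omega_{\underline{0}},\Omega_{\underline{0}}^{gp})$ I would argue directly on $C_{\bk}(X)$. Since $\Omega_{\underline{0}}^{gp}\subseteq\Omega_{\underline{0}}$, the principle $\S1(\Omega_{\underline{0}},\Omega_{\underline{0}}^{gp})$ yields countable strong fan tightness at once, while applying it to a constant sequence $A_n=A$ produces, for any $A$ with $\underline{0}\in\overline A\setminus A$, a selection $\{f_n\}\in\Omega_{\underline{0}}^{gp}$ whose witnessing finite blocks establish the Reznichenko property (this half may also be cross-checked against Theorem~\ref{Tf1}). Conversely, given $\{A_n\}$ from $\Omega_{\underline{0}}$, countable strong fan tightness selects $f_n\in A_n$ with $\{f_n\}\in\Omega_{\underline{0}}$, and then the Reznichenko property partitions $\{f_n\}$ into finite pairwise disjoint pieces $F_j$ which every neighbourhood of $\underline{0}$ meets for all but finitely many $j$; this is precisely $\{f_n\}\in\Omega_{\underline{0}}^{gp}$, giving $\S1(\Omega_{\underline{0}},\Omega_{\underline{0}}^{gp})$.

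It then remains to transfer these local statements of $C_{\bk}(X)$ to the covering side of $X$. For $(1)\Rightarrow\S1(\Omega_{\underline{0}},\Omega_{\underline{0}}^{gp})$, given $\{A_n\}\subseteq\Omega_{\underline{0}}$ I would take $\uc_n=\uc_n(A_n)$ from Lemma~\ref{L1}(2), apply $\S1(\ob,\mathcal{O}^{gp}_{\bk})$ to obtain $U_n=f_n^{-1}(-\tfrac1n,\tfrac1n)\in\uc_n$ with $\{U_n\}\in\mathcal{O}^{gp}_{\bk}$, and verify that the witnessing grouping of $\{U_n\}$ into finite blocks, read through the bridge together with the decay $1/n\to0$ (so that for large indices $B\subseteq U_n$ forces $f_n\in[B,\varepsilon](\underline{0})$), makes $\{f_n\}$ a groupable element of $\Omega_{\underline{0}}$. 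For the reverse implication I would translate a sequence of open $\bk$-covers into $\Omega_{\underline{0}}$-sets $A_n$ via Lemma~\ref{L1}(1), apply $\S1(\Omega_{\underline{0}},\Omega_{\underline{0}}^{gp})$ to obtain $f_n\in A_n$ with $\{f_n\}\in\Omega_{\underline{0}}^{gp}$, recover $U_n\in\uc_n$ with $f_n(X\setminus U_n)=\{1\}$, and use the groupable structure of $\{f_n\}$ (via the bridge, now with $\varepsilon=\tfrac12$) to exhibit $\{U_n\}$ as a $\bk$-groupable open $\bk$-cover, i.e.\ to witness $\S1(\ob,\mathcal{O}^{gp}_{\bk})$.

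The main obstacle is precisely this transfer of the grouping in both directions: one must check that a partition of $\{U_n\}$ into finite blocks eventually covering each $B\in\bk$ corresponds to a partition of $\{f_n\}$ whose blocks are met by every $[B,\varepsilon](\underline{0})$ for all but finitely many indices, and vice versa. Two points deserve care. First, the Reznichenko groups need only exhaust a subfamily of the selected objects, so the leftover indices must be absorbed into the finite blocks without destroying finiteness or the eventual-covering property; since there are only countably many leftovers, distributing finitely many into each block suffices. Second, repeated selections should be handled by working with the indexed family $(U_n)_{n\in\omega}$ rather than the underlying set, which guarantees that the blocks have min-index tending to infinity and removes any ambiguity. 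The compact-base hypothesis enters only through Lemma~\ref{L1}, which is what makes the entire duality available.
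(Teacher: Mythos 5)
Your architecture is genuinely different from the paper's: the paper proves $(1)\Rightarrow(2)$ by chaining its game-theoretic results (Theorem~\ref{TG4} together with Theorem~\ref{Tf2}(1)), and $(2)\Rightarrow(1)$ by combining \cite[Theorem 3.5]{am} with Theorem~\ref{Tf1}, whereas you attempt a from-scratch proof pivoting through $\S1(\Omega_{\underline{0}},\Omega_{\underline{0}}^{gp})$ with both cover--function transfers done by hand via Lemma~\ref{L1}. The purely function-space half of your plan is correct: $\S1(\Omega_{\underline{0}},\Omega_{\underline{0}}^{gp})$ trivially yields countable strong fan tightness, the constant-sequence application yields the Reznichenko property, and your converse (strong fan tightness selects $f_n\in A_n$ with $\{f_n:n\in\omega\}\in\Omega_{\underline{0}}$, Reznichenko groups this set, and the countably many leftover elements are absorbed one per block) is exactly the right argument there.

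The gap lies in the two transfer steps $(1)\Leftrightarrow \S1(\Omega_{\underline{0}},\Omega_{\underline{0}}^{gp})$, and the bookkeeping you propose does not close it. Groupability (membership in $\mathcal{O}^{gp}_{\bk}$ or $\Omega_{\underline{0}}^{gp}$) is a property of the selected \emph{set}, while $\S1$ permits repeated selections; hence the fibres of $n\mapsto U_n$ (or $n\mapsto f_n$) may be infinite, and the index classes induced by a grouping of the set need be neither finite nor have min-index tending to infinity (a legitimate grouping may place $U_5$ in its thousandth block), so the claim that ``working with the indexed family guarantees that the blocks have min-index tending to infinity'' is false as stated. The large-index half of the problem can be repaired --- choose one representative index per set and note that the indices below $N=\lceil 1/\varepsilon\rceil$ lie in at most $N$ blocks, so almost every block is forced to use representatives $\geq N$ --- but this repair exposes the real obstruction: when one and the same function is selected at indices whose associated sets fall into different blocks, the transferred finite blocks $\fc_j$ of functions are no longer pairwise disjoint, and disjointifying them by first occurrence (or greedily, replacing $\fc_j$ by $\fc_j\setminus\bigcup_{i<j}\fc_i$) can kill the ``all but finitely many'' meeting property: one can build block systems in which, for a fixed neighbourhood, every witness in infinitely many late blocks already occurred in an earlier block. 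Your leftover-absorption trick fixes \emph{missing} elements, not \emph{multiply occurring} ones; these are different problems. This is precisely why the paper (following Ko\v{c}inac) routes this implication through the game $\G1(\ob,\mathcal{O}^{gp}_{\bk})$: a strategy for ONE has memory, so at each inning ONE can delete the finitely many previously designated functions and sets (harmless, since a $\bk$-cover minus finitely many members is still a $\bk$-cover by Lemma~\ref{lemma1}, and $A\setminus G\in\Omega_{\underline{0}}$ for finite $G$), which makes all selected objects pairwise distinct and lets the grouping transfer verbatim. Note finally that for $(2)\Rightarrow(1)$ no transfer of groupings is needed at all: countable strong fan tightness gives $\S1(\ob,\ob)$ by \cite[Theorem 3.5]{am}, and Theorem~\ref{Tf1} says the selected open $\bk$-cover is then automatically $\bk$-groupable; invoking Theorem~\ref{Tf1} at this point (rather than as a mere cross-check) would remove the gap from one of your two directions, but the direction $(1)\Rightarrow \S1(\Omega_{\underline{0}},\Omega_{\underline{0}}^{gp})$ still needs the game-theoretic (or an equivalent injectivity-enforcing) argument.
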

\begin{proof}
$(1)\Rightarrow (2)$. By Theorem \ref{TG4}, ONE has no wining strategy in the game $\G1 (\ob,\mathcal{O}^{gp}_{\bk})$ on $X$. Again by Theorem \ref{Tf2}(1) $C_{\bk}(X)$ satisfies $\S1(\Omega_{\underline{0}},\Omega_{\underline{0}}^{gp})$, i.e., $C_{\bk}(X)$ has countable strong fan tightness and Reznichenko's property.

$(2)\Rightarrow (1)$. If $C_{\bk}(X)$ has countable strong fan tightness, then $X$ satisfies $\S1(\ob,\ob)$ (see \cite[Theorem 3.5]{am}). By Theorem \ref{Tf1}, every open $\bk$-cover is $\bk$-groupable. Hence $X$ satisfies $\S1(\ob,\mathcal{O}^{gp}_{\bk})$.
\end{proof}

Similarly, we obtain the following.

\begin{Th}
\label{Tf4}
Let $\bk$ be  bornology on a Tychonoff space $X$ with a compact base. The following assertions are equivalent.\\
\noindent$(1)$ $X$ satisfies $\Sf(\ob,\mathcal{O}^{gp}_{\bk})$.\\
\noindent$(2)$ $C_{\bk}(X)$ has countable fan tightness and Reznichenko's property.
\end{Th}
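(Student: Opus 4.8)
The plan is to mirror the proof of Theorem~\ref{Tf3} step by step, replacing the single-selection principle $\S1$ and the game $\G1$ by their finite-selection counterparts $\Sf$ and $\Gf$, and replacing countable strong fan tightness by countable fan tightness throughout. The conceptual point driving the whole argument is that, exactly as $\S1(\Omega_{\underline{0}},\Omega_{\underline{0}}^{gp})$ unpacks as countable strong fan tightness together with Reznichenko's property, the property $\Sf(\Omega_{\underline{0}},\Omega_{\underline{0}}^{gp})$ is precisely the conjunction of countable fan tightness (that is, $\Sf(\Omega_{\underline{0}},\Omega_{\underline{0}})$) and Reznichenko's property. This splitting of the groupable selection principle is the bornological analog of the corresponding fact in the $C_p(X)$ theory.

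For the implication $(1)\Rightarrow(2)$, I would begin with $X$ satisfying $\Sf(\ob,\mathcal{O}^{gp}_{\bk})$ and invoke Theorem~\ref{TG3} to conclude that ONE has no winning strategy in the game $\Gf(\ob,\mathcal{O}^{gp}_{\bk})$ on $X$. Feeding this into Theorem~\ref{Tf2}(2) then yields that $C_{\bk}(X)$ satisfies $\Sf(\Omega_{\underline{0}},\Omega_{\underline{0}}^{gp})$, which by the decomposition above is exactly the assertion that $C_{\bk}(X)$ has countable fan tightness and Reznichenko's property.

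For the converse $(2)\Rightarrow(1)$, I would use the two halves of the decomposition separately. Countable fan tightness of $C_{\bk}(X)$ gives that $X$ satisfies $\Sf(\ob,\ob)$ (the fan-tightness analog of the characterization in \cite{am} underlying Theorem~\ref{Tf3}), while Reznichenko's property of $C_{\bk}(X)$ gives, via Theorem~\ref{Tf1}, that every open $\bk$-cover of $X$ is $\bk$-groupable. To combine these, given any sequence $\{\uc_n:n\in\omega\}$ of open $\bk$-covers I would apply $\Sf(\ob,\ob)$ to obtain finite sets $\vc_n\subseteq\uc_n$ with $\bigcup_{n\in\omega}\vc_n\in\ob$; since every open $\bk$-cover is $\bk$-groupable, this union already belongs to $\mathcal{O}^{gp}_{\bk}$, witnessing $\Sf(\ob,\mathcal{O}^{gp}_{\bk})$.

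Since the supporting results (Theorems~\ref{TG3}, \ref{Tf2}(2) and \ref{Tf1}, together with the fan-tightness characterization in \cite{am}) are already available, the assembly is routine. I expect the only point requiring genuine care to be the verification that $\Sf(\Omega_{\underline{0}},\Omega_{\underline{0}}^{gp})$ splits as countable fan tightness plus Reznichenko's property; this is the main (and essentially only) obstacle, and it should be treated exactly as in the $\S1$ version that underlies Theorem~\ref{Tf3}.
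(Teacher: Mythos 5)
Your proposal is correct and coincides with the paper's own argument: the paper proves this theorem ``similarly'' to Theorem~\ref{Tf3}, which is exactly the substitution you carry out --- Theorem~\ref{TG3} plus Theorem~\ref{Tf2}(2) for $(1)\Rightarrow(2)$, and the countable fan tightness characterization $\Sf(\ob,\ob)$ from \cite{am} together with Theorem~\ref{Tf1} for $(2)\Rightarrow(1)$. The splitting of $\Sf(\Omega_{\underline{0}},\Omega_{\underline{0}}^{gp})$ into countable fan tightness plus Reznichenko's property, which you rightly flag as the only delicate point, is used by the paper in the same implicit way in Theorem~\ref{Tf3}.
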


\subsection{Results on variations on tightness}
In \cite[Theorem 3.1]{kocinac03} the countable $T$-tightness property of $C_k(X)$ space was characeterized in terms of a $k$-covering property of $X$. Accordingly, a similar characterization for the $C_{\bk}(X)$ space in terms of a $\bk$-covering property of $X$ is obtained. We present the result without proof as it is similar to that of \cite[Theorem 3.1]{kocinac03}.
\begin{Th}
\label{Tf5}
Let $\bk$ a bornology on a Tychonoff space $X$ with a compact base. The following assertions are equivalent.\\
\noindent$(1)$ $C_{\bk}(X)$ has countable $T$-tightness.\\
\noindent$(2)$ For each uncountable regular cardinal $\rho$ and each increasing sequence $\{\uc_{\alpha}:\alpha<\rho\}$ of families of open subsets of $X$ for which $\bigcup_{\alpha<\rho}\uc_{\alpha}$ is an open $\bk$-cover of $X$, there is a $\beta<\rho$ with $\uc_{\beta}$ being an open $\bk$-cover of $X$.
\end{Th}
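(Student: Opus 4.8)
The plan is to exploit the standard duality between open $\bk$-covers of $X$ and the local structure of $C_{\bk}(X)$ at the zero function $\underline{0}$, as encoded in Lemma~\ref{L1}. Since $C_{\bk}(X)$ is homogeneous, countable $T$-tightness of $C_{\bk}(X)$ is equivalent to countable $T$-tightness at $\underline{0}$; thus in both directions I would fix an uncountable regular cardinal $\rho$ and work with increasing $\rho$-chains, translating a chain of open families on the $X$-side into a chain of subsets of $C_{\bk}(X)$ on the function-space side, and conversely.

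First I would prove $(1)\Rightarrow(2)$. Given an increasing sequence $\{\uc_\alpha:\alpha<\rho\}$ of families of open subsets of $X$ with $\bigcup_{\alpha<\rho}\uc_\alpha$ an open $\bk$-cover, set $A_\alpha=\{f\in C_{\bk}(X):f(X\setminus U)=\{1\}\text{ for some }U\in\uc_\alpha\}$. The chain $\{A_\alpha\}$ is increasing and $\bigcup_\alpha A_\alpha$ is exactly the set associated with $\bigcup_\alpha\uc_\alpha$; by Lemma~\ref{L1}(1) this set lies in $\Omega_{\underline{0}}$, so $\underline{0}\in\overline{\bigcup_\alpha A_\alpha}$. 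Countable $T$-tightness then yields $\beta<\rho$ with $\underline{0}\in\overline{A_\beta}$. As every member of $A_\beta$ equals $1$ on the nonempty set $X\setminus U$ we have $\underline{0}\notin A_\beta$, whence $A_\beta\in\Omega_{\underline{0}}$, and $X\notin\uc_\beta$ because $\uc_\beta\subseteq\bigcup_\alpha\uc_\alpha$. A second application of Lemma~\ref{L1}(1) shows $\uc_\beta$ is an open $\bk$-cover, as required.

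For $(2)\Rightarrow(1)$ I would fix an increasing chain $\{A_\alpha:\alpha<\rho\}$ of subsets of $C_{\bk}(X)$ with $\underline{0}\in\overline{\bigcup_{\alpha<\rho}A_\alpha}$ and, for each $n\in\omega$, form the increasing chain of open families $\uc_n^\alpha=\{f^{-1}(-\tfrac{1}{n},\tfrac{1}{n}):f\in A_\alpha\}$. For a fixed $n$, $\bigcup_\alpha\uc_n^\alpha=\uc_n(\bigcup_\alpha A_\alpha)$ is an open $\bk$-cover by Lemma~\ref{L1}(2), so $(2)$ supplies an index $\beta_n<\rho$ for which $\uc_n^{\beta_n}$ is an open $\bk$-cover. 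Because $\rho$ is regular and uncountable, $\beta:=\sup_{n\in\omega}\beta_n<\rho$, and for every $n$ the chain condition gives $\uc_n^{\beta_n}\subseteq\uc_n^\beta$; hence for each $B\in\bk$ and each $n$ there is $f\in A_\beta$ with $B\subseteq f^{-1}(-\tfrac{1}{n},\tfrac{1}{n})$, i.e. $f\in[B,\tfrac{1}{n}](\underline{0})$. Taking $n$ with $\tfrac{1}{n}<\varepsilon$ shows $\underline{0}\in\overline{A_\beta}$, establishing countable $T$-tightness at $\underline{0}$.

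The main obstacle I anticipate is this last direction, specifically the passage from the countably many witnesses $\beta_n$ — one for each modulus $\tfrac{1}{n}$ — to a single ordinal $\beta$ witnessing closure; this is precisely where the regularity of $\rho$ is essential, since it guarantees the countable supremum remains below $\rho$ while the increasing-chain structure transfers each coordinate cover $\uc_n^{\beta_n}$ up to level $\beta$. A minor technical point is that some $f^{-1}(-\tfrac{1}{n},\tfrac{1}{n})$ may equal $X$; but the witnesses provided by $\uc_n^{\beta_n}$ are proper open sets, so the covering condition needed for $\underline{0}\in\overline{A_\beta}$ survives, and the argument is otherwise routine.
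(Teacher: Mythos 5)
Your proof is correct and takes essentially the same route as the paper, which states the result without proof as an adaptation of Ko\v{c}inac's Theorem 3.1 for $C_k(X)$: both directions are the standard duality through Lemma~\ref{L1}, with countable $T$-tightness applied to the increasing chain of closures $\overline{A_\alpha}$ in one direction, and the regularity of $\rho$ absorbing the countably many indices $\beta_n$ in the other. One small correction to your closing remark: if some $f^{-1}(-\frac{1}{n},\frac{1}{n})$ equals $X$, then assertion (2) cannot be invoked at all for that $n$ (its hypothesis requires $X\notin\bigcup_{\alpha<\rho}\uc_n^{\alpha}$), so no witness $\beta_n$ is ``provided''; the correct repair is to take $\beta_n$ to be any level $\alpha$ for which such an $f$ lies in $A_\alpha$, since that single $f$ already meets every neighbourhood $[B,\frac{1}{n}](\underline{0})$, $B\in\bk$, and the rest of the argument (taking $\beta=\sup_n\beta_n<\rho$) goes through unchanged.
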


We know that $C_p(X)$ has countable fan tightness for finite sets if and only if every finite power of $X$ is Menger (see  \cite[Lemma 2.8]{sakai2}). Also, $C_{\bk}(X)$ has countable fan tightness (countable strong fan tightness) if and only if $X$ satisfies $\Sf(\ob,\ob)$ ($\S1(\ob,\ob)$) (see \cite[Theorem 3.5]{am}). We now have the following result.

\begin{Th}
\label{Tf6}
Let $\bk$ be a bornology on a Tychonoff space $X$ with a compact base. The  following assertions are equivalent.\\
\noindent$(1)$ $X$ satisfies $\S1(\ob,\ob)$.\\
\noindent$(2)$ $C_{\bk}(X)$ has countable strong fan tightness.\\
\noindent$(3)$ $C_{\bk}(X)$ has countable strong fan tightness for finite sets.
\end{Th}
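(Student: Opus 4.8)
The plan is to establish the cycle $(1)\Rightarrow(3)\Rightarrow(2)\Rightarrow(1)$. The implication $(2)\Rightarrow(1)$ (indeed the full equivalence $(1)\Leftrightarrow(2)$) is already available: by \cite[Theorem 3.5]{am}, $C_{\bk}(X)$ has countable strong fan tightness precisely when $X$ satisfies $\S1(\ob,\ob)$. Since $C_{\bk}(X)$ is homogeneous I fix the zero function $\underline{0}$ and read countable strong fan tightness as $\S1(\Omega_{\underline{0}},\Omega_{\underline{0}})$ and its finite-set version as a selection from $\pi$-networks of finite subsets of $C_{\bk}(X)$ at $\underline{0}$. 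Thus only $(1)\Rightarrow(3)$ and $(3)\Rightarrow(2)$ remain, and both are purely local statements at $\underline{0}$.

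For $(1)\Rightarrow(3)$ I would first convert $\pi$-networks of finite sets into open $\bk$-covers. Given a sequence $\{\cc_n:n\in\omega\}$ of $\pi$-networks at $\underline{0}$ consisting of finite subsets of $C_{\bk}(X)$, for a finite $C\subseteq C_{\bk}(X)$ and $k\geq 1$ put $U(C,k)=\bigcap_{f\in C}f^{-1}(-\tfrac1k,\tfrac1k)$, an open subset of $X$. The key observation is that, for each $n$ and each $k$, the family $\vc_{n}^{k}=\{U(C,k):C\in\cc_n\}$ is an open $\bk$-cover of $X$: given $B\in\bk$, the neighbourhood $[B,\tfrac1k](\underline{0})$ contains some $C\in\cc_n$, and then $|f(x)|<\tfrac1k$ for all $f\in C$, $x\in B$, so $B\subseteq U(C,k)$. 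The second step is a diagonalisation over the level $k$: partition $\omega=\bigsqcup_{k\geq 1}I_k$ into infinite pieces and, for each fixed $k$, apply $\S1(\ob,\ob)$ to the sequence $\langle\vc_{n}^{k}:n\in I_k\rangle$ to obtain $C_n\in\cc_n$ ($n\in I_k$) with $\{U(C_n,k):n\in I_k\}\in\ob$. Since the $I_k$ partition $\omega$, this defines $C_n$ for every $n$. Finally I would check that $\{C_n:n\in\omega\}$ is a $\pi$-network at $\underline{0}$: for $[B,\varepsilon](\underline{0})$ choose $k$ with $\tfrac1k<\varepsilon$; since $\{U(C_n,k):n\in I_k\}$ is a $\bk$-cover there is $n\in I_k$ with $B\subseteq U(C_n,k)$, whence $C_n\subseteq[B,\tfrac1k](\underline{0})\subseteq[B,\varepsilon](\underline{0})$.

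The implication $(3)\Rightarrow(2)$ is the easy one, obtained by restricting to singletons: given $\{A_n:n\in\omega\}\subseteq\Omega_{\underline{0}}$, each $\cc_n=\{\{f\}:f\in A_n\}$ is a $\pi$-network at $\underline{0}$ (as $\underline{0}\in\overline{A_n}$) consisting of finite sets, so $(3)$ yields $f_n\in A_n$ with $\{\{f_n\}:n\in\omega\}$ a $\pi$-network, i.e. $\underline{0}\in\overline{\{f_n:n\in\omega\}}$; since $f_n\neq\underline{0}$ this gives $\{f_n:n\in\omega\}\in\Omega_{\underline{0}}$. The main obstacle is the diagonalisation in $(1)\Rightarrow(3)$: a single application of $\S1(\ob,\ob)$ to one sequence mixing all levels would produce only one $\bk$-cover and would not control the closeness parameter, so the partition into the pieces $I_k$ together with a separate use of $\S1(\ob,\ob)$ at each level $k$ is essential. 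A minor point to dispose of is the degenerate case $U(C,k)=X$ (so $\vc_{n}^{k}\notin\ob$): then $C$ already lies in $[B,\tfrac1k](\underline{0})$ for every $B\in\bk$, so one simply sets $C_n=C$ for that index and chooses the remaining $C_m$, $m\in I_k$, arbitrarily, and the level-$k$ covering requirement is met by this single index.
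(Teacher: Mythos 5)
Your proposal is correct, and its skeleton coincides with the paper's: both reduce the theorem to $(1)\Rightarrow(3)$ (with $(1)\Leftrightarrow(2)$ quoted from \cite[Theorem 3.5]{am} and $(3)\Rightarrow(2)$ obtained by passing to singletons), and both translate the finite members $C$ of the $\pi$-networks into the open sets $\bigcap_{f\in C}f^{-1}(-\frac{1}{k},\frac{1}{k})$, with the same treatment of the degenerate case where such a set equals $X$. The one genuine difference is the diagonalisation. You partition $\omega=\bigsqcup_{k\geq 1}I_k$ into infinite blocks and apply $\S1(\ob,\ob)$ once per level $k$, at the fixed scale $\frac{1}{k}$ on the block $I_k$. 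The paper instead applies $\S1(\ob,\ob)$ a single time, to the sequence $\uc_n=\{\bigcap_{f\in A}f^{-1}(-\frac{1}{n},\frac{1}{n}):A\in\ac_n\}$ in which the scale $\frac{1}{n}$ is tied to the index $n$; the closeness parameter is then still under control because the selected sets $U_n\in\uc_n$ form a countable open $\bk$-cover (necessarily infinite as a family, since no open $\bk$-cover can be finite), so by Lemma~\ref{lemma1} every $B\in\bk$ is contained in $U_n$ for infinitely many, hence arbitrarily large, $n$, and one may pick $n$ with both $\frac{1}{n}<\varepsilon$ and $B\subseteq U_n$. Consequently your closing claim that a single application of $\S1(\ob,\ob)$ ``would not control the closeness parameter,'' so that the partition into blocks is essential, is an overstatement: the blocks are sufficient but not necessary. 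What your scheme buys is self-containedness --- each selection happens at one fixed scale and no appeal to Lemma~\ref{lemma1} is needed; what the paper's scheme buys is economy --- one application of the selection principle and no partition. Both arguments are valid.
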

\begin{proof}
It is enough to prove $(1)\Rightarrow (3)$. Suppose that $\{\ac_n:n\in \omega\}$ is a sequence of $\pi$-networks at $\underline{0}$ consisting of finite subsets of $C_{\bk}(X)$. Choose $U(A)=\cap_{f\in A}f^{-1}(-\frac{1}{n},\frac{1}{n})$ for each $A\in \ac_n$. Take $\uc_n=\{U(A):A\in \ac_n\}$. If $X\in \uc_{n}$ for infinitely many $n$, then choose an infinite subset $M$ of $\omega$ with $X=U(A_{n})$ for all $n\in M$. Clearly, $\{A_{n}:n\in M\}$ is a $\pi$-network at $\underline{0}$ and we are done. We therefore assume that $X\not\in \uc_n$ for each $n\in \omega$. It is easy to see that $\uc_n$ is an open $\bk$-cover of $X$. Using $(1)$, choose $U(A_n)\in \uc_n$ for each $n$ for which $\{U(A_n):n\in \omega\}$ is an open $\bk$-cover of $X$. We prove that $\{A_n:n\in \omega\}$ is a $\pi$-network at $\underline{0}$. Take a neighbourhood $[B,\varepsilon](\underline{0})$ of $\underline{0}$, where $B\in \bk$ and $\varepsilon>0$.  Choose a $n\in \omega$ with $\frac{1}{n}<\varepsilon$ and $U(A_n)$ satisfying $B\subseteq U(A_n)$. Consequently, $B\subseteq f^{-1}(-\varepsilon,\varepsilon)$ for all $f\in A_n$ and so $A_n\subseteq [B,\varepsilon](\underline{0})$. Hence $\{A_n:n\in \omega\}$ is a $\pi$-network at $\underline{0}$.
\end{proof}

Similarly, we obtain the following.
\begin{Th}
\label{Tf6}
Let $\bk$ be a bornology on a Tychonoff space $X$ with a compact base. The  following assertions are equivalent.\\
\noindent$(1)$ $X$ satisfies $\Sf(\ob,\ob)$.\\
\noindent$(2)$ $C_{\bk}(X)$ has countable fan tightness.\\
\noindent$(3)$ $C_{\bk}(X)$ has countable fan tightness for finite sets.
\end{Th}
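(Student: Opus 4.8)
The plan is to mirror the proof of the preceding theorem (the strong fan tightness case), replacing the single selection afforded by $\S1(\ob,\ob)$ with the finite selection afforded by $\Sf(\ob,\ob)$. Since $C_{\bk}(X)$ has countable fan tightness if and only if $X$ satisfies $\Sf(\ob,\ob)$ by \cite[Theorem 3.5]{am}, the equivalence $(1)\Leftrightarrow(2)$ is already available; and $(3)\Rightarrow(2)$ is immediate, because given $A_n$ with $\underline{0}\in\overline{A_n}\setminus A_n$ the singleton families $\{\{f\}:f\in A_n\}$ are $\pi$-networks at $\underline{0}$ consisting of finite sets, so applying $(3)$ to them and reading off the chosen singletons yields finite $B_n\subseteq A_n$ with $\cup_{n\in\omega}B_n\in\Omega_{\underline{0}}$, which is exactly $\Sf(\Omega_{\underline{0}},\Omega_{\underline{0}})$. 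Hence it suffices to prove $(1)\Rightarrow(3)$.

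For $(1)\Rightarrow(3)$, I would begin with a sequence $\{\ac_n:n\in\omega\}$ of $\pi$-networks at $\underline{0}$ consisting of finite subsets of $C_{\bk}(X)$, and to each $A\in\ac_n$ associate the open set $U(A)=\cap_{f\in A}f^{-1}(-\tfrac{1}{n},\tfrac{1}{n})$, putting $\uc_n=\{U(A):A\in\ac_n\}$. If $X\in\uc_n$ for infinitely many $n$, I would isolate an infinite $M\subseteq\omega$ and sets $A_n\in\ac_n$ ($n\in M$) with $U(A_n)=X$; then every $f\in A_n$ satisfies $|f|<\tfrac{1}{n}$ on all of $X$, so $\{A_n:n\in M\}$ is already a $\pi$-network at $\underline{0}$ and taking $\dc_n=\{A_n\}$ for $n\in M$ (empty otherwise) settles this case. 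Otherwise I would verify, exactly as in the previous theorem, that each $\uc_n$ is an open $\bk$-cover: for $B\in\bk$ the $\pi$-network property applied to the neighbourhood $[B,\tfrac{1}{n}](\underline{0})$ gives $A\in\ac_n$ with $A\subseteq[B,\tfrac{1}{n}](\underline{0})$, whence $B\subseteq U(A)$.

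Next I would apply $\Sf(\ob,\ob)$ to $\{\uc_n:n\in\omega\}$, obtaining finite $\vc_n\subseteq\uc_n$ with $\cup_{n\in\omega}\vc_n$ an open $\bk$-cover, and pull these back to finite subfamilies $\dc_n\subseteq\ac_n$ with $\vc_n=\{U(A):A\in\dc_n\}$. The remaining task, verifying that $\cup_{n\in\omega}\dc_n$ is a $\pi$-network at $\underline{0}$, is where the only genuine subtlety lies and is the step I expect to be the main obstacle: the radius $\tfrac{1}{n}$ built into $U(A)$ is tied to the index $n$, whereas an arbitrary basic neighbourhood $[B,\varepsilon](\underline{0})$ carries an unrelated $\varepsilon$. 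I would overcome this with Lemma~\ref{lemma1}: since $\cup_{n\in\omega}\vc_n$ is an infinite open $\bk$-cover, the collection of its members containing $B$ is infinite, and because each $\vc_n$ is finite these members must occur for arbitrarily large $n$. Choosing such an $n$ with $\tfrac{1}{n}<\varepsilon$ together with the corresponding $A\in\dc_n$ satisfying $B\subseteq U(A)$ forces $|f|<\tfrac{1}{n}<\varepsilon$ on $B$ for every $f\in A$, that is $A\subseteq[B,\varepsilon](\underline{0})$. This exhibits a member of $\cup_{n\in\omega}\dc_n$ inside the given neighbourhood, so $\cup_{n\in\omega}\dc_n$ is a $\pi$-network at $\underline{0}$ and $(3)$ follows.
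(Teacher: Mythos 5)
Your proof is correct and is essentially the paper's own argument: the paper proves this theorem by declaring it follows ``similarly'' from the preceding strong fan tightness theorem, and your $(1)\Rightarrow(3)$ is exactly that adaptation --- the same $U(A)=\cap_{f\in A}f^{-1}(-\frac{1}{n},\frac{1}{n})$ construction, the same case split on whether $X\in\uc_n$ infinitely often, the same application of $\Sf(\ob,\ob)$, and the appeal to Lemma~\ref{lemma1} to obtain arbitrarily large $n$ with $B\subseteq U(A)$, which the paper's template proof uses implicitly. Your explicit singleton argument for $(3)\Rightarrow(2)$ and the citation of \cite[Theorem 3.5]{am} for $(1)\Leftrightarrow(2)$ likewise just spell out steps the paper leaves implicit, so nothing is missing.
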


The space $C_p(X)$ is Fr\'{e}chet-Urysohn for finite sets if and only if $C_p(X)$ is Fr\'{e}chet-Urysohn (see \cite[Page 3]{gs}). In the context of $C_{\bk}(X)$ space we have the observation that $C_{\bk}(X)$ is Fr\'{e}chet-Urysohn is equivalent to the fact that $X$ satisfies $\S1(\ob,\Gb)$ (also $X$ is a $\gb$-space) (see \cite[Proposition 5.4]{am}). The following can now be established.
\begin{Th}
\label{Tf7}
Let $\bk$ be a bornology on a Tychonoff space $X$ with a compact base. The following assertions are equivalent.\\
\noindent$(1)$ $X$ satisfies $\S1(\ob,\Gb)$. \\
\noindent$(2)$ $X$ is a $\gb$-space. \\
\noindent$(3)$ $C_{\bk}(X)$ is strictly Fr\'{e}chet-Urysohn.\\
\noindent$(4)$ $C_{\bk}(X)$ is Fr\'{e}chet-Urysohn.\\
\noindent$(5)$ $C_{\bk}(X)$ is Fr\'{e}chet-Urysohn for finite sets.
\end{Th}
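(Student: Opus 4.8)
The equivalences $(1)\Leftrightarrow(2)\Leftrightarrow(4)$ are already available from \cite[Proposition 5.4]{am}, as recorded just before the statement, so the plan is to attach $(3)$ and $(5)$ to this cycle through the implications $(1)\Rightarrow(3)\Rightarrow(4)$ and $(1)\Rightarrow(5)\Rightarrow(4)$. Two of these are immediate: $(5)\Rightarrow(4)$ is the general fact, observed in Section~2, that ``Fr\'echet--Urysohn for finite sets'' implies ``Fr\'echet--Urysohn''; and for $(3)\Rightarrow(4)$, given $C\subseteq C_\bk(X)$ with $\underline{0}\in\overline C$, either $\underline{0}\in C$ (trivial) or $C\in\Omega_{\underline{0}}$, in which case applying $\S1(\Omega_{\underline{0}},\Sigma_{\underline{0}})$ to the constant sequence $C,C,\dotsc$ yields a sequence in $C$ converging to $\underline{0}$. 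Hence the substance lies in the two implications emanating from $(1)$.

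For $(1)\Rightarrow(3)$ I would follow the template of Proposition~\ref{Pfun1}. Let $\{A_n:n\in\omega\}$ be a sequence from $\Omega_{\underline{0}}$ and put $\uc_n=\{f^{-1}(-\tfrac1n,\tfrac1n):f\in A_n\}$, which by Lemma~\ref{L1}(2) is an open $\bk$-cover of $X$ provided $X\notin\uc_n$. If $X\in\uc_n$ for infinitely many $n$, witnessed by some $f_n\in A_n$ with $|f_n|<\tfrac1n$ on all of $X$, then $\{f_n\}$ along that index set already converges to $\underline{0}$; otherwise $\uc_n\in\ob$ for cofinitely many $n$, and $\S1(\ob,\Gb)$ produces $f_n\in A_n$ with $\{f_n^{-1}(-\tfrac1n,\tfrac1n):n\in\omega\}\in\Gb$. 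In the latter case, for a basic neighbourhood $[B,\varepsilon](\underline{0})$ the $\gb$-cover property furnishes $n_0$ with $B\subseteq f_n^{-1}(-\tfrac1n,\tfrac1n)$ for all $n\ge n_0$, so that $f_n\in[B,\varepsilon](\underline{0})$ once also $\tfrac1n<\varepsilon$; thus $\{f_n\}$ converges to $\underline{0}$ and witnesses $\S1(\Omega_{\underline{0}},\Sigma_{\underline{0}})$.

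For $(1)\Rightarrow(5)$ I would use the sequential reformulation of ``Fr\'echet--Urysohn for finite sets'' from Section~2 and adapt the argument of Theorem~\ref{Tf6}. Given $\pi$-networks $\{\cc_n:n\in\omega\}$ at $\underline{0}$ consisting of finite subsets of $C_\bk(X)$, set $U_n(A)=\bigcap_{f\in A}f^{-1}(-\tfrac1n,\tfrac1n)$ for $A\in\cc_n$ and $\uc_n=\{U_n(A):A\in\cc_n\}$; because $\cc_n$ is a $\pi$-network and each $[B,\tfrac1n](\underline{0})$ is a neighbourhood of $\underline{0}$, one verifies $\uc_n\in\ob$ (again treating the case $X\in\uc_n$ separately, where convergence is immediate). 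Feeding $\{\uc_n\}$ into $\S1(\ob,\Gb)$ gives $A_n\in\cc_n$ with $\{U_n(A_n):n\in\omega\}\in\Gb$, and for $[B,\varepsilon](\underline{0})$ the $\gb$-cover property yields $n_0$ with $B\subseteq U_n(A_n)\subseteq f^{-1}(-\varepsilon,\varepsilon)$ for every $f\in A_n$ and all large $n$, so $A_n\subseteq[B,\varepsilon](\underline{0})$ eventually, i.e.\ $\{A_n\}$ converges to $\underline{0}$. The only genuinely delicate points I anticipate are the uniform handling of the degenerate case in which $X$ lies in the constructed covers, and---crucially---the step that upgrades the output of the selection from a mere open $\bk$-cover to a $\gb$-cover, which is exactly what lets one pass from a $\pi$-network (as in Theorem~\ref{Tf6}) to an actual convergent sequence; both are routine variations on Proposition~\ref{Pfun1} and Theorem~\ref{Tf6}.
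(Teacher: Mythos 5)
Your proposal is correct and takes essentially the route the paper intends: the paper states Theorem~\ref{Tf7} without a written proof, relying on \cite[Proposition 5.4]{am} for $(1)\Leftrightarrow(2)\Leftrightarrow(4)$ (and $(3)$) and on the machinery just developed for attaching $(5)$. Your implications $(1)\Rightarrow(3)$ and $(1)\Rightarrow(5)$ --- translating $\pi$-networks and sets in $\Omega_{\underline{0}}$ into open $\bk$-covers via Lemma~\ref{L1}, applying $\S1(\ob,\Gb)$, and reading the resulting $\gb$-cover back as convergence, with the degenerate case $X\in\uc_n$ split off --- are exactly the arguments of Proposition~\ref{Pfun1} and Theorem~\ref{Tf6} adapted as the paper envisages.
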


Next, we discuss the supertightness of $C_{\bk}(X)$. We have the following result on the supertightness of $C_p(X)$ from \cite{sakai-st}.

\begin{Th}(\cite[Theorem 2.3]{sakai-st})
\label{Tf10}
For a Tychonoff space $X$ $st(C_p(X))=t(C_p(X))$ for each $n\in \omega$.
\end{Th}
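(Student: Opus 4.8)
The plan is to establish the identity $st(C_p(X)) = t(C_p(X))$ by proving the two inequalities separately; the inequality $t(C_p(X)) \le st(C_p(X))$ is purely formal, while the reverse rests on a single collapsing device. First I would note that in any space a set $C$ with $f \in \overline{C}$ gives the $\pi$-network $\{\{h\} : h \in C\}$ of singletons at $f$, so that a $\pi$-subnetwork of size $\le st(C_p(X))$ yields $D \subseteq C$ of the same cardinality with $f \in \overline{D}$; this gives $t(C_p(X)) \le st(C_p(X))$. Since $C_p(X)$ is homogeneous it then suffices to work at $\underline{0}$, whose basic neighbourhoods are the sets $\langle F, \epsilon\rangle[\underline{0}]$ with $F$ a finite subset of $X$ and $\epsilon > 0$.

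For the reverse inequality I would set $\tau = t(C_p(X))$, take an arbitrary $\pi$-network $\cc$ at $\underline{0}$ consisting of finite subsets of $C_p(X)$, and collapse each member to a single function: for $A = \{f_1, \dots, f_k\} \in \cc$ put $g_A = \max\{|f_1|, \dots, |f_k|\}$, which is a nonnegative element of $C(X)$. The central step is to verify, for every finite $F \subseteq X$ and every $\epsilon > 0$, the equivalence
\[A \subseteq \langle F, \epsilon\rangle[\underline{0}] \iff g_A \in \langle F, \epsilon\rangle[\underline{0}],\]
which holds because $\max_i |f_i(x)| < \epsilon$ on $F$ precisely when each $|f_i(x)| < \epsilon$ on $F$. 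Passing to basic neighbourhoods, this immediately yields that $\cc$ is a $\pi$-network at $\underline{0}$ if and only if $\underline{0} \in \overline{G}$, where $G = \{g_A : A \in \cc\}$ is regarded as a subset of $C_p(X)$.

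I would then invoke tightness: from $\underline{0} \in \overline{G}$ I obtain $G' \subseteq G$ with $|G'| \le \tau$ and $\underline{0} \in \overline{G'}$. Selecting for each $g \in G'$ one $A \in \cc$ with $g_A = g$ produces a subfamily $\cc' \subseteq \cc$ with $|\cc'| \le |G'| \le \tau$ and $\{g_A : A \in \cc'\} = G'$, so that the equivalence above turns $\underline{0} \in \overline{G'}$ into the statement that $\cc'$ is a $\pi$-network at $\underline{0}$. This shows $st(\underline{0}, C_p(X)) \le \tau$; by homogeneity the same bound holds at every point, and since $\tau \ge \omega$ I conclude $st(C_p(X)) = \omega \cdot \sup_{x} st(x, C_p(X)) \le \omega \cdot \tau = \tau$, which together with the first inequality gives the desired equality.

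The argument has no serious obstacle once the assignment $A \mapsto g_A$ is isolated; the one point demanding care is the displayed equivalence, which carries the entire weight of the proof, together with the bookkeeping that pulls a small subfamily of $\cc$ back from a small subset of $G$. The homogeneity reduction and the trivial direction are routine and can be dispatched quickly.
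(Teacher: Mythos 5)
Your proof is correct, but it takes a genuinely different route from the paper's. The paper does not prove this statement directly (it is quoted from Sakai); what it proves is the bornological analogue, Theorem~\ref{Tf9}, $st(C_{\bk}(X))=t(C_{\bk}(X))$, of which the present statement is the special case $\bk=\fc$, since $C_{\fc}(X)=C_p(X)$. That proof fixes a $\pi$-network $\ac$ at $\underline{0}$, and for each $n\in\omega$ forms $U_n(A)=\bigcap_{f\in A}f^{-1}(-\frac{1}{n},\frac{1}{n})$ and the auxiliary set $F_n=\{f\in C_{\bk}(X):f(X\setminus U_n(A))=\{1\} \text{ for some } A\in\ac\}$; it shows $\underline{0}\in\overline{F_n}$ (this is where the Tychonoff hypothesis is used, to produce Urysohn-type functions), applies tightness to each of the countably many sets $F_n$ to get small $G_n$, pulls each $G_n$ back to a subfamily $\ac_n\subseteq\ac$ via the assignment $g\mapsto A_g$, and finally verifies that $\bigcup_{n\in\omega}\ac_n$ is a $\pi$-network. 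Your argument replaces all of this by the single collapsing map $A\mapsto g_A=\max_{f\in A}|f|$, whose defining equivalence $A\subseteq\langle F,\epsilon\rangle[\underline{0}]\iff g_A\in\langle F,\epsilon\rangle[\underline{0}]$ turns the $\pi$-network property into an ordinary closure statement, so tightness is invoked exactly once and no separating functions are needed. This buys several things: the proof is shorter, it makes no use of complete regularity (so the Tychonoff hypothesis is not actually needed for this inequality), and it transfers verbatim to $C_{\bk}(X)$ --- the same equivalence holds with $B\in\bk$ in place of finite $F$ --- so it would yield Theorem~\ref{Tf9} as well, without the Urysohn machinery. What the paper's route buys is continuity with its surrounding toolkit: it is the direct translation of Sakai's original argument and runs on the same cover--closure correspondences (in the style of Lemma~\ref{L1}) used throughout Section~5.
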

Translating this result in $C_{\bk}(X)$ we have a similar observation.

\begin{Th}
\label{Tf9}
Let $\bk$ be a bornology on a Tychonoff space $X$ with a compact base. Then $st(C_{\bk}(X))=t(C_{\bk}(X))$.
\end{Th}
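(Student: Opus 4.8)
The plan is to use the homogeneity of $C_{\bk}(X)$ to reduce everything to the point $\underline{0}$, and then to transfer the study of $\pi$-networks consisting of \emph{finite} subsets into an ordinary closure problem by encoding each finite set as a single function via a pointwise maximum. Since the inequality $t \le st$ holds in complete generality, the substantive content is the reverse inequality $st(C_{\bk}(X)) \le t(C_{\bk}(X))$, which is what I would establish following the line of Theorem~\ref{Tf10}.

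First I would dispose of the easy direction. If $C \subseteq C_{\bk}(X)$ with $\underline{0} \in \overline{C}$, then the family $\{\{f\} : f \in C\}$ of singletons is a $\pi$-network at $\underline{0}$ consisting of finite sets; applying the definition of $st(\underline{0}, C_{\bk}(X))$ to it yields a subset $D \subseteq C$ with $|D| \le st(\underline{0}, C_{\bk}(X))$ such that $\{\{d\} : d \in D\}$ is still a $\pi$-network, i.e.\ $\underline{0} \in \overline{D}$. Hence $t(\underline{0}, C_{\bk}(X)) \le st(\underline{0}, C_{\bk}(X))$, and by homogeneity $t(C_{\bk}(X)) \le st(C_{\bk}(X))$.

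For the reverse inequality, set $\kappa = t(C_{\bk}(X))$ and let $\ac$ be an arbitrary $\pi$-network at $\underline{0}$ consisting of nonempty finite subsets of $C_{\bk}(X)$. To each $A \in \ac$ I associate the function $g_A \in C(X)$ given by $g_A(x) = \max\{|f(x)| : f \in A\}$, which is continuous precisely because $A$ is finite, so $G = \{g_A : A \in \ac\} \subseteq C_{\bk}(X)$. The key observation is that for every basic neighborhood $[B,\varepsilon](\underline{0})$ one has
\[ A \subseteq [B,\varepsilon](\underline{0}) \iff g_A \in [B,\varepsilon](\underline{0}), \]
since both sides assert exactly that $|f(x)| < \varepsilon$ for all $f \in A$ and all $x \in B$. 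Because the sets $[B,\varepsilon](\underline{0})$ form a neighborhood base at $\underline{0}$, this equivalence shows that $\ac$ being a $\pi$-network at $\underline{0}$ is the same statement as $\underline{0} \in \overline{G}$. Now applying tightness to $G$ gives $G' \subseteq G$ with $|G'| \le \kappa$ and $\underline{0} \in \overline{G'}$; selecting for each member of $G'$ one $A \in \ac$ realizing it produces $\dc \subseteq \ac$ with $|\dc| \le \kappa$ and $G' \subseteq \{g_A : A \in \dc\}$, whence $\underline{0} \in \overline{\{g_A : A \in \dc\}}$, and the displayed equivalence read backwards shows that $\dc$ is a $\pi$-network at $\underline{0}$. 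Thus $st(\underline{0}, C_{\bk}(X)) \le \kappa$; by homogeneity the supertightness at every point is $\le \kappa$, so $st(C_{\bk}(X)) = \omega \cdot st(\underline{0}, C_{\bk}(X)) \le \kappa = t(C_{\bk}(X))$ (using $\kappa \ge \omega$), and combining with the previous paragraph yields equality.

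I expect the main obstacle to be finding the correct encoding of a finite set by a single function: the point is that $A \subseteq [B,\varepsilon](\underline{0})$ hides a universal quantifier over $f \in A$, so naive choices (picking one representative from each $A$, or summing rather than maximizing) would break one direction of the equivalence. The pointwise maximum of the absolute values is exactly the device that preserves the containment relation in both directions while keeping $g_A$ inside $C_{\bk}(X)$. Beyond this, the only structural inputs are the continuity of $g_A$ (finiteness of $A$) and the fact that the $[B,\varepsilon](\underline{0})$ constitute a neighborhood base; the compactness of the base of $\bk$ is not essential to this particular argument and serves mainly to place the result alongside Lemmas~\ref{L1}--\ref{L2}.
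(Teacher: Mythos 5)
Your proof is correct, but it takes a genuinely different route from the paper's. The paper follows Sakai's argument for $C_p(X)$: for each $n$ it associates to every $A\in\ac$ the open set $U_n(A)=\bigcap_{f\in A}f^{-1}(-\frac{1}{n},\frac{1}{n})$, forms the set $F_n$ of all continuous functions equal to $1$ off some $U_n(A)$, proves $\underline{0}\in\overline{F_n}$, applies tightness to each $F_n$ separately to get subsets $G_n$ of size $\leq\mf$, and then pulls each $g\in G_n$ back to a witnessing $A_g\in\ac$, taking $\gc=\bigcup_n\{A_g:g\in G_n\}$ as the small $\pi$-network. The step $\underline{0}\in\overline{F_n}$ is exactly where the Tychonoff hypothesis and the compact base are used: one needs a Urysohn-type function that vanishes on a compact $B\in\bk_0$ and equals $1$ on the disjoint closed set $X\setminus U_n(A)$. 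Your encoding $g_A=\max_{f\in A}|f|$ bypasses all of this: the equivalence $A\subseteq[B,\varepsilon](\underline{0})\iff g_A\in[B,\varepsilon](\underline{0})$ converts the $\pi$-network property directly into a closure statement, requires only one application of tightness instead of countably many, produces no auxiliary functions, and---as you correctly observe---does not use the Tychonoff or compact-base hypotheses at all, only that the sets $[B,\varepsilon](\underline{0})$ form a neighbourhood base and that $C_{\bk}(X)$ is homogeneous. So your argument is both more elementary and strictly more general (it proves the theorem for an arbitrary bornology on an arbitrary space), whereas the paper's version keeps the proof aligned with the Lemma~\ref{L1}/\ref{L2} machinery used throughout that section. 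The one point worth making explicit in your write-up is that members of a $\pi$-network are by definition nonempty, so $g_A$ is well defined; you noted this in passing and it is the only place finiteness-plus-nonemptiness of $A$ is needed for $g_A$ to exist and be continuous.
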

\begin{proof}
The inequality $st(C_{\bk}(X))\geq t(C_{\bk}(X))$ is straightforward and so we prove only $st(C_{\bk}(X))\leq t(C_{\bk}(X))$. Suppose that $t(C_{\bk}(X))=\mf$ and $\ac$ is a $\pi$-network at $\underline{0}$ consisting of finite subsets of $C_{\bk}(X)$.  Define $U_n(A)=\cap_{f\in A}f^{-1}(-\frac{1}{n},\frac{1}{n})$ for each $A\in \ac$ and $n\in \omega$. Take $F_n=\{f\in C_{\bk}(X): f(X\setminus U_n(A))=\{1\}, A\in \ac\}$. It is easy to see that $\underline{0}\in \overline{F_n}$ for each $n\in \omega$. Since $t(C_{\bk}(X))=\mf$, choose $G_n\subseteq F_n$ for each $n$ with $|G_n|\leq\mf$ for which $\underline{0}\in \overline{G_n}$. Now for each $g\in G_n$ we have a set $A_g$ in $\ac$ with $g(X\setminus U_n(A_g))=\{1\}$. Take the collection $\ac_n=\{A_g\in \ac:g\in G_n\}$. As $|G_n|\leq\mf$, $|\ac_n|\leq\mf$ for each $n$. Let $\gc=\cup_{n\in \omega}\ac_n$. We prove that $\gc$ is a $\pi$-network at $\underline{0}$. For this take a neighbourhood $[B,\varepsilon](\underline{0})$ of $\underline{0}$, where $B\in \bk$ and $\varepsilon>0$. Choose a $n\in \omega$ with $\frac{1}{n}<\varepsilon$. Since $\underline{0}\in \overline{G_n}$. We have a $g\in [B,\frac{1}{n}](\underline{0})\cap G_n$. Consequently, $B\subseteq g^{-1}(-\frac{1}{n},\frac{1}{n})$ and $g(X\setminus U_n(A_g))=\{1\}$. Therefore $B \subseteq U_n(A_g)$ and so $B\subseteq \cap_{f\in A_g}f^{-1}(-\frac{1}{n},\frac{1}{n})$. Clearly, $f\in [B,\frac{1}{n}](\underline{0})$ for all $f\in A_g$ and $A_g\subseteq [B,\varepsilon](\underline{0})$. This shows that $\gc$ is a $\pi$-network at $\underline{0}$ of cardinality less than or equal to $\mf$. Hence $st(C_{\bk}(X))\leq t(C_{\bk}(X))$.
\end{proof}

\begin{Cor}
\label{Ct1}
Let $\bk$ be a bornology on a Tychonoff space $X$ with a compact base. The following assertions are equivalent.\\
\noindent$(1)$ $C_{\bk}(X)$  has countable tightness. \\
\noindent$(2)$ $C_{\bk}(X)$  has countable supertightness.
\end{Cor}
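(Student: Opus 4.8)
The plan is to derive this equivalence as an immediate consequence of Theorem~\ref{Tf9}, which is the genuine substance behind the corollary. Recalling the definitions from Section~2, the statement ``$C_{\bk}(X)$ has countable tightness'' means precisely $t(C_{\bk}(X))=\omega$, and ``$C_{\bk}(X)$ has countable supertightness'' means precisely $st(C_{\bk}(X))=\omega$. Thus the task reduces to showing that one of these cardinal equalities holds if and only if the other does.

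First I would invoke Theorem~\ref{Tf9}, which asserts that $st(C_{\bk}(X))=t(C_{\bk}(X))$ under the standing hypothesis that $\bk$ has a compact base on the Tychonoff space $X$. Given this identity of cardinal invariants, the two conditions $t(C_{\bk}(X))=\omega$ and $st(C_{\bk}(X))=\omega$ are literally the same numerical statement, so $(1)\Leftrightarrow(2)$ follows at once. Concretely, if $C_{\bk}(X)$ has countable tightness then $t(C_{\bk}(X))=\omega$, whence $st(C_{\bk}(X))=t(C_{\bk}(X))=\omega$ and $C_{\bk}(X)$ has countable supertightness; the reverse implication is symmetric.

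There is essentially no obstacle here, since all the work has already been carried out in the proof of Theorem~\ref{Tf9} (where the inequality $st(C_{\bk}(X))\leq t(C_{\bk}(X))$ was established by passing from a $\pi$-network of finite sets to the associated sets $F_n=\{f : f(X\setminus U_n(A))=\{1\},\ A\in\ac\}$, extracting small $G_n\subseteq F_n$ via countable tightness, and reassembling a $\pi$-network of the same cardinality). The only point to state cleanly is that both named closure properties are special cases ($\mf=\omega$) of the corresponding cardinal functions, so that Theorem~\ref{Tf9} specializes directly. Accordingly, the proof is a one-line deduction and I would simply write:

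\begin{proof}
By definition, $C_{\bk}(X)$ has countable tightness if and only if $t(C_{\bk}(X))=\omega$, and $C_{\bk}(X)$ has countable supertightness if and only if $st(C_{\bk}(X))=\omega$. By Theorem~\ref{Tf9} we have $st(C_{\bk}(X))=t(C_{\bk}(X))$. Hence $t(C_{\bk}(X))=\omega$ holds if and only if $st(C_{\bk}(X))=\omega$, which establishes the equivalence of $(1)$ and $(2)$.
\end{proof}
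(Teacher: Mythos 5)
Your proposal is correct and matches the paper's intent exactly: the corollary is stated immediately after Theorem~\ref{Tf9} precisely because it is the case $\mf=\omega$ of the identity $st(C_{\bk}(X))=t(C_{\bk}(X))$, which is all your one-line deduction uses.
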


\begin{Th}(\cite[Theorem 2.1]{sakai-st})
\label{Tf10}
For a Tychonoff space $X$ $l(C_p(X))\geq st(X^n)$ for each $n\in \omega$.
\end{Th}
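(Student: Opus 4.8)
The plan is to exploit the standard duality between finite $\pi$-networks in powers of $X$ and open covers of a power of $C_p(X)$, turning the given $\pi$-network into an open cover to which the Lindel\"of property can be applied. Since $l(C_p(X))\geq \omega$ and $st(X^n)=\omega\cdot\sup\{st(\bar x,X^n):\bar x\in X^n\}$, it suffices to fix $n\in\omega$, a point $\bar x=(x_1,\dots,x_n)\in X^n$, and a $\pi$-network $\mathcal C$ at $\bar x$ consisting of finite subsets of $X^n$, and then to extract from $\mathcal C$ a subfamily of cardinality at most $l(C_p(X))$ that is still a $\pi$-network at $\bar x$. Throughout I would use that the basic neighbourhoods of $\bar x$ are boxes $W_1\times\cdots\times W_n$ with $x_i\in W_i$ open, and that a finite $F\subseteq X^n$ satisfies $F\subseteq W_1\times\cdots\times W_n$ exactly when each coordinate projection $F_i=\{y_i:(y_1,\dots,y_n)\in F\}$ is contained in $W_i$; this splits the box condition into $n$ independent conditions on finite subsets of $X$.

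First I would pass to the function space $C_p(X)^n\cong C_p(X,\mathbb{R}^n)$ and consider the subspace $L=\{(g_1,\dots,g_n)\in C_p(X)^n:g_i(x_i)=0\text{ for all }i\leq n\}$, which is closed, being an intersection of preimages of points under the evaluations $g\mapsto g_i(x_i)$; hence $l(L)\leq l(C_p(X)^n)$. For each $F\in\mathcal C$ I would set $O_F=\{(g_1,\dots,g_n)\in L:|g_i(y)|<1\text{ for all }i\leq n\text{ and all }y\in F_i\}$, a relatively open subset of $L$ (a finite intersection of preimages of open sets under evaluations). The family $\{O_F:F\in\mathcal C\}$ then covers $L$: given $g\in L$, the sets $W_i=g_i^{-1}((-1,1))$ are open neighbourhoods of $x_i$, so by the $\pi$-network hypothesis there is $F\in\mathcal C$ with $F_i\subseteq W_i$ for every $i$, which is precisely $g\in O_F$.

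Next I would apply the Lindel\"of property of the closed subspace $L$ to this open cover, obtaining $\mathcal D\subseteq\mathcal C$ with $|\mathcal D|\leq l(L)$ for which $\{O_F:F\in\mathcal D\}$ still covers $L$. To verify that $\mathcal D$ is a $\pi$-network at $\bar x$, I would take an arbitrary box neighbourhood $W_1\times\cdots\times W_n$ of $\bar x$ and, using complete regularity of $X$, choose $g_i\in C(X)$ with $g_i(x_i)=0$, $0\leq g_i\leq 1$, and $g_i\equiv 1$ on $X\setminus W_i$. Then $g=(g_1,\dots,g_n)\in L$, so some $F\in\mathcal D$ satisfies $g\in O_F$; this forces $F_i\subseteq g_i^{-1}([0,1))\subseteq W_i$ for each $i$, hence $F\subseteq W_1\times\cdots\times W_n$. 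Thus $\mathcal D$ is the required small $\pi$-network, yielding $st(\bar x,X^n)\leq l(L)\leq l(C_p(X)^n)$, and, after the reduction in the first paragraph, $st(X^n)\leq l(C_p(X)^n)$.

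The main obstacle is the final reconciliation of this bound with the statement. Controlling the $n$ coordinate neighbourhoods $W_1,\dots,W_n$ independently is exactly what forces vector-valued functions: a single scalar function cannot realise $n$ separated target behaviours at once, since continuity together with the intermediate value property makes the transition region of one coordinate contaminate the sublevel sets governing another. Consequently the argument above naturally bounds $st(X^n)$ by $l(C_p(X)^n)$ rather than directly by $l(C_p(X))$, and the crux is the power-invariance $l(C_p(X)^n)=l(C_p(X))$, equivalently $l\big(C_p(\bigsqcup_{i\leq n}X)\big)=l(C_p(X))$; once this is available the theorem follows. I would note that for $n=1$ no product is needed at all -- one works with the closed hyperplane $\{g\in C_p(X):g(x)=0\}$ of $C_p(X)$ itself -- so the entire content of the general case is this invariance of the Lindel\"of number of $C_p(X)$ under finite powers.
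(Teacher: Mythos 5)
Your duality argument is sound as far as it goes: the sets $O_F$ are indeed open in the closed subspace $L\subseteq C_p(X)^n$, they cover $L$, and a subcover of size $l(L)$ does yield a small $\pi$-network at $\bar x$, so you have correctly proved $st(X^n)\leq l(C_p(X)^n)$. The genuine gap is exactly the step you defer to the end: the ``power-invariance'' $l(C_p(X)^n)=l(C_p(X))$ is not an available fact. Since $C_p(X)^n\cong C_p(X\oplus\cdots\oplus X)$, what you need is precisely Arhangel'ski\v{i}'s well-known problem on whether the Lindel\"{o}f property of $C_p(X)$ passes even to its square; this is a famous unresolved question in $C_p$-theory (and consistently there are pairs of compact spaces whose function spaces are Lindel\"{o}f while their product is not), so it certainly cannot be invoked as a lemma. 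Your reduction thus replaces an elementary ZFC theorem by a statement far deeper than the theorem itself, and no routine patch completes the proof along this route.

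The idea you are missing --- the one in Sakai's proof, which the paper adapts in its bornological version (Theorem \ref{Tf11}) --- is a disjointness trick that makes a \emph{single scalar} function suffice, so that the Lindel\"{o}f property is only ever applied inside $C_p(X)$ itself. Choose open $V_i\ni x_i$ with $V_i\cap V_j=\emptyset$ when $x_i\neq x_j$ and $V_i=V_j$ when $x_i=x_j$, and note that one may assume every member of the $\pi$-network lies in $V_1\times\cdots\times V_n$ (those members still form a $\pi$-network at $\bar x$). Work in the closed subspace $C_p(X;F)=\{f\in C_p(X): f(x_i)=0,\ i\leq n\}$, associate to each member $A$ the finite set $B_A$ of all coordinates of points of $A$, and cover $C_p(X;F)$ by the open sets $\{f: |f(y)|<1 \text{ for all } y\in B_A\}$; Lindel\"{o}fness gives a subfamily $\mathcal{D}$ of size at most $l(C_p(X))$. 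Now, given $W_i\subseteq V_i$ (with $W_i=W_j$ when $x_i=x_j$), take one function $f$ with $f(x_i)=0$ for all $i$ and $f\equiv 1$ off $W_1\cup\cdots\cup W_n$; if $|f|<1$ on $B_A$, then $B_A\subseteq W_1\cup\cdots\cup W_n$, and the coordinate ``contamination'' you worried about cannot occur: a coordinate $y_k$ of a point of $A$ lies in $V_k\cap(W_1\cup\cdots\cup W_n)=W_k$, since $W_j\subseteq V_j$ is disjoint from $V_k$ unless $x_j=x_k$ (in which case $W_j=W_k$). Hence $A\subseteq W_1\times\cdots\times W_n$, so $\mathcal{D}$ is a $\pi$-network at $\bar x$, and only $l(C_p(X))$ has been used.
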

In $C_{\bk}(X)$ we establish a similar result. The proof follows essentially the same line of arguments as that of \cite[Theorem 2.1]{sakai-st} with certain necessary modifications. For the sake of completeness we present the detailed proof.

\begin{Th}
\label{Tf11}
Let $\bk$ be a bornology on a Tychonoff space $X$ with a compact base. Then $l(C_{\bk}(X))\geq st(X^n)$ for each $n\in\omega$.
\end{Th}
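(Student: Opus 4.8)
The plan is to mirror the proof of \cite[Theorem 2.1]{sakai-st}, dualising a $\pi$-network of finite subsets of $X^n$ against an open cover of a closed subspace of a finite power of $C_{\bk}(X)$. Since $st(X^n)=\omega\cdot\sup\{st(\bar x,X^n):\bar x\in X^n\}$ and $l(C_{\bk}(X))\geq\omega$, it suffices to fix $\bar x=(x_1,\dots,x_n)\in X^n$ and prove $st(\bar x,X^n)\leq\mf$, where $\mf:=l(C_{\bk}(X))$. So let $\cc$ be a $\pi$-network at $\bar x$ consisting of finite subsets of $X^n$. For each $i\leq n$ put $S_i=\{f\in C_{\bk}(X):0\leq f\leq 1,\ f(x_i)=0\}$; since $\fc\subseteq\bk$ we have $\tau_{\bk}\supseteq\tau_p$, so every evaluation is $\tau_{\bk}$-continuous and each $S_i$ is closed in $C_{\bk}(X)$. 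Let $P=\prod_{i=1}^n S_i$, a closed subspace of $C_{\bk}(X)^n$. To each $\vec f=(f_1,\dots,f_n)\in P$ I associate the open neighbourhood $W_{\vec f}=\prod_{i=1}^n\{z\in X:f_i(z)<1\}$ of $\bar x$, and to each $F\in\cc$ the set $O_F=\{\vec f\in P: F\subseteq W_{\vec f}\}=\{\vec f\in P: f_i(z_i)<1 \text{ for all }(z_1,\dots,z_n)\in F \text{ and } i\leq n\}$, which is open in $P$ because $F$ is finite and the evaluations are continuous.

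First I would verify the two directions of the correspondence. For the covering property, given any $\vec f\in P$ the set $W_{\vec f}$ is a neighbourhood of $\bar x$ (each $f_i(x_i)=0<1$), so since $\cc$ is a $\pi$-network there is $F\in\cc$ with $F\subseteq W_{\vec f}$, i.e. $\vec f\in O_F$; thus $\{O_F:F\in\cc\}$ covers $P$. Conversely, if $\dc\subseteq\cc$ and $\{O_F:F\in\dc\}$ still covers $P$, then $\dc$ is a $\pi$-network at $\bar x$: given an arbitrary neighbourhood $V$ of $\bar x$, choose a basic neighbourhood $\prod_{i\leq n}V_i\subseteq V$ with $x_i\in V_i$, and use complete regularity of the Tychonoff space $X$ to pick $f_i\in S_i$ with $f_i(x_i)=0$ and $f_i(X\setminus V_i)=\{1\}$; then $\vec f=(f_1,\dots,f_n)\in P$ and $W_{\vec f}\subseteq\prod_{i\leq n}V_i\subseteq V$, so covering gives some $F\in\dc$ with $F\subseteq W_{\vec f}\subseteq V$. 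These steps are the bornological counterparts of Sakai's, the only changes being the use of the $\tau_{\bk}$-neighbourhoods $\langle B,\varepsilon\rangle[\underline 0]$ and of the compact base when passing between open $\bk$-covers and closed ones (cf. Lemmas~\ref{L1} and \ref{L2}).

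It then remains to extract a small subcover. Because $P$ is closed in $C_{\bk}(X)^n$ we have $l(P)\leq l(C_{\bk}(X)^n)$, so applying the Lindel\"of number to the open cover $\{O_F:F\in\cc\}$ of $P$ yields $\dc\subseteq\cc$ with $|\dc|\leq l(C_{\bk}(X)^n)$ that still covers $P$; by the previous paragraph $\dc$ is then a $\pi$-network at $\bar x$. Hence $st(\bar x,X^n)\leq l(C_{\bk}(X)^n)$, and since $\bar x$ was arbitrary $st(X^n)\leq l(C_{\bk}(X)^n)$.

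The main obstacle is therefore to show $l(C_{\bk}(X)^n)=l(C_{\bk}(X))$, which is exactly the point where Sakai's argument requires a genuine modification (in the $C_p$ setting it rests on the finite-power invariance of the Lindel\"of number of function spaces). I would obtain it from the natural homeomorphism $C_{\bk}(X)^n\cong C_{\hat{\bk}}(\hat X)$, where $\hat X$ is the topological sum of $n$ copies of $X$ and $\hat{\bk}$ is the bornology whose base consists of the unions of the copies of the members of a compact base of $\bk$ (so $\hat{\bk}$ again has a compact base), together with the fact that the Lindel\"of number of such a function space is not increased by replacing the underlying space with a finite topological sum of copies of itself. Feeding $l(C_{\bk}(X)^n)=l(C_{\bk}(X))=\mf$ into the previous paragraph gives $st(X^n)\leq\mf$, that is $l(C_{\bk}(X))\geq st(X^n)$ for every $n\in\omega$, as required.
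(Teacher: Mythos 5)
Your construction of the dual cover is sound, but the proof collapses at exactly the point you flag as ``the main obstacle'': the assertion $l(C_{\bk}(X)^n)=l(C_{\bk}(X))$ is not a known fact, and your justification for it is circular. Rewriting $C_{\bk}(X)^n$ as $C_{\hat{\bk}}(\hat X)$ for the topological sum $\hat X$ of $n$ copies of $X$ is a correct homeomorphism, but the claim that ``the Lindel\"{o}f number of such a function space is not increased by replacing the underlying space with a finite topological sum of copies of itself'' is precisely the statement $l(C_{\bk}(X)^n)\leq l(C_{\bk}(X))$ in disguise; nothing has been proved. Already in the special case $\bk=\fc$, where $C_{\bk}(X)=C_p(X)$, preservation of the Lindel\"{o}f property under finite powers of $C_p(X)$ is a famous problem of Arhangel'ski\v{i} and is known to fail consistently (Okunev--Tamano constructed, under CH, a space $X$ with $C_p(X)$ Lindel\"{o}f but $C_p(X)^2$ not Lindel\"{o}f). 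You also misremember Sakai's Theorem 2.1: his argument does \emph{not} rest on finite-power invariance of $l$ for function spaces --- it deliberately avoids powers of the function space, and that avoidance is the whole point of the proof.

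The paper's proof (following Sakai) stays inside a single copy of $C_{\bk}(X)$. Instead of your tuple space $P=\prod_{i\leq n}S_i\subseteq C_{\bk}(X)^n$, it uses the closed subspace $C_{\bk}(X;F)=\{f\in C_{\bk}(X):f(F)=\{0\}\}$ where $F=\{x_1,\dots,x_n\}$, so that $l(C_{\bk}(X;F))\leq l(C_{\bk}(X))$ holds trivially by closed monotonicity, with no product involved. The duality is then run with single functions: to each finite $A\subseteq X^n$ in the $\pi$-network one assigns the finite set $B_A\in\bk$ of all coordinates of points of $A$, and the open set $[B_A,1](\underline{0})\cap C_{\bk}(X;F)$; one checks $C_{\bk}(X;F)\subseteq\bigcup_{A}[B_A,1](\underline{0})$ and extracts a subcover of size $\leq\mf$. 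The price of using one function $f$ (whose associated neighbourhood of $(x_1,\dots,x_n)$ is the cube $(f^{-1}(-1,1))^n$ rather than an arbitrary product) is paid by the preliminary normalization with pairwise disjoint neighbourhoods $V_i$ of the $x_i$: given a target neighbourhood $W_1\times\cdots\times W_n$ with $W_i\subseteq V_i$, one takes $f\in C_{\bk}(X;F)$ with $f=1$ off $W_1\cup\cdots\cup W_n$, and the disjointness of the $V_i$ forces every point of the selected $A\subseteq(f^{-1}(-1,1))^n\cap(V_1\times\cdots\times V_n)$ to lie in $W_1\times\cdots\times W_n$. If you replace your final paragraph by this single-function device (your first two paragraphs can be adapted to it), the argument goes through without ever needing the Lindel\"{o}f number of a power of $C_{\bk}(X)$.
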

\begin{proof}
Let $l(C_{\bk}(X))=\mf$. Fix a $n\in \omega$ and $(x_1,\dots,x_n)\in X^n$. Let $\ac$ be a $\pi$-network at $(x_1,\dots,x_n)$ consisting of finite subsets of $X^n$. We show that there exists a subfamily $\dc$ of $\ac$ with $|\dc|\leq\mf$ which is a $\pi$-network at $(x_1,\dots,x_n)$. Choose a neighbourhood $V_i$ of $x_i$ for each $i=1,\dots,n$ such that $V_i\cap V_j=\emptyset$ when $x_i\neq x_j$ and $V_i=V_j$ when $x_i=x_j$. We assume that $A\subseteq V_1\times\cdots\times V_n$ for all $A\in \ac$. Let $F=\{x_1,\dots,x_n\}$. Consider $C_{\bk}(X;F)=\{f\in C_{\bk}(X):f(F)=\{0\}\}$. Clearly, $C_{\bk}(X;F)$ is closed and so $l(C_{\bk}(X;F))\leq \mf$. We choose a finite set $B_A\in \bk$ for each $A\in \ac$ such that $A=\{(y_1,\dots,y_n):y_1,\dots, y_n\in B_A\}$ ($B_A$ contains only those elements which appear in the representation of the elements of $A$). We prove that $C_{\bk}(X;F)\subseteq \cup_{A\in \ac}[B_A,1](\underline{0})$. Take a $f\in C_{\bk}(X;F)$. As $f(x_i)=0$, $i=1,\dots,n$, $f^{-1}(-1,1)\times \cdots \times f^{-1}(-1,1)$ is a neighbourhood of $(x_1,\dots,x_n)$. Choose an $A\in \ac$ for which $A\subseteq f^{-1}(-1,1)\times \cdots \times f^{-1}(-1,1)$. That is, $B_A\subseteq  f^{-1}(-1,1)$ and so $f\in [B_A,1](\underline{0})$. This proves that $C_{\bk}(X;F)\subseteq \cup_{A\in \ac}[B_A,1](\underline{0})$. Therefore there exists a subfamily $\dc$ of $\ac$ with $|\dc|\leq\mf$ and $C_{\bk}(X;F)\subseteq \cup_{A\in \dc}[B_A,1](\underline{0})$. We claim that $\dc$ is a $\pi$-network at $(x_1,\dots,x_n)$. Let $W_i$ be an open neighbourhood of $x_i$, $i=1,\dots,n$. We assume that $W_i\subseteq V_i$ and $W_i=W_j$ when $x_i=x_j$. Choose a $f\in  C_{\bk}(X)$ with $f(A)=\{0\}$ and $f(X\setminus W_1\cup\cdots \cup W_n)=\{1\}$. Clearly, $f\in C_{\bk}(X;F)$. Now there exists a $A\in \ac$ with $f\in [B_A,1](\underline{0})$. Consequently, $B_A\subseteq f^{-1}(-1,1)$ and so $B_A\subseteq W_1\cup\cdots W_n$. Now for each $(y_1,\dots, y_n)\in A$ we obtain that $(y_1,\dots,y_n)\in W_{i_1}\times \cdots \times W_{i_n}\cap V_1\times\cdots\times V_n$. Clearly, $W_{i_k}\cap V_k\neq \emptyset$ for each $k=1,\dots,n$. As $W_k\subset V_k$ and $W_k$'s as well as $V_k$'s are disjoint, we have $W_{i_k}=W_k$ for each $k=1,\dots,n$. Therefore $A\subseteq W_1\times\cdots \times W_n$. Hence $\dc$ is a $\pi$-network at $(x_1,\dots,x_n)$ and so $l(C_{\bk}(X))\geq st(X^n)$.
\end{proof}

\begin{Cor}
\label{Ct2}
Let $\bk$ be a bornology on a Tychonoff space $X$ with a compact base. If $C_{\bk}(X)$ is Lindel\"{o}f, then $X^n$ has countable supertightness for all $n\in \omega$.
\end{Cor}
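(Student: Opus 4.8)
The plan is to derive this directly from Theorem~\ref{Tf11}, which asserts that $l(C_{\bk}(X))\geq st(X^n)$ for each $n\in\omega$. First I would translate the hypothesis: saying that $C_{\bk}(X)$ is Lindel\"{o}f means that every open cover of $C_{\bk}(X)$ admits a countable subcover, which by the definition of the Lindel\"{o}f number recalled in Section~2 is precisely the statement $l(C_{\bk}(X))=\omega$.

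Next, I would fix an arbitrary $n\in\omega$ and substitute this value into the inequality supplied by Theorem~\ref{Tf11}, obtaining $st(X^n)\leq l(C_{\bk}(X))=\omega$. The only point deserving a word of care is that the conclusion we want is the \emph{equality} $st(X^n)=\omega$ (countable supertightness), not merely the bound $st(X^n)\leq\omega$. This is immediate from the definition $st(X^n)=\omega\cdot\sup\{st(x,X^n):x\in X^n\}$, which forces $st(X^n)\geq\omega$; combining this lower bound with the upper bound just derived yields $st(X^n)=\omega$.

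Since $n\in\omega$ was arbitrary, $X^n$ has countable supertightness for every $n\in\omega$, as claimed. I do not expect any genuine obstacle here: all of the substance is carried by Theorem~\ref{Tf11}, and the corollary is just its specialization to the case where the Lindel\"{o}f number attains its minimal value $\omega$.
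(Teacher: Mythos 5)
Your proposal is correct and is exactly the argument the paper intends: Corollary~\ref{Ct2} is stated as an immediate consequence of Theorem~\ref{Tf11}, obtained by noting that Lindel\"{o}fness gives $l(C_{\bk}(X))=\omega$, so $st(X^n)\leq\omega$, while the definition $st(X^n)=\omega\cdot\sup\{st(x,X^n):x\in X^n\}$ already forces $st(X^n)\geq\omega$. Your extra remark about upgrading the inequality to an equality is a fair point of care, and the paper leaves it implicit.
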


\end{document}